\documentclass[11pt]{article}
\usepackage{amssymb,amsthm,amsmath}
\usepackage[dvipdfm,colorlinks,breaklinks,
pdfstartview={FitH -32768},
bookmarks,bookmarksnumbered,bookmarksopen,
pdftitle={Homology and topological full groups of 
etale groupoids on totally disconnected spaces},
pdfkeywords={2010 Mathematics Subject Classification: 
37B05, 22A22, 46L80, 19D55},
pdfauthor={Hiroki Matui}]{hyperref}

\setlength{\topmargin}{-0.5cm}
\setlength{\textheight}{22cm}
\setlength{\evensidemargin}{0.5cm}
\setlength{\oddsidemargin}{0.5cm}
\setlength{\textwidth}{15cm}

\newtheorem{thm}{Theorem}[section]
\newtheorem{lem}[thm]{Lemma}
\newtheorem{prop}[thm]{Proposition}
\newtheorem{cor}[thm]{Corollary}

\theoremstyle{definition}
\newtheorem{df}[thm]{Definition}
\newtheorem{rem}[thm]{Remark}

\renewcommand{\phi}{\varphi}

\renewcommand{\ker}{\operatorname{Ker}}

\newcommand{\Aut}{\operatorname{Aut}}
\newcommand{\Inn}{\operatorname{Inn}}
\newcommand{\Out}{\operatorname{Out}}
\newcommand{\Ad}{\operatorname{Ad}}
\newcommand{\Ima}{\operatorname{Im}}
\newcommand{\id}{\operatorname{id}}
\newcommand{\Homeo}{\operatorname{Homeo}}
\newcommand{\Coker}{\operatorname{Coker}}
\newcommand{\supp}{\operatorname{supp}}

\newcommand{\ep}{\varepsilon}

\newcommand{\N}{\mathbb{N}}
\newcommand{\Z}{\mathbb{Z}}
\newcommand{\R}{\mathbb{R}}
\newcommand{\T}{\mathbb{T}}

\title{Homology and topological full groups of \\
\'etale groupoids on totally disconnected spaces
\thanks{2010 Mathematics Subject Classification: 
37B05, 22A22, 46L80, 19D55}}
\author{Hiroki Matui \\
Graduate School of Science \\
Chiba University \\
Inage-ku, Chiba 263-8522, Japan}
\date{}

\begin{document}
\maketitle

\begin{abstract}
For almost finite groupoids, 
we study how their homology groups reflect 
dynamical properties of their topological full groups. 
It is shown that 
two clopen subsets of the unit space has the same class in $H_0$ 
if and only if there exists an element in the topological full group 
which maps one to the other. 
It is also shown that a natural homomorphism, called the index map, 
from the topological full group to $H_1$ is surjective and 
any element of the kernel can be written 
as a product of four elements of finite order. 
In particular, the index map induces a homomorphism 
from $H_1$ to $K_1$ of the groupoid $C^*$-algebra. 
Explicit computations of homology groups of AF groupoids 
and \'etale groupoids arising from subshifts of finite type are also given. 
\end{abstract}

%%%%%%%%%%%%%%%%%%%%%%%%%%%%%%%%%%%%%%%%%%%%%%%%%%%%%%%%%%%%
\section{Introduction}

\'Etale groupoids play an important role 
in the theory of both topological dynamics and operator algebras. 
Among other things, 
their (co)homology theory and $K$-theory of the associated $C^*$-algebras 
have attracted significant interest. 
This paper analyses 
how the homology groups $H_*(G)$ reflect 
dynamical properties of the topological full group $[[G]]$ 
when the groupoid $G$ has 
a compact and totally disconnected unit space $G^{(0)}$. 
The topological full group $[[G]]$ consists of 
all homeomorphisms on $G^{(0)}$ 
whose graph is `contained' in the groupoid $G$ as an open subset 
(Definition \ref{tfg}). 
It corresponds to a natural quotient 
of the group of unitary normalizers of $C(G^{(0)})$ in $C^*_r(G)$. 
(Proposition \ref{exact}). 
With this correspondence, 
we also discuss connections between homology theory and $K$-theory of 
totally disconnected \'etale groupoids. 

The AF groupoids (\cite{R1,Kr,GPS3}) form 
one of the most important classes of 
\'etale groupoids on totally disconnected spaces and 
have already been classified completely up to isomorphism. 
The terminology AF comes from $C^*$-algebra theory and 
means approximately finite. 
In the present paper, 
we introduce a class of `AF-like' groupoids, 
namely almost finite groupoids (Definition \ref{almstfnt}). 
Roughly speaking, 
a totally disconnected \'etale groupoid $G$ is said to be almost finite 
if any compact subset of $G$ is almost contained in an elementary subgroupoid. 
Clearly AF groupoids are almost finite. 
Any transformation groupoid arising from a free action of $\Z^N$ is 
shown to be almost finite (Lemma \ref{ZNisAF}), 
but it is not known whether the same holds for other discrete amenable groups. 
For any almost finite groupoid $G$ we first show that 
two $G$-full clopen subsets of $G^{(0)}$ has the same class in $H_0(G)$ 
if and only if one is mapped to the other by an element in $[[G]]$ 
(Theorem \ref{Hopf2}). 
The latter condition is equivalent to saying that 
the characteristic functions on the clopen sets are unitarily equivalent 
in $C^*_r(G)$ via a unitary normalizer of $C(G^{(0)})$. 
Next, we introduce a group homomorphism from $[[G]]$ to $H_1(G)$ 
and call it the index map. 
When $G$ is almost finite, 
the index map is shown to be surjective (Theorem \ref{surjective}). 
Furthermore, we prove that 
any element of the kernel of the index map is 
a product of four elements of finite order (Theorem \ref{injective}). 
In particular, if $G$ is principal, then 
$H_1(G)$ is isomorphic to $[[G]]/[[G]]_0$, 
where $[[G]]_0$ is the subgroup generated by elements of finite order. 

This paper is organized as follows. 
In Section 2 
we collect notation, definitions and basic facts on \'etale groupoids. 
In Section 3 
we recall the homology theory of \'etale groupoids, 
which was introduced by M. Crainic and I. Moerdijk \cite{CM}. 
We observe that 
homologically similar \'etale groupoids have 
isomorphic homology groups with constant coefficients 
(Proposition \ref{homosim}). 
A variant of the Lindon-Hochschild-Serre spectral sequence is also given. 
In Section 4 
we introduce the notion of Kakutani equivalence 
for \'etale groupoids with compact and totally disconnected unit spaces 
and prove its elementary properties. 
Kakutani equivalent groupoids are shown to be homologically similar 
(Theorem \ref{Kakutani>sim}). 
With the aid of the results of Section 3 and 4, 
we compute the homology groups of the AF groupoids 
(Theorem \ref{AFHomology1}, \ref{AFHomology2}) and 
the \'etale groupoids arising from subshifts of finite type 
(Theorem \ref{CKHomology}). 
Note that the homology groups agree with 
the $K$-groups of the associated groupoid $C^*$-algebras 
for these groupoids. 
In Section 5 
we give a $C^*$-algebraic characterization of Kakutani equivalence 
(Theorem \ref{C*Kakutani}) 
by using a result of J. Renault \cite{R3}. 
Next, we study relationship 
between the topological full group $[[G]]$ and 
the unitary normalizers of $C(G^{(0)})$ in $C^*_r(G)$ 
and show a short exact sequence for them (Proposition \ref{exact}). 
We also study the group of automorphisms of $C^*_r(G)$ 
preserving $C(G^{(0)})$ globally (Proposition \ref{exact2}). 
In Section 6 
the definition of almost finite groupoids is given 
(Definition \ref{almstfnt}), 
and some basic properties are proved. 
Transformation groupoids arising from free $\Z^N$-actions are 
shown to be almost finite (Lemma \ref{ZNisAF}). 
We also prove that $H_0$ of any minimal and almost finite groupoid is 
a simple, weakly unperforated, ordered abelian group 
with the Riesz interpolation property (Proposition \ref{Riesz}). 
The main result of this section is Theorem \ref{Hopf2}, 
which says that 
two clopen subsets of the unit space with the same image in $H_0$ 
are mapped to each other by an element of the topological full group 
when the groupoid is almost finite. 
In Section 7 
we investigate the index map $I:[[G]]\to H_1(G)$. 
For an almost finite groupoid $G$, 
Theorem \ref{surjective} states that $I$ is surjective and 
Theorem \ref{injective} determines the kernel of $I$. 
As a result, 
the existence of a natural homomorphism $\Phi_1$ 
from $H_1(G)$ to $K_1(C^*_r(G))$ is shown (Corollary \ref{Phi1}).

%%%%%%%%%%%%%%%%%%%%%%%%%%%%%%%%%%%%%%%%%%%%%%%%%%%%%%%%%%%%
\section{Preliminaries}

The cardinality of a set $A$ is written by $\lvert A\rvert$ and 
the characteristic function on $A$ is written by $1_A$. 
We say that a subset of a topological space is clopen 
if it is both closed and open. 
A topological space is said to be totally disconnected 
if its topology is generated by clopen subsets. 
By a Cantor set, 
we mean a compact, metrizable, totally disconnected space 
with no isolated points. 
It is known that any two such spaces are homeomorphic. 

We say that a continuous map $f:X\to Y$ is \'etale, 
if it is a local homeomorphism, 
i.e. each $x\in X$ has an open neighborhood $U$ such that 
$f(U)$ is open in $Y$ and $f|U$ is a homeomorphism from $U$ to $f(U)$. 
In this article, by an \'etale groupoid 
we mean a locally compact Hausdorff groupoid 
such that the range map is \'etale. 
We refer the reader to \cite{R1,R3} 
for the basic theory of \'etale groupoids. 
For an \'etale groupoid $G$, 
we let $G^{(0)}$ denote the unit space and 
let $s$ and $r$ denote the source and range maps. 
A subset $F\subset G^{(0)}$ is said to be $G$-full, 
if $r^{-1}(x)\cap s^{-1}(F)$ is not empty for any $x\in G^{(0)}$. 
For $x\in G^{(0)}$, 
$G(x)=r(Gx)$ is called the $G$-orbit of $x$. 
When every $G$-orbit is dense in $G^{(0)}$, 
$G$ is said to be minimal. 
For an open subset $F\subset G^{(0)}$, 
the reduction of $G$ to $F$ is $r^{-1}(F)\cap s^{-1}(F)$ and 
denoted by $G|F$. 
The reduction $G|F$ is an \'etale subgroupoid of $G$ in an obvious way. 
A subset $U\subset G$ is called a $G$-set, if $r|U,s|U$ are injective. 
For an open $G$-set $U$, 
we let $\tau_U$ denote the homeomorphism $r\circ(s|U)^{-1}$ 
from $s(U)$ to $r(U)$. 
The isotropy bundle is $G'=\{g\in G\mid r(g)=s(g)\}$. 
We say that $G$ is principal, if $G'=G^{(0)}$. 
A principal \'etale groupoid $G$ can be identified with 
$\{(r(g),s(g))\in G^{(0)}\times G^{(0)}\mid g\in G\}$, 
which is an equivalence relation on $G^{(0)}$. 
Such an equivalence relation is called an \'etale equivalence relation 
(see \cite[Definition 2.1]{GPS2}). 
When the interior of $G'$ is $G^{(0)}$, 
we say that $G$ is essentially principal 
(this is slightly different from Definition II.4.3 of \cite{R1}, 
but the same as the definition given in \cite{R3}). 
For a second countable \'etale groupoid $G$, 
by \cite[Proposition 3.1]{R3}, 
$G$ is essentially principal if and only if 
the set of points of $G^{(0)}$ with trivial isotropy is dense in $G^{(0)}$. 
For an \'etale groupoid $G$, 
we denote the reduced groupoid $C^*$-algebra of $G$ by $C^*_r(G)$ and 
identify $C_0(G^{(0)})$ with a subalgebra of $C^*_r(G)$. 

There are two important examples of \'etale groupoids. 
One is the class of transformation groupoids 
arising from actions of discrete groups. 

\begin{df}
Let $\phi:\Gamma\curvearrowright X$ be 
an action of a countable discrete group $\Gamma$ 
on a locally compact Hausdorff space $X$ by homeomorphisms. 
We let $G_\phi=\Gamma\times X$ and define the following groupoid structure: 
$(\gamma,x)$ and $(\gamma',x')$ are composable 
if and only if $x=\phi^{\gamma'}(x')$, 
$(\gamma,\phi^{\gamma'}(x'))\cdot(\gamma',x')=(\gamma\gamma',x')$ and 
$(\gamma,x)^{-1}=(\gamma^{-1},\phi^\gamma(x))$. 
Then $G_\phi$ is an \'etale groupoid and 
called the transformation groupoid 
arising from $\phi:\Gamma\curvearrowright X$. 
\end{df}

If the action $\phi$ is free 
(i.e. $\{\gamma\in\Gamma\mid\phi^\gamma(x){=}x\}=\{e\}$ for all $x\in X$, 
where $e$ denotes the neutral element), then 
$G_\phi$ is principal. 
The reduced groupoid $C^*$-algebra $C^*_r(G_\phi)$ is naturally 
isomorphic to the reduced crossed product $C^*$-algebra 
$C_0(X)\rtimes_{r,\phi}\Gamma$. 

The other important class is AF groupoids 
(\cite[Definition III.1.1]{R1}, \cite[Definition 3.7]{GPS3}). 

\begin{df}
Let $G$ be a second countable \'etale groupoid 
whose unit space is compact and totally disconnected. 
\begin{enumerate}
\item We say that $K\subset G$ is an elementary subgroupoid 
if $K$ is a compact open principal subgroupoid of $G$ 
such that $K^{(0)}=G^{(0)}$. 
\item We say that $G$ is an AF groupoid 
if it can be written as an increasing union of elementary subgroupoids. 
\end{enumerate}
\end{df}

An AF groupoid is principal by definition, and so 
it can be identified with an equivalence relation on the unit space. 
When $G$ is an AF groupoid, 
the reduced groupoid $C^*$-algebra $C^*_r(G)$ is an AF algebra. 
W. Krieger \cite{Kr} showed that 
two AF groupoids $G_1$ and $G_2$ are isomorphic if and only if 
$K_0(C^*_r(G_1))$ and $K_0(C^*_r(G_2))$ are isomorphic 
as ordered abelian groups with distinguished order units 
(see \cite[Definition 1.1.8]{Ro} 
for the definition of ordered abelian groups and order units). 
For classification of minimal AF groupoids up to orbit equivalence, 
we refer the reader to \cite{GPS1,GPS3}. 

We introduce the notion of full groups and topological full groups 
for \'etale groupoids. 

\begin{df}\label{tfg}
Let $G$ be an \'etale groupoid whose unit space $G^{(0)}$ is compact. 
\begin{enumerate}
\item The set of all $\gamma\in\Homeo(G^{(0)})$ such that 
for every $x\in G^{(0)}$ there exists $g\in G$ 
satisfying $r(g)=x$ and $s(g)=\gamma(x)$ 
is called the full group of $G$ and denoted by $[G]$. 
\item The set of all $\gamma\in\Homeo(G^{(0)})$ for which 
there exists a compact open $G$-set $U$ satisfying $\gamma=\tau_U$ 
is called the topological full group of $G$ and denoted by $[[G]]$. 
\end{enumerate}
Obviously $[G]$ is a subgroup of $\Homeo(G^{(0)})$ and 
$[[G]]$ is a subgroup of $[G]$. 
\end{df}

For a minimal homeomorphism $\phi$ on a Cantor set $X$, 
its full group $[\phi]$ and topological full group $\tau[\phi]$ were 
defined in \cite{GPS2}. 
One can check that 
$[\phi]$ and $\tau[\phi]$ are equal to $[G_\phi]$ and $[[G_\phi]]$ 
respectively, 
where $G_\phi=\Z\times X$ is the transformation groupoid arising from $\phi$. 
Moreover, for an \'etale equivalence relation 
on a compact metrizable and totally disconnected space, 
its topological full group was introduced in \cite{Msrtfg} and 
the above definition is an adaptation of it 
for a groupoid not necessarily principal. 
When $G$ is the \'etale groupoid 
arising from a subshift of finite type, 
$[[G]]$ and its connection with $C^*$-algebras were studied 
by K. Matsumoto (\cite{Matsu}).

%%%%%%%%%%%%%%%%%%%%%%%%%%%%%%%%%%%%%%%%%%%%%%%%%%%%%%%%%%%%
\section{Homology theory for \'etale groupoids}

We briefly recall homology theory for \'etale groupoids 
which was studied in \cite{CM}. 
In \cite{CM} homology groups are defined for sheaves on the unit space and 
discussed from various viewpoints by using methods of algebraic topology. 
Here, we restrict our attention to the case of constant coefficients and 
introduce homology groups in an elementary way, 
especially for people who are not familiar with algebraic topology.

%%%%%%%%%%%%%%%%%%%%%%%%%%%%%%%%%%%%%%%%
\subsection{Homology groups of \'etale groupoids}

Let $A$ be a topological abelian group. 
For a locally compact Hausdorff space $X$, we denote by $C_c(X,A)$ 
the set of $A$-valued continuous functions with compact support. 
When $X$ is compact, we simply write $C(X,A)$. 
With pointwise addition, $C_c(X,A)$ is an abelian group. 
Let $\pi:X\to Y$ be an \'etale map between locally compact Hausdorff spaces. 
For $f\in C_c(X,A)$, we define a map $\pi_*(f):Y\to A$ by 
\[
\pi_*(f)(y)=\sum_{\pi(x)=y}f(x). 
\]
It is not so hard to see that $\pi_*(f)$ belongs to $C_c(Y,A)$ and 
that $\pi_*$ is a homomorphism from $C_c(X,A)$ to $C_c(Y,A)$. 
Besides, if $\pi':Y\to Z$ is another \'etale map to 
a locally compact Hausdorff space $Z$, then 
one can check $(\pi'\circ\pi)_*=\pi'_*\circ\pi_*$ in a direct way. 

Let $G$ be an \'etale groupoid and 
let $G^{(0)}$ be the unit space. 
We let $s$ and $r$ denote the source and range maps. 
For $n\in\N$, we write $G^{(n)}$ 
for the space of composable strings of $n$ elements in $G$, that is, 
\[
G^{(n)}=\{(g_1,g_2,\dots,g_n)\in G^n\mid
s(g_i)=r(g_{i+1})\text{ for all }i=1,2,\dots,n{-}1\}. 
\]
For $i=0,1,\dots,n$, 
we let $d_i:G^{(n)}\to G^{(n-1)}$ be a map defined by 
\[
d_i(g_1,g_2,\dots,g_n)=\begin{cases}
(g_2,g_3,\dots,g_n) & i=0 \\
(g_1,\dots,g_ig_{i+1},\dots,g_n) & 1\leq i\leq n{-}1 \\
(g_1,g_2,\dots,g_{n-1}) & i=n. 
\end{cases}
\]
Clearly $d_i$ is \'etale. 
Let $A$ be a topological abelian group. 
Define the homomorphisms 
$\delta_n:C_c(G^{(n)},A)\to C_c(G^{(n-1)},A)$ 
by 
\[
\delta_1=s_*-r_* \qquad\text{ and }\qquad
\delta_n=\sum_{i=0}^n(-1)^id_{i*}. 
\]
It is easy to see that 
\[
0\stackrel{\delta_0}{\longleftarrow}
C_c(G^{(0)},A)\stackrel{\delta_1}{\longleftarrow}
C_c(G^{(1)},A)\stackrel{\delta_2}{\longleftarrow}
C_c(G^{(2)},A)\stackrel{\delta_3}{\longleftarrow}\dots
\]
is a chain complex. 

\begin{df}\label{homology}
We let $H_n(G,A)$ be the homology groups of the chain complex above, 
i.e. $H_n(G,A)=\ker\delta_n/\Ima\delta_{n+1}$, 
and call them the homology groups of $G$ with constant coefficients $A$. 
When $A=\Z$, we simply write $H_n(G)=H_n(G,\Z)$. 
In addition, we define 
\[
H_0(G)^+=\{[f]\in H_0(G)\mid f(u)\geq0\text{ for all }u\in G^{(0)}\}, 
\]
where $[f]$ denotes the equivalence class of $f\in C_c(G^{(0)},A)$. 
\end{df}

\begin{rem}
The pair $(H_0(G),H_0(G)^+)$ is not necessarily an ordered abelian group 
in general, 
because $H_0(G)^+\cap({-}H_0(G)^+)$ may not equal $\{0\}$. 
In fact, 
when $G$ is the \'etale groupoid arising from the full shift over $N$ symbols, 
$H_0(G)^+=H_0(G)\cong\Z/(N{-}1)\Z$. 
See Theorem \ref{CKHomology}. 
\end{rem}

Let $\phi:\Gamma\curvearrowright X$ be an action of 
a discrete group $\Gamma$ on a locally compact Hausdorff space $X$ 
by homeomorphisms. 
With pointwise addition $C_c(X,A)$ is an abelian group, 
and $\Gamma$ acts on it by translation. 
One can check that 
$H_n(G_\phi,A)$ is canonically isomorphic to $H_n(\Gamma,C_c(X,A))$, 
the homology of $\Gamma$ with coefficients in $C_c(X,A)$. 
Under the identification of $G^{(0)}$ with $X$, 
the image of $\delta_1$ is equal to the subgroup of $C_c(X,A)$ 
generated by 
\[
\{f-f\circ\phi^\gamma\mid f\in C_c(X,A), \ \gamma\in\Gamma\}, 
\]
and $H_0(G_\phi,A)$ is equal to the quotient of $C_c(X,A)$ by this subgroup. 

Suppose that $G^{(0)}$ is compact, metrizable and totally disconnected. 
The canonical inclusion $\iota:C(G^{(0)})\to C^*_r(G)$ induces 
a homomorphism $K_0(\iota):K_0(C(G^{(0)}))\to K_0(C^*_r(G))$. 
The $K_0$-group of $C(G^{(0)})$ is naturally identified with $C(G^{(0)},\Z)$. 
If $U$ is a compact open $G$-set, 
then $u=1_U$ is a partial isometry in $C^*_r(G)$ 
satisfying $u^*u=1_{s(U)}$ and $uu^*=1_{r(U)}$. 
Hence $(K_0(\iota)\circ\delta_1)(1_U)$ is zero. 
This means that 
the image of $\delta_1$ is contained in the kernel of $K_0(\iota)$, 
because $G$ has a countable base of compact open $G$-sets. 
It follows that 
we obtain a homomorphism $\Phi_0:H_0(G)\to K_0(C^*_r(G))$ 
such that $\Phi_0([f])=K_0(\iota)(f)$. 
It is natural to ask if the homomorphism $\Phi_0$ is injective or not, 
but we do not know the answer 
even when $G$ is the transformation groupoid arising from 
a free minimal action of $\Z^N$. 
In Section 7, we will show that 
there also exists a natural homomorphism $\Phi_1:H_1(G)\to K_1(C^*_r(G))$ 
under the assumption that $G$ is almost finite (Corollary \ref{Phi1}). 

Let $\phi:\Z^N\curvearrowright X$ be an action of $\Z^N$ on a Cantor set $X$. 
As mentioned above, 
$H_*(G_\phi)$ is isomorphic to the group homology $H_*(\Z^N,C(X,\Z))$, 
and hence to the group cohomology $H^*(\Z^N,C(X,\Z))$ by Poincar\'e duality. 
When $N=1$, 
it is straightforward to check that 
$H_i(G_\phi)$ is isomorphic to $K_i(C^*_r(G_\phi))$ for $i=0,1$. 
It is natural to ask whether the isomorphisms 
\[
K_0(C^*_r(G_\phi))\cong\bigoplus_iH_{2i}(G_\phi),\quad 
K_1(C^*_r(G_\phi))\cong\bigoplus_iH_{2i+1}(G_\phi)
\tag{$*$}
\]
hold for general $N$. 
As shown in \cite{FH}, there exists a spectral sequence 
\[
E_2^{p,q}\Rightarrow K_{p+q+N}(C^*_r(G_\phi))\quad\text{with}\quad
E_2^{p,q}=\begin{cases}H^p(\Z^N,C(X,\Z))&q\text{ is even} \\
0&q\text{ is odd,}\end{cases}
\]
and if the (co)homology groups were always torsion-free, then 
the isomorphisms $(*)$ would follow from this spectral sequence. 
However, it turns out that 
there exists a free minimal $\Z^N$-action $\phi$ 
which contains torsion in its (co)homology (\cite{GHK,Mtc}). 
Nevertheless, for certain classes of $\Z^N$-actions, 
it is known that the isomorphisms $(*)$ hold. 
We refer the reader to \cite{AP}, \cite{GHK} and \cite{SB} 
for detailed information. 
We also remark that 
the isomorphisms $(*)$ hold for AF groupoids and \'etale groupoids 
arising from subshifts of finite type 
(see Theorem \ref{AFHomology1}, \ref{AFHomology2}, \ref{CKHomology}).

%%%%%%%%%%%%%%%%%%%%%%%%%%%%%%%%%%%%%%%%
\subsection{Homological similarity}

In this subsection we introduce the notion of homological similarity 
(Definition \ref{similarity}) and 
prove that homologically similar groupoids have isomorphic homology 
(Proposition \ref{homosim}). 
A variant of Lindon-Hochschild-Serre spectral sequence is also given 
(Theorem \ref{LHS}). 

To begin with, we would like to consider functoriality of $H_n(G,A)$. 

\begin{df}
A map $\rho:G\to H$ between \'etale groupoids is called a homomorphism, 
if $\rho$ is a continuous map satisfying 
\[
(g,g')\in G^{(2)}\Rightarrow
(\rho(g),\rho(g'))\in H^{(2)}\text{ and }
\rho(g)\rho(g')=\rho(gg'). 
\]
We emphasize that continuity is already built in the definition. 
\end{df}

Compactly supported cohomology of spaces is 
covariant along local homeomorphisms and contravariant along proper maps. 
Analogous properties hold for homology of \'etale groupoids. 
Let $\rho:G\to H$ be a homomorphism between \'etale groupoids. 
We let $\rho^{(0)}$ denote the restriction of $\rho$ to $G^{(0)}$ and 
$\rho^{(n)}$ denote the restriction of the $n$-fold product 
$\rho\times\rho\times\dots\times\rho$ to $G^{(n)}$. 
One can easily see that the following three conditions are equivalent. 
\begin{enumerate}
\item $\rho^{(0)}$ is \'etale (i.e. a local homeomorphism). 
\item $\rho$ is \'etale. 
\item $\rho^{(n)}$ is \'etale for all $n\in\N$. 
\end{enumerate}
When $\rho$ is \'etale, 
$\rho^{(n)}_*:C_c(G^{(n)},A)\to C_c(H^{(n)},A)$ are homomorphisms 
commuting with the boundary operators $\delta_n$. 
It follows that we obtain a homomorphism 
\[
H_n(\rho):H_n(G,A)\to H_n(H,A). 
\]
If $\rho$ is proper, then 
one obtains a homomorphism from $C_c(H^{(n)},A)$ to $C_c(G^{(n)},A)$ 
by pullback, and hence a homomorphism 
\[
H^*_n(\rho):H_n(H,A)\to H_n(G,A). 
\]

The following is a variant of (continuous) similarity introduced in \cite{R1}. 
See also \cite[Proposition 3.8]{CM} and \cite[2.1.3]{FHK}. 

\begin{df}\label{similarity}
Let $G,H$ be \'etale groupoids. 
\begin{enumerate}
\item Two homomorphisms $\rho,\sigma$ from $G$ to $H$ are 
said to be similar 
if there exists an continuous map $\theta:G^{(0)}\to H$ such that 
\[
\theta(r(g))\rho(g)=\sigma(g)\theta(s(g))
\]
for all $g\in G$. 
Note that if $\rho$ and $\sigma$ are \'etale, then 
$\theta$ becomes automatically \'etale. 
\item The two groupoids $G$ and $H$ are said to be homologically similar 
if there exist \'etale homomorphisms $\rho:G\to H$ and $\sigma:H\to G$ 
such that $\sigma\circ\rho$ is similar to $\id_G$ and 
$\rho\circ\sigma$ is similar to $\id_H$. 
\end{enumerate}
\end{df}

\begin{prop}\label{homosim}
Let $G,H$ be \'etale groupoids. 
\begin{enumerate}
\item If \'etale homomorphisms $\rho,\sigma$ from $G$ to $H$ are similar, then 
$H_n(\rho)=H_n(\sigma)$. 
\item If $G$ and $H$ are homologically similar, 
then they have isomorphic homology with constant coefficients $A$. 
Moreover when $A=\Z$, the isomorphism maps $H_0(G)^+$ onto $H_0(H)^+$. 
\end{enumerate}
\end{prop}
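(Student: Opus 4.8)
The plan is to prove this by a standard homological-algebra argument transported from group homology to the chain complexes $C_c(G^{(\bullet)},A)$ introduced above. The essential point is that similarity of $\rho$ and $\sigma$ produces an explicit chain homotopy between the induced maps $\rho^{(\bullet)}_*$ and $\sigma^{(\bullet)}_*$, exactly as the existence of a conjugating $1$-cochain gives a chain homotopy between conjugate maps on the bar complex of a group.

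First I would treat (1). Given the continuous map $\theta:G^{(0)}\to H$ with $\theta(r(g))\rho(g)=\sigma(g)\theta(s(g))$ for all $g$, note (as the definition already remarks) that $\theta$ is automatically \'etale, so that for each $n$ the various ``insertion'' maps $G^{(n-1)}\to H^{(n)}$ obtained by combining $\rho$, $\sigma$ and $\theta$ are \'etale and hence push forward compactly supported functions. Concretely, I would define $h_n:C_c(G^{(n)},A)\to C_c(H^{(n+1)},A)$ as the alternating sum $h_n=\sum_{i=0}^{n}(-1)^i\, t_{i*}$, where $t_i:G^{(n)}\to H^{(n+1)}$ sends a composable string $(g_1,\dots,g_n)$ to
\[
\bigl(\sigma(g_1),\dots,\sigma(g_i),\ \theta(s(g_i)),\ \rho(g_{i+1}),\dots,\rho(g_n)\bigr),
\]
with the obvious conventions at the two ends (for $i=0$ one starts with $\theta(r(g_1))$, for $i=n$ one ends with $\theta(s(g_n))$). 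The similarity relation is precisely what is needed to make $t_i$ land in $H^{(n+1)}$: the string is composable at the spot where $\theta(s(g_i))$ meets $\rho(g_{i+1})$ because $s(\sigma(g_i))\cdot$ must match $r(\theta(s(g_i)))$, and at the splice one rewrites $\sigma(g_i)\theta(s(g_i))=\theta(r(g_i))\rho(g_i)$ when collapsing $d_i$. Then a direct (if tedious) bookkeeping of the faces $d_j$ on $H^{(n+1)}$, using functoriality $(\pi'\circ\pi)_*=\pi'_*\circ\pi_*$ and the fact that $d_j\circ t_i$ equals either $t_{i'}\circ d_{j'}$ on $G^{(n)}$ or, at the two ``collapsing'' faces adjacent to the $\theta$-slot, $\rho^{(n)}_*$ and $\sigma^{(n)}_*$ themselves, yields the chain-homotopy identity $\delta_{n+1}\circ h_n + h_{n-1}\circ\delta_n = \rho^{(n)}_* - \sigma^{(n)}_*$. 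Passing to homology gives $H_n(\rho)=H_n(\sigma)$.

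For (2), apply (1) twice. If $\rho:G\to H$ and $\sigma:H\to G$ are \'etale homomorphisms with $\sigma\circ\rho$ similar to $\id_G$ and $\rho\circ\sigma$ similar to $\id_H$, then by functoriality of $H_n(-)$ along \'etale homomorphisms and part (1),
\[
H_n(\sigma)\circ H_n(\rho) = H_n(\sigma\circ\rho) = H_n(\id_G) = \id_{H_n(G,A)},
\]
and symmetrically $H_n(\rho)\circ H_n(\sigma)=\id_{H_n(H,A)}$. Hence $H_n(\rho)$ is an isomorphism with inverse $H_n(\sigma)$. For the order-unit statement when $A=\Z$: an \'etale homomorphism induces on $H_0$ a map sending the class of $f\in C_c(G^{(0)},\Z)$ to the class of $\rho^{(0)}_*(f)$, and $\rho^{(0)}_*$ carries nonnegative functions to nonnegative functions since it is a sum over fibres. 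Thus $H_0(\rho)(H_0(G)^+)\subseteq H_0(H)^+$ and likewise $H_0(\sigma)(H_0(H)^+)\subseteq H_0(G)^+$; since the two are mutually inverse isomorphisms, $H_0(\rho)$ restricts to a bijection $H_0(G)^+\to H_0(H)^+$.

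The main obstacle is purely combinatorial: verifying the chain-homotopy identity in (1) requires correctly matching each face $d_j\circ t_{i*}$ with the appropriate $t_{i'*}\circ d_{j'}$ and checking that the two boundary terms at the $\theta$-slot telescope to give exactly $\rho^{(n)}_*-\sigma^{(n)}_*$, with all signs accounted for. There is also a minor technical point to dispatch first, namely that each $t_i$ really is \'etale (so that $t_{i*}$ is defined and $(\cdot)_*$ is functorial along it) — this follows because $t_i$ is built by interleaving the \'etale maps $\rho$, $\sigma$, $\theta$ with projections, but one should say a word about it. Everything else is formal.
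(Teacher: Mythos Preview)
Your proposal is correct and follows essentially the same approach as the paper: your maps $t_i$ coincide with the paper's $k_j$, the chain-homotopy identity $\delta_{n+1}\circ h_n+h_{n-1}\circ\delta_n=\rho^{(n)}_*-\sigma^{(n)}_*$ is the same, and your derivation of (2) from (1) (including the positivity argument, which the paper leaves implicit) is the intended one.
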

\begin{proof}
It suffices to show (1). 
There exists an \'etale map $\theta:G^{(0)}\to H$ such that 
$\theta(r(g))\rho(g)=\sigma(g)\theta(s(g))$ for all $g\in G$. 
For each $n\in\N\cup\{0\}$ 
we construct a homomorphism $h_n:C_c(G^{(n)},A)\to C_c(H^{(n+1)},A)$ 
as follows. 
First we put $h_0=\theta_*$. 
For $n\in\N$, we let $h_n=\sum_{j=0}^n(-1)^jk_{j*}$, 
where $k_j:G^{(n)}\to H^{(n+1)}$ is defined by 
\[
k_j(g_1,g_2,\dots,g_n)=\begin{cases}
(\theta(r(g_1)),\rho(g_1),\rho(g_2),\dots,\rho(g_n)) & j=0 \\
(\sigma(g_1),\dots,\sigma(g_j),\theta(s(g_j)),
\rho(g_{j+1}),\dots,\rho(g_n)) & 1\leq j\leq n-1 \\
(\sigma(g_1),\sigma(g_2),\dots,\sigma(g_n),\theta(s(g_n))) & j=n. 
\end{cases}
\]
It is straightforward to verify 
$\delta_1\circ h_0=\rho^{(0)}_*-\sigma^{(0)}_*$ and 
\[
\delta_{n+1}\circ h_n+h_{n-1}\circ\delta_n
=\rho^{(n)}_*-\sigma^{(n)}_*
\]
for all $n\in\N$. 
Hence we get $H_n(\rho)=H_n(\sigma)$. 
\end{proof}

\begin{thm}\label{reduction}
Let $G$ be an \'etale groupoid and 
let $F\subset G^{(0)}$ be an open $G$-full subset. 
\begin{enumerate}
\item If there exists a continuous map $\theta:G^{(0)}\to G$ such that 
$r(\theta(x))=x$ and $s(\theta(x))\in F$ for all $x\in G^{(0)}$, then 
$G$ is homologically similar to $G|F$. 
\item Suppose that $G^{(0)}$ is $\sigma$-compact and totally disconnected. 
Then $G$ is homologically similar to $G|F$. 
\end{enumerate}
\end{thm}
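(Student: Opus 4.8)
The overall plan is to prove (1) by writing down explicit étale homomorphisms that realize the homological similarity (rather than building a chain homotopy by hand), and then to reduce (2) to (1) by manufacturing the section $\theta$ through a patching argument that uses total disconnectedness. For (1), set $H=G|F$. I would take $\sigma\colon H\to G$ to be the inclusion; since $F$ is open, $\sigma^{(0)}\colon F\hookrightarrow G^{(0)}$ is an open embedding, so $\sigma$ is étale. In the other direction I would define $\rho\colon G\to H$ by
\[
\rho(g)=\theta(r(g))^{-1}\,g\,\theta(s(g)).
\]
The routine checks are: the right-hand side lies in $G|F$ (its range and source are sources of values of $\theta$, hence in $F$); $\rho$ is continuous; and $\rho$ is multiplicative, because when $s(g)=r(g')$ we have $\theta(s(g))=\theta(r(g'))$, so $\theta(s(g))\theta(r(g'))^{-1}=1_{s(g)}$ and the inner part telescopes, giving $\rho(g)\rho(g')=\theta(r(gg'))^{-1}(gg')\theta(s(gg'))=\rho(gg')$. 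To see $\rho$ is étale it suffices to see $\rho^{(0)}$ is, and $\rho^{(0)}(x)=s(\theta(x))$; the point is that a continuous section of an étale map is automatically a local homeomorphism onto its image, so $\theta$ is étale and hence $\rho^{(0)}=s\circ\theta$ is étale into the open set $F$.

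It then remains to verify the two similarities, and in both cases the intertwiner is $\theta$ itself. For $\sigma\circ\rho\sim\id_G$ one computes $\theta(r(g))\,(\sigma\circ\rho)(g)=\theta(r(g))\theta(r(g))^{-1}g\,\theta(s(g))=g\,\theta(s(g))$. For $\rho\circ\sigma\sim\id_H$ note that for $x\in F$ the element $\theta(x)$ has range $x\in F$ and source in $F$, hence lies in $H=G|F$; so $\theta|_F$ is a continuous map $F\to H$, and the identical computation with $h\in G|F$ in place of $g$ gives $\theta(r(h))\,\rho(h)=h\,\theta(s(h))$. By Proposition \ref{homosim}(2) this yields the claimed isomorphism of homology groups with constant coefficients, carrying $H_0(G)^+$ onto $H_0(G|F)^+$, which proves (1).

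For (2), by (1) it is enough to produce a continuous $\theta\colon G^{(0)}\to G$ with $r\circ\theta=\id$ and $s(\theta(G^{(0)}))\subset F$. For each $x\in G^{(0)}$, $G$-fullness supplies $g\in G$ with $r(g)=x$ and $s(g)\in F$; since $r$ is étale and $s$ is continuous, and since $G^{(0)}$ is locally compact and totally disconnected (so has a base of compact open sets), I can choose a compact open $G$-set $U_x\ni g$ with $s(U_x)\subset F$ and $r(U_x)$ compact open. By $\sigma$-compactness, countably many of the clopen sets $V_n:=r(U_{x_n})$ cover $G^{(0)}$; then $W_n:=V_n\setminus(V_1\cup\cdots\cup V_{n-1})$ is again clopen (finite unions of compact open sets are clopen), and the $W_n$ partition $G^{(0)}$ into clopen sets. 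Defining $\theta$ on $W_n\subset V_n$ to agree with $(r|_{U_{x_n}})^{-1}$ produces a map that is continuous on each block of a clopen partition, hence continuous, and by construction $r(\theta(x))=x$ and $s(\theta(x))\in s(U_{x_n})\subset F$. The verifications in (1) are essentially mechanical once one notices the fact that a continuous section of an étale map is a local homeomorphism, so the only genuinely substantive point is the gluing in (2): total disconnectedness is exactly what lets the local sections coming from $r$ being étale be assembled along a clopen partition into one globally continuous section, and that is where the hypothesis of (2) is used in an essential way.
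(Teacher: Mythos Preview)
Your proof is correct and follows essentially the same approach as the paper: for (1) you define the same $\rho(g)=\theta(r(g))^{-1}g\,\theta(s(g))$ and $\sigma=$ inclusion, and verify the two similarities via $\theta$ (the paper records this more tersely); for (2) you manufacture $\theta$ by the same disjointification procedure, the only cosmetic difference being that the paper disjointifies the $G$-sets themselves in $G$ while you disjointify their ranges in $G^{(0)}$. Your added justification that a continuous section of an \'etale map is a local homeomorphism is exactly what underlies the paper's one-line remark that $\theta$ is \'etale.
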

\begin{proof}
(1) 
Notice that $\theta$ is \'etale. 
Let $\rho:G\to G|F$ and $\sigma:G|F\to G$ be \'etale homomorphisms 
defined by $\rho(g)=\theta(r(g))^{-1}g\theta(s(g))$ and $\sigma(g)=g$. 
It is easy to see that 
$\rho\circ\sigma$ is similar to the identity on $G|F$ and 
that $\sigma\circ\rho$ is similar to the identity on $G$ 
via the map $\theta$. 
Hence $G$ and $G|F$ are homologically similar. 

(2) There exists a countable family of compact open $G$-sets $\{U_n\}_n$ 
such that $\{r(U_n)\}_n$ covers $G$ and $s(U_n)\subset F$. 
Define compact open $G$-sets $V_1,V_2,\dots$ inductively by $V_1=U_1$ and 
\[
V_n=U_n\setminus r^{-1}(r(V_1\cup\dots\cup V_{n-1})). 
\]
We can define $\theta:G^{(0)}\to G$ 
by $\theta(x)=(r|V_n)^{-1}(x)$ for $x\in V_n$. 
Clearly $\theta$ satisfies the assumption of (1), 
and so the proof is completed. 
\end{proof}

We recall from \cite{R1} 
the notion of skew products and semi-direct products of \'etale groupoids. 
Let $G$ be an \'etale groupoid and 
let $\Gamma$ be a countable discrete group. 
When $\rho:G\to\Gamma$ is a homomorphism, 
the skew product $G\times_\rho\Gamma$ is 
$G\times\Gamma$ with the following groupoid structure: 
$(g,\gamma)$ and $(g',\gamma')$ is composable 
if and only if $g$ and $g'$ are composable and $\gamma\rho(g)=\gamma'$, 
$(g,\gamma)\cdot(g',\gamma\rho(g))=(gg',\gamma)$ and 
$(g,\gamma)^{-1}=(g^{-1},\gamma\rho(g))$. 
We can define an action $\hat\rho:\Gamma\curvearrowright G\times_\rho\Gamma$ 
by $\hat\rho^\gamma(g',\gamma')=(g',\gamma\gamma')$. 

When $\phi:\Gamma\curvearrowright G$ is an action of $\Gamma$ on $G$, 
the semi-direct product $G\rtimes_\phi\Gamma$ is 
$G\times\Gamma$ with the following groupoid structure: 
$(g,\gamma)$ and $(g',\gamma')$ is composable 
if and only if $g$ and $\phi^\gamma(g')$ are composable, 
$(g,\gamma)\cdot(g',\gamma')=(g\phi^\gamma(g'),\gamma\gamma')$ and 
$(g,\gamma)^{-1}=(\phi^{\gamma^{-1}}(g^{-1}),\gamma^{-1})$. 
There exists a natural homomorphism $\tilde\phi:G\rtimes_\phi\Gamma\to\Gamma$ 
defined by $\tilde\phi(g,\gamma)=\gamma$. 
The following proposition can be shown 
in a similar fashion to \cite[I.1.8]{R1} 
by using Theorem \ref{reduction} (1). 

\begin{prop}\label{Takai}
Let $G$ be an \'etale groupoid and 
let $\Gamma$ be a countable discrete group. 
\begin{enumerate}
\item When $\rho:G\to\Gamma$ is a homomorphism, 
$(G\times_\rho\Gamma)\rtimes_{\hat\rho}\Gamma$ is 
homologically similar to $G$. 
\item When $\phi:\Gamma\curvearrowright G$ is an action, 
$(G\rtimes_\phi\Gamma)\times_{\tilde\phi}\Gamma$ is 
homologically similar to $G$. 
\end{enumerate}
\end{prop}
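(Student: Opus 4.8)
The plan is to deduce both statements from Theorem \ref{reduction}(1): in each case I will exhibit, inside the unit space of the iterated groupoid, an open $G$-full subset $F$ together with a continuous section $\theta$ satisfying $r(\theta(x))=x$ and $s(\theta(x))\in F$, so that the iterated groupoid is homologically similar to its reduction to $F$; and then I will identify that reduction with $G$ itself. In both cases the unit space of the iterated groupoid is canonically $G^{(0)}\times\Gamma$, and in both cases the right choice is $F=G^{(0)}\times\{e\}$, which is open because $\Gamma$ is discrete.

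For (1), write a general element of $(G\times_\rho\Gamma)\rtimes_{\hat\rho}\Gamma$ as $((g,\gamma),\mu)$ with $g\in G$ and $\gamma,\mu\in\Gamma$. Computing inverses and one-sided products through the two stacked constructions, one finds that the range of $((g,\gamma),\mu)$ is the unit $(r(g),\gamma)$ and its source is the unit $(s(g),\mu^{-1}\gamma\rho(g))$, under the identification $((x,\gamma),e)\leftrightarrow(x,\gamma)$ of the unit space with $G^{(0)}\times\Gamma$. In particular the arrow $((x,\gamma),\gamma)$ runs from $(x,\gamma)$ to $(x,e)\in F$ for every $x\in G^{(0)}$ and $\gamma\in\Gamma$, so $F$ is $G$-full; and $\theta(x,\gamma)=((x,\gamma),\gamma)$ is a continuous section satisfying the hypotheses of Theorem \ref{reduction}(1). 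From the source and range formulas an arrow $((g,\gamma),\mu)$ lies in the reduction to $F$ exactly when $\gamma=e$ and $\mu=\rho(g)$, so $g\mapsto((g,e),\rho(g))$ is a bijection of $G$ onto that reduction; a short check that it respects composability and products shows it is a groupoid isomorphism, proving (1).

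For (2), write an element of $(G\rtimes_\phi\Gamma)\times_{\tilde\phi}\Gamma$ as $((g,\gamma),\mu)$; the same kind of computation gives that its range is the unit $(r(g),\mu)$ and its source is the unit $(\phi^{\gamma^{-1}}(s(g)),\mu\gamma)$ in $G^{(0)}\times\Gamma$. Again take $F=G^{(0)}\times\{e\}$; the arrow $((x,\gamma^{-1}),\gamma)$ runs from $(x,\gamma)$ to $(\phi^{\gamma}(x),e)\in F$, so $F$ is $G$-full, and $\theta(x,\gamma)=((x,\gamma^{-1}),\gamma)$ is the desired section. An arrow lies in the reduction to $F$ precisely when $\mu=e$ and $\gamma=e$, and then $g\mapsto((g,e),e)$ is a groupoid isomorphism of $G$ onto the reduction, so Theorem \ref{reduction}(1) gives (2).

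The routine but somewhat delicate part is the bookkeeping: one must keep the three stacked groupoid structures straight and compute inverses, source and range maps correctly at each level, and then verify that the candidate identifications are genuine groupoid homomorphisms and not merely bijections of objects. No idea beyond Theorem \ref{reduction}(1) is needed, and the whole argument follows the pattern of \cite[I.1.8]{R1}.
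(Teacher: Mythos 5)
Your proposal is correct and follows exactly the route the paper intends: the paper gives no details beyond remarking that the proposition "can be shown in a similar fashion to [R1, I.1.8] by using Theorem \ref{reduction}(1)," and your choice of $F=G^{(0)}\times\{e\}$, the sections $\theta$, and the identifications of the reductions with $G$ carry that out correctly (the source/range formulas all check out, using $\rho(x)=e$ for units in part (1)).
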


For skew products and semi-direct products, 
the following Lindon-Hochschild-Serre spectral sequences exist. 
This will be used later 
for a computation of the homology groups of 
\'etale groupoids arising from subshifts of finite type. 

\begin{thm}\label{LHS}
Let $G$ be an \'etale groupoid and 
let $\Gamma$ be a countable discrete group. 
Let $A$ be a topological abelian group. 
\begin{enumerate}
\item Suppose that $\rho:G\to\Gamma$ is a homomorphism. 
Then there exists a spectral sequence: 
\[
E^2_{p,q}=H_p(\Gamma,H_q(G\times_\rho\Gamma,A))\Rightarrow
H_{p+q}(G,A), 
\]
where $H_q(G\times_\rho\Gamma,A)$ is regarded as a $\Gamma$-module 
via the action $\hat\rho:\Gamma\curvearrowright G\times_\rho\Gamma$. 
\item Suppose that $\phi:\Gamma\curvearrowright G$ is an action. 
Then there exists a spectral sequence: 
\[
E^2_{p,q}=H_p(\Gamma,H_q(G,A))\Rightarrow
H_{p+q}(G\rtimes_\phi\Gamma,A), 
\]
where $H_q(G,A)$ is regarded as a $\Gamma$-module via the action $\phi$. 
\end{enumerate}
\end{thm}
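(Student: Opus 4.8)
The plan is to establish (1) directly by means of a first‑quadrant double complex, and then to derive (2) from (1) using Proposition \ref{Takai}. For (1), fix the bar resolution $B_\bullet\to\Z$ of $\Z$ by free $\Z\Gamma$‑modules, so that $H_p(\Gamma,M)=H_p(B_\bullet\otimes_{\Z\Gamma}M)$ for every $\Z\Gamma$‑module $M$. For each $q\geq0$ the group $C_c((G\times_\rho\Gamma)^{(q)},A)$ becomes a $\Z\Gamma$‑module via the automorphisms induced by $\hat\rho:\Gamma\curvearrowright G\times_\rho\Gamma$, and since each $\hat\rho^\gamma$ is a groupoid automorphism it commutes with the face maps $d_i$; hence the boundary maps $\delta_q$ are $\Z\Gamma$‑linear. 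Set
\[
D_{p,q}=B_p\otimes_{\Z\Gamma}C_c\bigl((G\times_\rho\Gamma)^{(q)},A\bigr),
\]
with horizontal differential induced by that of $B_\bullet$ and vertical differential $\pm\,\id\otimes\delta_q$. This is a double complex concentrated in the first quadrant, so both of its iterated‑homology spectral sequences converge to the homology of its total complex.

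Taking vertical homology first: since $B_p$ is free, hence flat, over $\Z\Gamma$, position $(p,q)$ becomes $B_p\otimes_{\Z\Gamma}H_q(G\times_\rho\Gamma,A)$, and a further horizontal homology produces the page $E^2_{p,q}=H_p(\Gamma,H_q(G\times_\rho\Gamma,A))$, which is the stated $E^2$. Taking horizontal homology first: the key point is that $C_c((G\times_\rho\Gamma)^{(q)},A)$ is induced from the trivial subgroup of $\Gamma$. Indeed, the homeomorphism $(G\times_\rho\Gamma)^{(q)}\cong G^{(q)}\times\Gamma$ recording a composable string together with its first $\Gamma$‑coordinate carries $\hat\rho$ to the action of $\Gamma$ on the second factor by left translation, so $G^{(q)}\times\{e\}$ is a clopen fundamental domain; decomposing a compactly supported function along the clopen partition $\{\,G^{(q)}\times\{\gamma\}\,\}_{\gamma\in\Gamma}$ (a finite sum, by compactness of the support) yields $C_c((G\times_\rho\Gamma)^{(q)},A)\cong\Z\Gamma\otimes_\Z C_c(G^{(q)},A)$ as $\Z\Gamma$‑modules. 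By Shapiro's lemma the horizontal homology is then concentrated in degree $p=0$, where it equals the coinvariants, naturally isomorphic to $C_c(G^{(q)},A)$; since the face maps $d_i$ descend to those of $G$ under passage to orbit spaces (even though they need not preserve the chosen fundamental domain, the discrepancy is killed on coinvariants), the induced differential is exactly $\delta_q$. Hence this spectral sequence degenerates and the total complex computes $H_*(G,A)$; comparison of the two spectral sequences proves (1).

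For (2), apply (1) with $\rho=\tilde\phi:G\rtimes_\phi\Gamma\to\Gamma$. This produces a spectral sequence $H_p(\Gamma,H_q((G\rtimes_\phi\Gamma)\times_{\tilde\phi}\Gamma,A))\Rightarrow H_{p+q}(G\rtimes_\phi\Gamma,A)$, with the $\Gamma$‑module structure on the coefficients coming from $\widehat{\tilde\phi}$. By Proposition \ref{Takai}(2) the groupoid $(G\rtimes_\phi\Gamma)\times_{\tilde\phi}\Gamma$ is homologically similar to $G$; one verifies that the similarity produced there (via Theorem \ref{reduction}(1) and an explicit section $\theta$) intertwines $\widehat{\tilde\phi}$ with the $\Gamma$‑action on $G$ induced by $\phi$, so by Proposition \ref{homosim} it induces a $\Z\Gamma$‑module isomorphism $H_q((G\rtimes_\phi\Gamma)\times_{\tilde\phi}\Gamma,A)\cong H_q(G,A)$. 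Substituting this isomorphism into the spectral sequence gives (2). (Conversely, (1) can be recovered from (2) by applying the latter to $\hat\rho$ and invoking Proposition \ref{Takai}(1); that deduction needs no equivariance check, since the coefficient module is then already the desired one.)

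I expect the main obstacle to lie in the two bookkeeping steps: showing that $C_c((G\times_\rho\Gamma)^{(q)},A)$ is genuinely induced (the interplay of the clopen partition with compact supports) and, more delicately, identifying the differential on the coinvariants with $\delta_q$ on the nose rather than merely up to chain homotopy; together with the $\Gamma$‑equivariance verification in the passage to (2), which is routine but must be carried out with care since the section $\theta$ in Proposition \ref{Takai}(2) is not canonical.
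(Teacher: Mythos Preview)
Your argument is correct. The paper itself does not give a self-contained proof of (1): it simply invokes \cite[Theorem~4.4]{CM}, and then deduces (2) from (1) together with Proposition~\ref{Takai}(2), exactly as you do. So for part (2) your approach and the paper's coincide; the difference lies entirely in part (1), where you supply a direct double-complex proof in place of a citation.

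Your route is the standard Lyndon--Hochschild--Serre construction specialized to this situation, and the key technical input---that $C_c((G\times_\rho\Gamma)^{(q)},A)$ is an induced $\Z\Gamma$-module---follows cleanly from the homeomorphism $(G\times_\rho\Gamma)^{(q)}\cong G^{(q)}\times\Gamma$ you describe. Your three self-flagged worries all resolve: the clopen partition interacts well with compact supports precisely because $\Gamma$ is discrete; the only face map that fails to preserve the fundamental domain is $d_0$, which shifts the $\Gamma$-coordinate by $\rho(g_1)$, and this shift is indeed killed on coinvariants so the induced differential is $\delta_q$ on the nose; and the equivariance needed in (2) holds because the homological similarity of Proposition~\ref{Takai}(2) is implemented via the restriction to the slice $G^{(0)}\times\{e\}$, which is visibly compatible with the two $\Gamma$-actions. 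The paper's proof, by contrast, outsources all of this to the sheaf-theoretic framework of Crainic--Moerdijk, which is more general but less transparent in this concrete setting. Your version has the virtue of being entirely elementary and self-contained within the chain-level definitions of Section~3.
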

\begin{proof}
(1) is a special case of \cite[Theorem 4.4]{CM}. 
(2) immediately follows from (1) and Proposition \ref{Takai} (2). 
\end{proof}

We remark that 
similar spectral sequences exist for cohomology of \'etale groupoids, too.

%%%%%%%%%%%%%%%%%%%%%%%%%%%%%%%%%%%%%%%%%%%%%%%%%%%%%%%%%%%%
\section{Kakutani equivalence}

In this section we introduce the notion of Kakutani equivalence 
for \'etale groupoids whose unit spaces are compact and totally disconnected. 
We also compute the homology groups of AF groupoids and 
\'etale groupoids arising from subshifts of finite type.

%%%%%%%%%%%%%%%%%%%%%%%%%%%%%%%%%%%%%%%%
\subsection{Kakutani equivalence}

\begin{df}
Let $G_i$ be an \'etale groupoid 
whose unit space is compact and totally disconnected for $i=1,2$. 
When there exists a $G_i$-full clopen subset $Y_i\subset G_i^{(0)}$ 
for $i=1,2$ and $G_1|Y_1$ is isomorphic to $G_2|Y_2$, 
we say that $G_1$ is Kakutani equivalent to $G_2$. 
\end{df}

It will be proved in Lemma \ref{Kakutaniequiv} that 
the Kakutani equivalence is really an equivalence relation. 

\begin{rem}
In the case of transformation groupoids arising from $\Z$-actions, 
the Kakutani equivalence defined above is weaker 
than the Kakutani equivalence for $\Z$-actions introduced 
in \cite{GPS1}. 
Indeed, 
for minimal homeomorphisms $\phi_1,\phi_2$ on Cantor sets, 
the \'etale groupoids associated with them are 
Kakutani equivalent in the sense above if and only if 
$\phi_1$ is Kakutani equivalent to either of $\phi_2$ and $\phi_2^{-1}$ 
in the sense of \cite[Definition 1.7]{GPS1}. 
See also \cite[Theorem 2.4]{GPS1} and \cite{BT}. 
\end{rem}

Let $G$ be an \'etale groupoid 
whose unit space is compact and totally disconnected. 
For $f\in C(G^{(0)},\Z)$ with $f\geq0$, we let 
\[
G_f=\{(g,i,j)\in G\times\Z\times\Z\mid
0\leq i\leq f(r(g)),\ 0\leq j\leq f(s(g))\}
\]
and equip $G_f$ with the induced topology 
from the product topology on $X\times\Z\times\Z$. 
The groupoid structure of $G_f$ is given as follows: 
\[
G_f^{(0)}=\{(x,i,i)\mid x\in G^{(0)}, \ 0\leq i\leq f(x)\}, 
\]
$(g,i,j)^{-1}=(g^{-1},j,i)$, 
two elements $(g,i,j)$ and $(h,k,l)$ are composable 
if and only if $s(g)=r(h),j=k$ and 
the product is $(g,i,j)(h,j,l)=(gh,i,l)$. 
It is easy to see that 
$G_f$ is an \'etale groupoid and 
the clopen subset $\{(x,0,0)\in G_f^{(0)}\mid x\in G^{(0)}\}$ 
of $G_f^{(0)}$ is $G_f$-full. 

\begin{lem}\label{tower1}
Let $G$ be an \'etale groupoid 
whose unit space is compact and totally disconnected and 
let $Y\subset G^{(0)}$ be a $G$-full clopen subset. 
There exists $f\in C(Y,\Z)$ and an isomorphism $\pi:(G|Y)_f\to G$ 
such that $\pi(g,0,0)=g$ for all $g\in G|Y$. 
\end{lem}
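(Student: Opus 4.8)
The plan is to build the function $f$ by a compactness argument and then verify that the obvious identification is a groupoid isomorphism. First I would observe that since $Y$ is $G$-full and $G^{(0)}$ is compact and totally disconnected, for each $x\in G^{(0)}$ we may pick $g_x\in G$ with $r(g_x)=x$ and $s(g_x)\in Y$, and then a compact open $G$-set $U_x\ni g_x$ with $s(U_x)\subset Y$. The sets $r(U_x)$ form an open cover of $G^{(0)}$; extract a finite subcover and, by passing to differences exactly as in the proof of Theorem \ref{reduction}(2), obtain finitely many disjoint compact open $G$-sets $V_1,\dots,V_m$ with $s(V_k)\subset Y$ for all $k$ and $\bigcup_k r(V_k)=G^{(0)}$. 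These $V_k$ give a continuous map $\theta:G^{(0)}\to G$ with $r(\theta(x))=x$, $s(\theta(x))\in Y$, namely $\theta(x)=(r|V_k)^{-1}(x)$ when $x\in r(V_k)$ (after disjointifying the $r(V_k)$ as well).

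Next I would use these $V_k$ to count how many elements of $G$ lie ``over'' each point of $Y$. For $y\in Y$, set $f(y)+1 = \lvert\{k : y\in s(V_k)\}\rvert$ — more precisely, partition $Y$ into finitely many clopen pieces on each of which the set of indices $k$ with $s(V_k)\ni y$ is constant, and let $f(y)$ be one less than the size of that index set; then $f\in C(Y,\Z)$ with $f\geq 0$. On each such clopen piece we have an ordered list $V_{k_0},\dots,V_{k_{f(y)}}$ of the $G$-sets whose source contains $y$; write $w_i(y)=(s|V_{k_i})^{-1}(y)\in G$, so $s(w_i(y))=y$ and the $r(w_i(y))$ for $i=0,\dots,f(y)$ exhaust (without repetition, by disjointness of the $V_k$) the preimage $r^{-1}\big(\bigcup_k r(V_k)\big)\cap s^{-1}(y)=s^{-1}(y)$, since $\bigcup_k r(V_k)=G^{(0)}$. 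Thus every $g\in G$ has a unique expression $g = w_i(r(g))^{-1}\,h\,w_j(s(g))$ with $h\in G|Y$, $0\le i\le f(r(g))$, $0\le j\le f(s(g))$; this is precisely a bijection $G\to (G|Y)_f$, and I would define $\pi^{-1}(g)=(h,i,j)$, equivalently $\pi(h,i,j)=w_i(\cdot)^{-1} h\, w_j(\cdot)$. The normalization $\pi(g,0,0)=g$ for $g\in G|Y$ is arranged by demanding that the indexing puts the ``identity'' choice $w_0(y)=y$ whenever $y$ already lies in the base copy — i.e. order the index set so the tautological $G$-set (a clopen piece of $G^{(0)}$, if present) comes first; one can always include $G^{(0)}|Y$ among the $V_k$.

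Finally I would check that $\pi$ is a groupoid isomorphism: it is a bijection by the previous paragraph, it is continuous and open because each $w_i$ is continuous on its clopen domain and the $V_k$ are compact open $G$-sets (so $\pi$ is a local homeomorphism that is globally bijective, hence a homeomorphism), it carries $G_f^{(0)}$ to $G^{(0)}$ correctly since $\pi(y,i,i)=w_i(y)^{-1}w_i(y)=r(w_i(y))\in G^{(0)}$, and it intertwines inversion and the partial multiplication by a direct check from the formula $\pi(h,i,j)=w_i^{-1}hw_j$ together with the definition of composition in $(G|Y)_f$ (composability forces the middle indices to agree, matching $s(w_j(s(g)))=s(g)$). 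The main obstacle is the bookkeeping in the second paragraph: making the clopen decomposition of $Y$, the ordering of the fibres, and the disjointification all compatible so that $f$ is genuinely continuous and the map $g\mapsto(h,i,j)$ is well defined and bijective rather than merely surjective; once the $w_i$ are in hand, the verification that $\pi$ is a homeomorphic groupoid isomorphism is routine.
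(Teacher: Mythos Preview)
Your strategy is exactly the paper's: cover the unit space by finitely many compact open $G$-sets with source in $Y$, disjointify their ranges to obtain $V_1,\dots,V_m$, set $f(y)$ to be (one less than) the number of indices $k$ with $y\in s(V_k)$, and then build $\pi$ from the resulting sections. The paper's only cosmetic difference is that it covers $G^{(0)}\setminus Y$ rather than all of $G^{(0)}$, which makes the normalization $\pi(g,0,0)=g$ automatic by declaring $\theta(y,0,0)=y$.

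Two points need correction, though. First, your formula for $\pi$ has the inverse on the wrong side. Since $s(w_i(y))=y$, the composable product is
\[
\pi(h,i,j)=w_i(r(h))\cdot h\cdot w_j(s(h))^{-1},
\]
not $w_i^{-1}hw_j$; with your version $w_i(r(h))^{-1}$ and $h$ are not even composable, and the computation $w_i(y)^{-1}w_i(y)$ equals $s(w_i(y))=y$, not the $r(w_i(y))$ you want. (Your stated conclusion $\pi(y,i,i)=r(w_i(y))$ is correct, which suggests you have the right picture and simply inverted the wrong factor.) Second, the claim that the $w_i(y)$ ``exhaust $s^{-1}(y)$'' is false: the source fibre $s^{-1}(y)$ may well be infinite. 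What you actually need, and what disjointness of the $r(V_k)$ together with $\bigcup_k r(V_k)=G^{(0)}$ gives you, is that the map $(y,i)\mapsto r(w_i(y))$ is a bijection from $(G|Y)_f^{(0)}$ onto $G^{(0)}$; the bijection $G\leftrightarrow (G|Y)_f$ then follows because any $g\in G$ can be written uniquely as $\theta(r(g))\cdot h\cdot\theta(s(g))^{-1}$ with $h\in G|Y$, where $\theta$ is your section $G^{(0)}\to G$. Once these are fixed, the remaining verifications are indeed routine.
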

\begin{proof}
We put $X=G^{(0)}$ for notational convenience. 
For any $x\in X\setminus Y$, 
there exists $g\in r^{-1}(x)\cap s^{-1}(Y)$, because $Y$ is $R$-full. 
We can choose a compact open $G$-set $U_x$ containing $g$ 
so that $r(U_x)\subset X\setminus Y$ and $s(U_x)\subset Y$. 
The family of clopen subsets $\{r(U_x)\mid x\in X\setminus Y\}$ 
forms an open covering of $X\setminus Y$, 
and so we can find $x_1,x_2,\dots,x_n\in X\setminus Y$ such that 
$r(U_{x_1})\cup r(U_{x_2})\cup\dots\cup r(U_{x_n})=X\setminus Y$. 
Define compact open $G$-sets $V_1,V_2,\dots,V_n$ inductively by 
\[
V_1=U_{x_1} \quad \text{and} \quad
V_k=U_{x_k}\setminus r^{-1}(r(V_1\cup\dots\cup V_{k-1})). 
\]
Then $r(V_1),r(V_2),\dots,r(V_n)$ are mutually disjoint and 
their union is equal to $X\setminus Y$. 
For each subset $\lambda\subset\{1,2,\dots,n\}$, 
we fix a bijection 
$\alpha_\lambda:\{k\in\N\mid k\leq\lvert\lambda\rvert\}\to\lambda$. 
For $y\in Y$, 
put $\lambda(y)=\{k\in\{1,2,\dots,n\}\mid y\in s(V_k)\}$. 
We define $f\in C(Y,\Z)$ by $f(y)=\lvert\lambda(y)\rvert$. 
Since each $s(V_k)$ is clopen, $f$ is continuous. 
We further define $\theta:(G|Y)_f^{(0)}\to G$ by 
\[
\theta(y,i,i)=\begin{cases}
y & \text{if }i=0 \\
(s|V_l)^{-1}(y) & \text{otherwise}, \end{cases}
\]
where $l=\alpha_{\lambda(y)}(i)$. 
It is not so hard to see that 
\[
\pi(g,i,j)=\theta(r(g),i,i)\cdot g\cdot\theta(s(g),j,j)^{-1}
\]
gives an isomorphism from $(G|Y)_f$ to $G$. 
\end{proof}

\begin{lem}\label{tower2}
Let $G$ be an \'etale groupoid 
whose unit space is compact and totally disconnected and 
let $Y,Y'\subset G^{(0)}$ be $G$-full clopen subsets. 
Then $G|Y$ and $G|Y'$ are Kakutani equivalent. 
\end{lem}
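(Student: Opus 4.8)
The plan is to use Lemma~\ref{tower1} to realise $G$ as an inflation of $G|Y$, to pull $Y'$ back into that picture, and then to collapse it onto the bottom copy of $G|Y$ by a single compact open $G$-set. Concretely, I would first apply Lemma~\ref{tower1} to fix $f\in C(Y,\Z)$ with $f\ge0$ and an isomorphism $\pi\colon(G|Y)_f\to G$ with $\pi(g,0,0)=g$. As $Y$ is compact, $f$ is bounded, so $(G|Y)_f^{(0)}=\{(y,i,i)\mid y\in Y,\ 0\le i\le f(y)\}$ is a union of finitely many clopen ``levels'' $L_i=\{(y,i,i)\mid f(y)\ge i\}$, with ``fibre'' $F_y=\{(y,0,0),\dots,(y,f(y),f(y))\}$ over $y$; the zero level $L_0$ is $(G|Y)_f$-full and $(G|Y)_f|L_0=\{(g,0,0)\mid g\in G|Y\}$ is canonically $G|Y$. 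Put $Y''=\pi^{-1}(Y')\subseteq(G|Y)_f^{(0)}$; it is clopen, and since $\pi$ is an isomorphism and $Y'$ is $G$-full, $Y''$ is $(G|Y)_f$-full.

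The heart of the argument is to thin $Y''$ to a clopen set $W$ which meets each fibre $F_y$ in at most one point but still meets every $(G|Y)_f$-orbit. I would take, over each $y$, the lowest point of $Y''$ in $F_y$: declare $(y,i,i)\in W$ iff $(y,i,i)\in Y''$ while $(y,j,j)\notin Y''$ for all $j<i$. Intersecting with each level shows $W\cap L_i$ is a finite Boolean combination of clopen sets, so $W$ is clopen; it meets each $F_y$ in at most one point by construction; and it meets exactly the same fibres as $Y''$. Since an orbit of $(G|Y)_f$ is the union of the fibres over a single $G|Y$-orbit, $W$ (like $Y''$) meets every orbit, i.e.\ $W$ is $(G|Y)_f$-full. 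Write $Z=\{y\in Y\mid W\cap F_y\ne\emptyset\}$, a clopen subset of $Y$.

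Now $(G|Y)_f|W$ has two descriptions. First, $\pi$ restricts to an isomorphism $(G|Y)_f|W\cong G|\pi(W)$, and $\pi(W)$ is a clopen $G$-full subset of $Y'$. Second, because $W$ meets each fibre at most once, $U=\{(y,0,i)\mid(y,i,i)\in W\}$ (with $y\in Y$ regarded as a unit of $G|Y$) is a compact open $(G|Y)_f$-set with $s(U)=W$ and $r(U)=\{(y,0,0)\mid y\in Z\}\subseteq L_0$; conjugating by $U$ gives $(G|Y)_f|W\cong(G|Y)_f|\{(y,0,0)\mid y\in Z\}$, which is canonically $G|Z$. Hence $G|Z\cong G|\pi(W)$ with $Z\subseteq Y$ and $\pi(W)\subseteq Y'$ both $G$-full. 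Since fullness is transitive, $Z$ is $(G|Y)$-full and $\pi(W)$ is $(G|Y')$-full, so $(G|Y)|Z\cong(G|Y')|\pi(W)$ exhibits the Kakutani equivalence of $G|Y$ and $G|Y'$.

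The step I expect to be the main obstacle is the thinning: one must produce a clopen, full subset of $Y''$ that is ``graph-like'' over $L_0$, and the ``lowest point in each fibre'' recipe is exactly what makes it clopen --- using that there are only finitely many levels --- while preserving fullness, which depends only on which fibres are met, not on how. The remaining verifications (that $Y''$ is full, that $U$ is a compact open $G$-set, the two reduction isomorphisms, and the transitivity of fullness) are routine. Note that this argument uses only Lemma~\ref{tower1}; in particular it does not presuppose that Kakutani equivalence is transitive.
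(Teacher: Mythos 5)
Your proposal is correct and follows essentially the same route as the paper: both apply Lemma~\ref{tower1}, select the \emph{minimal} level at which each fibre meets $Y'$ to obtain a clopen graph-like set, and use the resulting compact open $G$-set to identify $G|Z$ with a $G$-full clopen subset of $Y'$. The only difference is bookkeeping --- you work inside $(G|Y)_f$ and push forward by $\pi$, whereas the paper defines $g(z)=\min\{k\mid\pi(z,k,k)\in Y'\}$ and the $G$-set $U=\{\pi(z,g(z),0)\mid z\in Z\}$ directly in $G$.
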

\begin{proof}
By Lemma \ref{tower1}, 
there exists $f\in C(Y,\Z)$ and an isomorphism $\pi:(G|Y)_f\to G$ such that 
$\pi(g,0,0)=g$ for all $g\in G|Y$. 
Define a clopen subset $Z\subset Y$ by 
\[
Z=\{y\in Y\mid\pi(y,k,k)\in Y'\text{ for some }k=0,1,\dots,f(y)\}. 
\]
Since $Y'$ is $G$-full, we can see that $Z$ is $G$-full. 
For each $z\in Z$, we let 
\[
g(z)=\min\{k\in\{0,1,\dots,f(z)\}\mid\pi(z,k,k)\in Y'\}
\]
and 
$U=\{\pi(z,g(z),0)\mid z\in Z\}$. 
Then $g$ is a continuous function on $Z$ and 
$U$ is a compact open $G$-set satisfying $s(U)=Z$ and $r(U)\subset Y'$. 
Clearly $Z'=r(U)$ is $G$-full, and $G|Z$ and $G|Z'$ are isomorphic. 
Hence $G|Y$ and $G|Y'$ are Kakutani equivalent. 
\end{proof}

\begin{lem}\label{Kakutaniequiv}
The Kakutani equivalence is an equivalence relation 
between \'etale groupoids 
whose unit spaces are compact and totally disconnected. 
\end{lem}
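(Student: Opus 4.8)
The plan is to verify the three defining properties of an equivalence relation. Reflexivity is immediate: taking $Y_1 = Y_2 = G^{(0)}$, which is trivially $G$-full and clopen, shows that $G$ is Kakutani equivalent to itself via the identity isomorphism. Symmetry is also immediate from the definition, since the relation is stated symmetrically in the two groupoids: if $Y_i \subset G_i^{(0)}$ are $G_i$-full clopen subsets with an isomorphism $G_1|Y_1 \to G_2|Y_2$, then the inverse isomorphism witnesses that $G_2$ is Kakutani equivalent to $G_1$.

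The substantive point is transitivity. Suppose $G_1$ is Kakutani equivalent to $G_2$ via $G_1|Y_1 \cong G_2|Y_2$, and $G_2$ is Kakutani equivalent to $G_3$ via $G_2|Y_2' \cong G_3|Y_3$, where $Y_2, Y_2' \subset G_2^{(0)}$ are both $G_2$-full clopen subsets. The obstacle is that $Y_2$ and $Y_2'$ need not be comparable, so we cannot directly compose the two isomorphisms. The key step is to repair this using Lemma \ref{tower2}: since $Y_2$ and $Y_2'$ are both $G_2$-full clopen subsets of $G_2^{(0)}$, the groupoids $G_2|Y_2$ and $G_2|Y_2'$ are themselves Kakutani equivalent. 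Concretely, there is a $(G_2|Y_2)$-full clopen $Z \subset Y_2$ and a $(G_2|Y_2')$-full clopen $Z' \subset Y_2'$ with $(G_2|Y_2)|Z \cong (G_2|Y_2')|Z'$, and since the reduction of a reduction is again a reduction, $(G_2|Y_2)|Z = G_2|Z$ and $(G_2|Y_2')|Z' = G_2|Z'$, with $Z, Z'$ clopen in $G_2^{(0)}$. Moreover, a clopen subset of $G_2^{(0)}$ contained in $Y_2$ which is $(G_2|Y_2)$-full is automatically $G_2$-full, because $Y_2$ is $G_2$-full; so $Z$ and $Z'$ are $G_2$-full.

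Now transport $Z$ through the first isomorphism to obtain a clopen $W_1 \subset Y_1 \subset G_1^{(0)}$ with $G_1|W_1 \cong G_2|Z$; since the isomorphism $G_1|Y_1 \cong G_2|Y_2$ carries the $(G_1|Y_1)$-full set $W_1$ to the $(G_2|Y_2)$-full set $Z$, and $Y_1$ is $G_1$-full, $W_1$ is $G_1$-full. Similarly transport $Z'$ through the second isomorphism to a $G_3$-full clopen $W_3 \subset G_3^{(0)}$ with $G_3|W_3 \cong G_2|Z'$. Composing,
\[
G_1|W_1 \;\cong\; G_2|Z \;\cong\; G_2|Z' \;\cong\; G_3|W_3,
\]
which exhibits $G_1$ as Kakutani equivalent to $G_3$ and completes the proof. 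The only care needed is the repeated observation that reductions compose and that $G_i$-fullness is inherited along isomorphisms and along passage from a reduction back to the ambient groupoid; these are routine from the definitions.
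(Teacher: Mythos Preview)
Your proof is correct and follows essentially the same approach as the paper: both reduce transitivity to an application of Lemma \ref{tower2} to the two $G_2$-full clopen sets $Y_2, Y_2'$, and then transport the resulting $Z\subset Y_2$, $Z'\subset Y_2'$ through the given isomorphisms to $G_1$ and $G_3$. You are simply more explicit than the paper about why $(G_2|Y_2)|Z=G_2|Z$ and why fullness is inherited upward from a full reduction and across isomorphisms; the paper leaves these routine checks to the reader.
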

\begin{proof}
It suffices to prove transitivity. 
Let $G_i$ be an \'etale groupoid 
whose unit space is compact and totally disconnected for $i=1,2,3$. 
Suppose that $G_1$ and $G_2$ are Kakutani equivalent and that 
$G_2$ and $G_3$ are Kakutani equivalent. 
We can find clopen subsets $Y_1\subset G_1^{(0)}$, 
$Y_2,Y'_2\subset G_2^{(0)}$ and $Y_3\subset G_3^{(0)}$ such that 
each of them are full, 
$G_1|Y_1$ is isomorphic to $G_2|Y_2$ and 
$G_2|Y'_2$ is isomorphic to $G_3|Y_3$. 
Let $\pi:G_2|Y_2\to G_1|Y_1$ and $\pi':G_2|Y'_2\to G_3|Y_3$ be isomorphisms. 
From Lemma \ref{tower2}, 
there exist $G_2$-full clopen subsets $Z\subset Y_2$ and $Z'\subset Y'_2$ 
such that $G_2|Z$ is isomorphic to $G_2|Z'$. 
Then $G_1|\pi(Z)$ is isomorphic to $G_3|\pi'(Z')$, and so 
$G_1$ and $G_3$ are Kakutani equivalent. 
\end{proof}

\begin{lem}\label{Kakutani=tower}
Let $G_i$ be an \'etale groupoid 
whose unit space is compact and totally disconnected for $i=1,2$. 
The following are equivalent. 
\begin{enumerate}
\item $G_1$ is Kakutani equivalent to $G_2$. 
\item There exist $f_i\in C(G_i^{(0)},\Z)$ such that 
$(G_1)_{f_1}$ is isomorphic to $(G_2)_{f_2}$. 
\end{enumerate}
\end{lem}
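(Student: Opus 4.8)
The plan is to prove the two implications separately, with Lemmas \ref{tower1} and \ref{tower2} doing the real work; I do not expect a serious obstacle.

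I would start with $(2)\Rightarrow(1)$, which is the easier direction. The elementary fact to record first is that for any such $G$ and any $f\in C(G^{(0)},\Z)$ with $f\ge0$, the clopen set $W=\{(x,0,0)\mid x\in G^{(0)}\}$ is $G_f$-full (the element $(x,i,0)$, with $x$ regarded as a unit of $G$, is an arrow of $G_f$ from $(x,i,i)$ to $(x,0,0)$) and the reduction $G_f|W$ equals, on the nose, $\{(g,0,0)\mid g\in G\}$, hence is isomorphic to $G$ via $(g,0,0)\mapsto g$. Consequently, if $(G_1)_{f_1}\cong(G_2)_{f_2}=:H$, then each $G_i$ is isomorphic to a reduction $H|W_i$ for some clopen $H$-full $W_i\subset H^{(0)}$. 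By Lemma \ref{tower2} the groupoids $H|W_1$ and $H|W_2$ are Kakutani equivalent, and since Kakutani equivalence is transitive (Lemma \ref{Kakutaniequiv}) and plainly invariant under isomorphism, $G_1$ and $G_2$ are Kakutani equivalent.

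For $(1)\Rightarrow(2)$, fix $G_i$-full clopen sets $Y_i\subset G_i^{(0)}$ together with an isomorphism $G_1|Y_1\cong G_2|Y_2$, and call this common groupoid $K$. Applying Lemma \ref{tower1} to $G_1$ and to $G_2$ — and pushing the function produced for $G_2$ through the isomorphism $G_2|Y_2\cong K$, using that $G\mapsto G_f$ is natural with respect to isomorphisms — I obtain $h_1,h_2\in C(K^{(0)},\Z)$ with $h_i\ge0$ and isomorphisms $G_i\cong K_{h_i}$. Now set $m=h_1+h_2\in C(K^{(0)},\Z)$, so $m\ge h_i$ for each $i$. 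The key observation is that $K_{h_i}$ already occurs as a reduction of $K_m$: the set $Z_i=\{(x,j,j)\mid x\in K^{(0)},\ 0\le j\le h_i(x)\}$ is clopen in $(K_m)^{(0)}$ (a finite disjoint union of clopen sets, as $h_i$ is a bounded continuous $\Z$-valued function on the compact space $K^{(0)}$), it is $K_m$-full, and $(K_m)|Z_i$ coincides on the nose with $K_{h_i}$; note also that $Z_i=(K_{h_i})^{(0)}$.

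To finish, I would apply Lemma \ref{tower1} a second time, now to the groupoid $K_m$ and its full clopen subset $Z_i$: this yields $f_i\in C(Z_i,\Z)=C((K_{h_i})^{(0)},\Z)$ with $f_i\ge0$ and an isomorphism $(K_{h_i})_{f_i}\cong K_m$. Transporting $f_i$ along $K_{h_i}\cong G_i$ turns it into an element of $C(G_i^{(0)},\Z)$ with $(G_i)_{f_i}\cong K_m$; in particular $(G_1)_{f_1}\cong(G_2)_{f_2}$. The only genuine care points are the verifications flagged in the previous paragraph — that $Z_i$ is clopen (here compactness of $K^{(0)}$ makes the union finite) and $K_m$-full, and that $(K_m)|Z_i$ is literally $K_{h_i}$ as a topological groupoid — but each of these is immediate from the definition of the construction $G\mapsto G_f$.
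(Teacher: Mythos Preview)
Your proof is correct and follows essentially the same route as the paper. The only differences are cosmetic: for $(1)\Rightarrow(2)$ the paper takes $\max(h_1,h_2)$ rather than $h_1+h_2$ as the common upper bound, and instead of invoking Lemma~\ref{tower1} a second time it writes down the required $f_i$ explicitly (placing all of the extra $m-h_i$ levels at height~$0$), which amounts to the same thing.
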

\begin{proof}
(1)$\Rightarrow$(2). 
By Lemma \ref{tower1}, 
we can find an \'etale groupoid $G$ and $g_1,g_2\in C(G^{(0)},\Z)$
such that $G_{g_i}$ is isomorphic to $G_i$. 
Let $\pi:G_i\to G_{g_i}$ be the isomorphism. 
Put $g(x)=\max\{g_1(x),g_2(x)\}$. 
Define $f_i\in C(G_{g_i}^{(0)},\Z)$ by 
\[
f_i(x,k,k)=\begin{cases}
g(x)-g_i(x) & \text{if }k=0 \\
0 & \text{otherwise}. \end{cases}
\]
Let $h_i=f_i\circ(\pi|G_i^{(0)})$. 
It is easy to see that 
$(G_i)_{h_i}$ is isomorphic to $(G_{g_i})_{f_i}$ and 
that $(G_{g_i})_{f_i}$ is isomorphic to $G_g$. 
Therefore we get (2). 

(2)$\Rightarrow$(1). 
For $i=1,2$, 
we let $Y_i=\{(x,0,0)\in(G_i)_{f_i}^{(0)}\mid x\in G_i^{(0)}\}$. 
Then $Y_i$ is $(G_i)_{f_i}$-full and 
$(G_i)_{f_i}|Y_i$ is isomorphic to $G_i$. 
It follows from Lemma \ref{tower2} that 
$G_1$ and $G_2$ are Kakutani equivalent. 
\end{proof}

From the lemma above, one can see that 
Kakutani equivalence is a generalization of 
bounded orbit injection equivalence introduced in \cite[Definition 1.3]{LaO} 
(see also Definition 5 and Theorem 6 of \cite{LiO}). 

\begin{lem}\label{cptKakutani}
Let $G$ be an \'etale groupoid 
whose unit space is compact and totally disconnected. 
The following are equivalent. 
\begin{enumerate}
\item $G$ is principal and compact. 
\item $G$ is Kakutani equivalent to $H$ such that $H=H^{(0)}$. 
\end{enumerate}
\end{lem}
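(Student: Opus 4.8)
The plan is to prove both implications by direct construction, using Lemma \ref{tower1} and Lemma \ref{tower2} to move between a groupoid and its reductions. For the implication (2)$\Rightarrow$(1): suppose $G$ is Kakutani equivalent to some $H$ with $H=H^{(0)}$, so there are $G$-full clopen $Y\subset G^{(0)}$ and $H$-full clopen $Z\subset H^{(0)}$ with $G|Y\cong H|Z$. Since $H=H^{(0)}$, the reduction $H|Z$ equals $Z$ viewed as a unit-space-only (i.e.\ trivial) groupoid, so $G|Y$ is also a trivial groupoid, hence principal and compact (it is just a compact open subset of $G^{(0)}$). I would then use Lemma \ref{tower1}: there is $f\in C(Y,\Z)$ and an isomorphism $(G|Y)_f\cong G$ fixing $G|Y$. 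Since $G|Y$ is trivial, $(G|Y)_f$ is visibly principal (its only composable pairs force $g\in (G|Y)^{(0)}$, $i=j$) and compact (it is a closed subset of the compact space $Y\times\Z\times\Z$, bounded by $\max f$). Therefore $G\cong (G|Y)_f$ is principal and compact.

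For the implication (1)$\Rightarrow$(2): assume $G$ is principal and compact. The idea is that a compact principal \'etale groupoid on a totally disconnected space has uniformly bounded orbit sizes, so one can collapse each orbit to a point. Concretely, since $G$ is compact and open in itself, $r^{-1}(x)$ is finite for each $x$, and by compactness and local compactness one can cover $G$ by finitely many compact open $G$-sets $U_1,\dots,U_n$; the standard disjointification $V_1=U_1$, $V_k=U_k\setminus r^{-1}(r(V_1\cup\cdots\cup V_{k-1}))$ yields finitely many compact open $G$-sets whose ranges partition $G^{(0)}$ (into clopen pieces), and this forces each orbit to have at most $n$ elements. Pick a clopen transversal: on each clopen piece where the orbit structure is constant, choose one representative; assembling these gives a clopen $Y\subset G^{(0)}$ meeting every orbit exactly once, so $Y$ is $G$-full and $G|Y=Y$ is a trivial groupoid. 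Taking $H$ to be $G|Y$ (with $Z=Y$ itself, which is trivially $H$-full) shows $G$ is Kakutani equivalent to $H=H^{(0)}$.

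The main obstacle is the construction of the clopen transversal $Y$ in (1)$\Rightarrow$(2): one must verify that the partition of $G^{(0)}$ by the ranges of the $V_k$ can be refined to a clopen partition on each cell of which the full local structure of $G$ (which $V_k$'s pass through each point, and how they compose) is locally constant, and then that choosing one point per orbit in a coherent way over such a cell can be done continuously, i.e.\ by a clopen section. This is where total disconnectedness is essential and where some care with the bookkeeping of the finitely many $G$-sets is needed; everything else is routine once the transversal is in hand. I expect this argument to parallel closely the transversal construction in the proof of Lemma \ref{tower1}, using the sets $\lambda(y)=\{k\mid y\in s(V_k)\}$ and fixed bijections $\alpha_\lambda$ to make the choices continuous.
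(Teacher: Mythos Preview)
Your argument is correct. The paper's own proof is a one-line citation: ``This is immediate from \cite[Lemma~3.4]{GPS3} and the definition of Kakutani equivalence.'' That external lemma is precisely the statement that a compact principal \'etale groupoid on a totally disconnected space admits a clopen transversal meeting every orbit exactly once, which is exactly what you set out to reconstruct in (1)$\Rightarrow$(2).

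So the difference is one of self-containment. For (2)$\Rightarrow$(1) you give a clean internal argument via Lemma~\ref{tower1}: once $G|Y$ is trivial, $G\cong (G|Y)_f=Y_f$ is manifestly compact (since $f$ is bounded on the compact set $Y$) and principal. This is arguably more informative than the citation. (Your parenthetical about ``composable pairs forcing $i=j$'' is slightly garbled---what you mean is that the isotropy of $Y_f$ consists exactly of the units $(y,i,i)$---but the conclusion is right.) For (1)$\Rightarrow$(2) you correctly isolate the real content: building the clopen transversal. Your sketch via a finite cover by compact open $G$-sets, disjointification, and then a refinement on which the ``orbit pattern'' is locally constant is exactly the standard route; the observation that $x\mapsto\lvert Gx\rvert$ is locally constant (because $r$ is a proper local homeomorphism when $G$ is compact) lets you reduce to the case of constant orbit size, after which the bookkeeping with the sets $\lambda(y)$ and bijections $\alpha_\lambda$ from the proof of Lemma~\ref{tower1} goes through. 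None of this is difficult, but it is not entirely trivial either, and your identification of it as the main obstacle is accurate. The paper simply outsources this step to \cite{GPS3}.
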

\begin{proof}
This is immediate from \cite[Lemma 3.4]{GPS3} and 
the definition of Kakutani equivalence. 
\end{proof}

%%%%%%%%%%%%%%%%%%%%%%%%%%%%%%%%%%%%%%%%
\subsection{Examples of homology groups}

Next, we turn to the consideration of homology of 
an \'etale groupoid $G$ 
whose unit space is compact and totally disconnected. 

\begin{thm}\label{Kakutani>sim}
Let $G_i$ be an \'etale groupoid 
whose unit space is compact and totally disconnected for $i=1,2$. 
If $G_1$ and $G_2$ are Kakutani equivalent, 
then $G_1$ is homologically similar to $G_2$. 
In particular, $H_n(G_1,A)$ is isomorphic to $H_n(G_2,A)$ 
for any topological abelian group $A$. 
Moreover, there exists an isomorphism $\pi:H_0(G_1)\to H_0(G_2)$ 
such that $\pi(H_0(G_1)^+)=H_0(G_2)^+$. 
\end{thm}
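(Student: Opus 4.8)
The statement reduces to two things: (i) Kakutani equivalence implies homological similarity, and (ii) under the resulting isomorphism on $H_0$, the positive cone $H_0(G_1)^+$ goes onto $H_0(G_2)^+$. By Lemma \ref{Kakutani=tower} it is enough to treat the case $G_2 = (G_1)_f$ for some $f \in C(G_1^{(0)},\Z)$ with $f \geq 0$, because an isomorphism of \'etale groupoids trivially gives a homological similarity preserving the positive cone, and homological similarity is transitive (compose the $\rho$'s and $\sigma$'s, and compose the intertwining maps $\theta$ using the groupoid multiplication). So I would first prove: for any such $f$, the groupoid $G_f$ is homologically similar to $G$, with the $H_0$-isomorphism carrying positive cone to positive cone.

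\emph{Homological similarity of $G$ and $G_f$.} Recall from the construction before Lemma \ref{tower1} that $Y = \{(x,0,0) : x \in G^{(0)}\}$ is a $G_f$-full clopen subset and that $G_f|Y$ is canonically isomorphic to $G$ via $(g,0,0) \mapsto g$. So it suffices to show $G_f$ is homologically similar to $G_f|Y$; for this I would invoke Theorem \ref{reduction}(2) directly — $G_f^{(0)} = \{(x,i,i)\}$ is compact (hence $\sigma$-compact) and totally disconnected, and $Y$ is an open $G_f$-full subset — which immediately yields that $G_f$ is homologically similar to $G_f|Y \cong G$. (Alternatively one can write down the map $\theta\colon G_f^{(0)} \to G_f$, $\theta(x,i,i) = (x,i,0)$, and apply Theorem \ref{reduction}(1), but appealing to part (2) avoids even that.) Combined with the reduction above, this gives the required homological similarity $G_1 \sim G_2$, and Proposition \ref{homosim}(2) then gives the isomorphisms $H_n(G_1,A) \cong H_n(G_2,A)$ for every topological abelian group $A$, and moreover an $H_0$-isomorphism $\pi$ with $\pi(H_0(G_1)^+) = H_0(G_2)^+$.

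\emph{Tracking the positive cone.} This is where I expect to have to be slightly careful rather than merely invoke a black box. The cone statement is already packaged into Proposition \ref{homosim}(2) for homological similarities, so in principle nothing more is needed; but one should check that the chain of reductions respects it. The only non-formal point is that the isomorphism $H_0(G_f|Y) \to H_0(G_f)$ coming from Theorem \ref{reduction} is induced (up to the identification of $G_f|Y$ with $G$) by the \emph{inclusion} $C(Y,\Z) \hookrightarrow C(G_f^{(0)},\Z)$ at the chain level — more precisely by $\sigma^{(0)}_*$ where $\sigma$ is the inclusion homomorphism — and an inclusion of clopen subsets sends nonnegative functions to nonnegative functions, so it carries $H_0(G_f|Y)^+$ into $H_0(G_f)^+$; surjectivity onto the cone then follows because $\pi$ is an isomorphism and $H_0^+$ generates $H_0$ together with its negative, or more simply because the inverse map (averaging down the tower via $\theta$) is also visibly positivity-preserving. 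Assembling: the composite isomorphism $H_0(G_1) \cong H_0(G_1|Y_1) \cong H_0(G_2|Y_2) \cong H_0(G_2)$ is positivity-preserving in each factor, hence $\pi(H_0(G_1)^+) = H_0(G_2)^+$.

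\emph{Main obstacle.} There is no serious obstacle; the theorem is essentially a bookkeeping corollary of Theorem \ref{reduction}, Lemma \ref{Kakutani=tower}, and Proposition \ref{homosim}. The only thing that takes a moment is verifying transitivity of homological similarity (composing the intertwiners $\theta_1, \theta_2$ into a single continuous $G_1^{(0)} \to G_3$ using groupoid multiplication and checking the cocycle identity) and confirming that each step of the composition keeps the $H_0$-positive cone where it should be, as above.
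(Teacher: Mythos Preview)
Your proof is correct and rests on the same two ingredients as the paper's one-line proof: Theorem \ref{reduction} and Proposition \ref{homosim}. The paper is slightly more direct, however: instead of passing through Lemma \ref{Kakutani=tower} and the tower $G_f$, it simply applies Theorem \ref{reduction}(2) to the full clopen subsets $Y_i$ coming straight from the definition of Kakutani equivalence, giving $G_1 \sim G_1|Y_1 \cong G_2|Y_2 \sim G_2$; your extra paragraph on tracking the positive cone is also unnecessary, since Proposition \ref{homosim}(2) already packages that conclusion.
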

\begin{proof}
This follows from the definition of Kakutani equivalence, 
Theorem \ref{reduction} and Proposition \ref{homosim}. 
\end{proof}

\begin{lem}\label{cptHomology}
Let $G$ be a compact \'etale principal groupoid 
whose unit space is compact and totally disconnected. 
Let $A$ be a topological abelian group. 
Then $H_n(G,A)=0$ for $n\geq1$. 
\end{lem}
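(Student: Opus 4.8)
The plan is to reduce immediately to the simplest possible case. Since $G$ is compact, principal, and $G^{(0)}$ is compact and totally disconnected, Lemma~\ref{cptKakutani} tells us that $G$ is Kakutani equivalent to a groupoid $H$ with $H=H^{(0)}$, i.e.\ to the trivial (co-unit) groupoid on some Cantor-like space $Y$. By Theorem~\ref{Kakutani>sim}, Kakutani equivalent groupoids are homologically similar, so Proposition~\ref{homosim}(2) gives $H_n(G,A)\cong H_n(H,A)$ for every $n$ and every topological abelian group $A$. Thus it suffices to compute $H_n(H,A)$ when $H=H^{(0)}=Y$.

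\smallskip

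\emph{Computing the homology of a space viewed as a groupoid.} For the trivial groupoid $H=Y$, every arrow is a unit, so $H^{(n)}\cong Y$ for all $n$, and each face map $d_i:H^{(n)}\to H^{(n-1)}$ is just the identity map $Y\to Y$ under these identifications; in particular each $d_{i*}=\id$ on $C_c(Y,A)=C(Y,A)$. Hence $\delta_n=\sum_{i=0}^n(-1)^i\,d_{i*}$ equals $0$ when $n$ is odd and equals $\id$ when $n\geq 2$ is even, while $\delta_1=s_*-r_*=\id-\id=0$. The resulting chain complex therefore looks like $\cdots\to C(Y,A)\xrightarrow{\ \id\ }C(Y,A)\xrightarrow{\ 0\ }C(Y,A)\xrightarrow{\ \id\ }C(Y,A)\xrightarrow{\ 0\ }C(Y,A)\to 0$, which has trivial homology in every degree $n\geq 1$ (and $H_0=C(Y,A)$). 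So $H_n(H,A)=0$ for $n\geq 1$, and transporting back through the homological similarity isomorphism yields $H_n(G,A)=0$ for $n\geq 1$, as claimed.

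\smallskip

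\textbf{Alternative, if one prefers to avoid Lemma~\ref{cptKakutani}.} One could instead argue directly: a compact principal étale groupoid on a totally disconnected space is, by \cite[Lemma 3.4]{GPS3}, a ``compact étale equivalence relation'' which decomposes $G^{(0)}$ into finitely many clopen pieces on each of which $G$ restricts to a product $Y_k\times F_k\times F_k$ with $F_k$ finite; homology is additive over this clopen partition, and for a transitive finite equivalence relation $F\times F$ over a base $Y$ one checks the chain complex is chain-homotopy-equivalent to that of the trivial groupoid $Y$ (contract the ``extra'' coordinates). I expect this to be essentially the computation hidden inside Lemma~\ref{cptKakutani} anyway, so the first route is cleaner.

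\smallskip

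\emph{Anticipated obstacle.} There is no serious obstacle: the content is entirely carried by the previously established machinery (Lemma~\ref{cptKakutani}, Theorem~\ref{Kakutani>sim}, Proposition~\ref{homosim}), and the residual verification—that the chain complex of a space-as-groupoid is acyclic in positive degrees—is a one-line alternating-sum cancellation. The only point requiring a moment's care is confirming that under the identifications $H^{(n)}\cong Y$ \emph{all} face maps genuinely become the identity (so that the alternating sum telescopes correctly), but this is immediate from the fact that $H=H^{(0)}$ forces every $g_i$ to be a unit.
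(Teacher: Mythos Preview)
Your proof is correct and follows exactly the same route as the paper: reduce via Lemma~\ref{cptKakutani} and Theorem~\ref{Kakutani>sim} to the trivial groupoid, then observe that the chain complex has vanishing homology in positive degrees. The paper compresses this into a single sentence (``This follows from Lemma~\ref{cptKakutani}, Theorem~\ref{Kakutani>sim} and the definition of homology groups''), so your write-up is simply a more detailed version of the same argument.
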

\begin{proof}
This follows from Lemma \ref{cptKakutani}, Theorem \ref{Kakutani>sim} 
and the definition of homology groups. 
\end{proof}

As for AF groupoids (i.e. AF equivalence relations), 
we have the following. 
For the definition of dimension groups, 
we refer the reader to \cite[Section 1.4]{Ro}. 

\begin{thm}\label{AFHomology1}
\begin{enumerate}
\item For an AF groupoid $G$, 
there exists an isomorphism $\pi:H_0(G)\to K_0(C^*_r(G))$ such that 
$\pi(H_0(G)^+)=K_0(C^*_r(G))^+$ and $\pi([1_{G^{(0)}}])=[1_{C^*_r(G)}]$. 
In particular, the triple $(H_0(G),H_0(G)^+,[1_{G^{(0)}}])$ is 
a dimension group with a distinguished order unit. 
\item Two AF groupoids $G_1$ and $G_2$ are isomorphic if and only if 
there exists an isomorphism $\pi:H_0(G_1)\to H_0(G_2)$ such that 
$\pi(H_0(G_1)^+)=H_0(G_2)^+$ and $\pi([1_{G_1^{(0)}}])=[1_{G_2^{(0)}}]$. 
\end{enumerate}
\end{thm}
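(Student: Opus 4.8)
The plan is to establish (1) first and then derive (2) from Krieger's theorem together with the classification of AF algebras by their $K_0$. For (1), I would proceed through a sequence of reductions exploiting the machinery already in place. Since $G$ is an AF groupoid, write $G=\bigcup_n K_n$ as an increasing union of elementary subgroupoids $K_n$, with $K_n^{(0)}=G^{(0)}$ compact and totally disconnected. The first step is to treat the elementary case: each $K_n$ is a compact principal \'etale groupoid on a totally disconnected compact space, so by Lemma \ref{cptHomology} we have $H_i(K_n)=0$ for $i\geq1$, while $H_0(K_n)=C(G^{(0)},\Z)/\Ima\delta_1$ is precisely the $K_0$-group of the finite-dimensional $C^*$-algebra $C^*_r(K_n)$, and the cone $H_0(K_n)^+$ matches $K_0(C^*_r(K_n))^+$ since a function supported on an open $K_n$-set witnesses the vanishing of a difference class exactly when the corresponding projections are Murray--von Neumann equivalent. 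The second step is to pass to the limit: the chain complexes $C_c(K_n^{(\bullet)},\Z)$ sit compatibly inside $C_c(G^{(\bullet)},\Z)$ via the inclusion homomorphisms (which are \'etale), and since every compact subset of $G$ lies in some $K_n$, the complex for $G$ is the direct limit of the complexes for the $K_n$. Because homology commutes with direct limits, $H_i(G)=\varinjlim H_i(K_n)$, giving $H_i(G)=0$ for $i\geq1$ and $H_0(G)=\varinjlim K_0(C^*_r(K_n))$; the connecting maps on the $C^*$-side are induced by the inclusions $C^*_r(K_n)\hookrightarrow C^*_r(K_{n+1})$, and one checks these agree with the maps induced by $\delta_1$-classes. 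On the $C^*$-algebra side, $C^*_r(G)=\varinjlim C^*_r(K_n)$ is an AF algebra and $K_0$ commutes with inductive limits, so $K_0(C^*_r(G))=\varinjlim K_0(C^*_r(K_n))$ with the same connecting maps. Matching the two limits, including the positive cones and the order unit $[1_{G^{(0)}}]\leftrightarrow[1_{C^*_r(G)}]$ (which is compatible at every finite stage), yields the isomorphism $\pi$. That $(H_0(G),H_0(G)^+,[1_{G^{(0)}}])$ is a dimension group then follows because it is an inductive limit of the ordered $K_0$-groups of finite-dimensional algebras, i.e.\ of simplicial groups, which is one standard characterization of dimension groups; alternatively invoke the Effros--Handelman--Shen theorem on the $C^*$ side.

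For (2), the ``only if'' direction is immediate: an isomorphism $G_1\cong G_2$ induces an \'etale isomorphism of chain complexes carrying $[1_{G_1^{(0)}}]$ to $[1_{G_2^{(0)}}]$ and $H_0(G_1)^+$ onto $H_0(G_2)^+$. For the ``if'' direction, suppose $\pi:H_0(G_1)\to H_0(G_2)$ is an isomorphism of ordered groups preserving the order units. Transporting along the isomorphisms of part (1), $\pi$ becomes an isomorphism $K_0(C^*_r(G_1))\to K_0(C^*_r(G_2))$ of ordered abelian groups with distinguished order units. Since $C^*_r(G_1)$ and $C^*_r(G_2)$ are AF algebras, Elliott's classification of AF algebras shows $C^*_r(G_1)\cong C^*_r(G_2)$; then Krieger's theorem (quoted in the Preliminaries) gives $G_1\cong G_2$ as \'etale groupoids. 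Alternatively, one can invoke Krieger's theorem directly in its stated form, which already takes $K_0$ as input.

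The main obstacle I anticipate is the bookkeeping in the second step of (1): verifying carefully that the direct-limit structure on the groupoid homology side is genuinely compatible with the inductive-limit structure on $K_0$ of the $C^*$-algebras, i.e.\ that the identification $H_0(K_n)\cong K_0(C^*_r(K_n))$ is natural with respect to the inclusions $K_n\hookrightarrow K_{n+1}$. Concretely, one must confirm that the map $H_0(K_n)\to H_0(K_{n+1})$ induced by the inclusion of chain complexes corresponds, under the finite-stage identifications, to $K_0$ of the inclusion $C^*_r(K_n)\hookrightarrow C^*_r(K_{n+1})$; this amounts to tracking how a clopen subset of $G^{(0)}$, viewed as a projection in $C(G^{(0)})$, is partitioned into $K_{n+1}$-orbit representatives, and seeing that the induced identification of $\Z$-valued functions modulo coboundaries is exactly the Bratteli-diagram map. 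Once this naturality is pinned down, everything else is a routine passage to the limit.
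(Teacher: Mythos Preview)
Your overall strategy---establish the identification $H_0\cong K_0$ at the level of each elementary subgroupoid and then pass to the direct limit---is the standard argument behind the references the paper cites (the paper's own proof consists solely of pointers to \cite{R1,HPS,GPS1,GPS3} and \cite{Kr}), so in that sense you are on the same track. Two points need correction, however.

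First, $C^*_r(K_n)$ is \emph{not} finite-dimensional in general. An elementary subgroupoid $K_n$ is compact, open and principal, but its unit space $G^{(0)}$ may be a Cantor set; the orbit space $G^{(0)}/K_n$ is then compact totally disconnected and typically infinite, and $C^*_r(K_n)$ is a direct sum (over the clopen pieces where the orbit size is constant) of algebras of the form $M_m\bigl(C(Y)\bigr)$ with $Y$ totally disconnected. So $K_0(C^*_r(K_n))$ is not a simplicial group $\Z^k$ but rather a direct sum of copies of $C(Y,\Z)$. This does not break your argument---the identification $H_0(K_n)\cong K_0(C^*_r(K_n))$ as ordered groups with unit still holds and is still elementary to check via Lemma~\ref{cptKakutani} and Lemma~\ref{tower1}---but you should not describe $C^*_r(K_n)$ as finite-dimensional, and your closing sentence about ``inductive limits of simplicial groups'' needs to be replaced by simply invoking that $C^*_r(G)$ is AF.

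Second, in part~(2) your detour through Elliott's theorem is not quite sound: Elliott gives an isomorphism $C^*_r(G_1)\cong C^*_r(G_2)$ of $C^*$-algebras, but there is no reason this isomorphism carries $C(G_1^{(0)})$ onto $C(G_2^{(0)})$, which is what you would need to invoke Theorem~\ref{isomC*}. Your alternative route---apply Krieger's theorem directly, since as quoted in the Preliminaries it already takes the ordered $K_0$ with unit as input---is the correct one, and is exactly what the paper does.
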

\begin{proof}
The first statement follows from \cite{R1,HPS,GPS1,GPS3}. 
The second statement was proved in \cite{Kr}. 
\end{proof}

\begin{thm}\label{AFHomology2}
Let $G$ be an AF groupoid and 
let $A$ be a topological abelian group. 
Then $H_n(G,A)=0$ for $n\geq1$. 
\end{thm}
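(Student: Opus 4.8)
The plan is to reduce the statement to Lemma \ref{cptHomology} via the defining structure of an AF groupoid, together with the fact that groupoid homology is compatible with increasing unions of compact open subgroupoids. Write $G=\bigcup_{k=1}^\infty K_k$ as an increasing union of elementary subgroupoids. Each $K_k$ is a compact open principal subgroupoid with $K_k^{(0)}=G^{(0)}$, hence a compact \'etale principal groupoid whose unit space is compact and totally disconnected, so Lemma \ref{cptHomology} gives $H_n(K_k,A)=0$ for all $n\geq1$. It therefore suffices to show that $H_n(G,A)$ is the direct limit of the groups $H_n(K_k,A)$ along the inclusions.

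First I would note that each inclusion $\iota_k\colon K_k\hookrightarrow G$ is an open embedding, hence an \'etale homomorphism, and that the induced chain maps are compatible with the inclusions $K_k\subset K_{k+1}$. Next I would identify the chain complex of $G$ with the direct limit of those of the $K_k$: since $K_k$ is open in $G$, the set $K_k^{(n)}$ is open in $G^{(n)}$, and since $K_k$ is compact, $K_k^{(n)}$ is a compact (hence closed) subset of $G^{(n)}$; moreover $\{K_k^{(n)}\}_k$ is an increasing open cover of $G^{(n)}$. Consequently, any $f\in C_c(G^{(n)},A)$ has compact support contained in some $K_k^{(n)}$ and thus lies in the image of the extension-by-zero map $\iota_{k*}^{(n)}\colon C(K_k^{(n)},A)\to C_c(G^{(n)},A)$; these maps are injective, so $C_c(G^{(n)},A)=\bigcup_k\iota_{k*}^{(n)}\big(C(K_k^{(n)},A)\big)$, an increasing union of subgroups, and this identification is compatible with the boundary operators $\delta_n$ because the $\iota_k$ are groupoid homomorphisms.

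Finally, since direct limits are exact in the category of abelian groups, taking homology commutes with this increasing union, so $H_n(G,A)\cong\varinjlim_k H_n(K_k,A)=\varinjlim_k 0=0$ for all $n\geq1$. I do not expect a genuine obstacle here; the only mildly technical point is the verification that $C_c(G^{(n)},A)$ is exactly the increasing union of the subgroups $\iota_{k*}^{(n)}(C(K_k^{(n)},A))$ and that this is an isomorphism of chain complexes, but this is routine once one observes that the $K_k^{(n)}$ are compact open subsets covering $G^{(n)}$ increasingly.
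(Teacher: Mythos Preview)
Your proposal is correct and follows essentially the same approach as the paper: both use that any $f\in C_c(G^{(n)},A)$ is supported in some $K_k^{(n)}$ and then invoke Lemma \ref{cptHomology}. The paper phrases this more directly---given a cycle $f$, find $K$ with $f\in C_c(K^{(n)},A)$, then $f$ is a boundary in $K$ and hence in $G$---whereas you package the same observation as an isomorphism of chain complexes with a direct limit and use exactness of $\varinjlim$; the content is identical.
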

\begin{proof}
The AF groupoid $G$ is an increasing union of elementary subgroupoids. 
For any $f\in C_c(G^{(n)},A)$, 
there exists an elementary subgroupoid $K\subset G$ such that 
$f\in C_c(K^{(n)},A)$. 
By Lemma \ref{cptHomology}, $H_n(K,A)=0$ for $n\geq1$. 
Therefore $H_n(G,A)=0$ for $n\geq1$. 
\end{proof}

\begin{rem}
Let $G$ be an AF groupoid and 
let $A$ be a topological abelian group. 
It is known that 
the cohomology group $H^n(G,A)$ with constant coefficients is zero 
for every $n\geq2$ (\cite[III.1.3]{R1}). 
Clearly $H^0(G,A)$ is equal to 
the set of continuous functions $f\in C(G^{(0)},A)$ satisfying 
$f(r(g))=f(s(g))$ for all $g\in G$. 
In particular, when $G$ is minimal, $H^0(G,A)$ is isomorphic to $A$. 
When $G$ is minimal and $A=\Z$, 
one can see that $H^1(G,\Z)$ is always uncountable. 
\end{rem}

We now turn to a computation of homology groups 
of \'etale groupoids arising from subshifts of finite type. 
We refer the reader to \cite{A,R2} 
for more details about these groupoids. 
Let $\sigma$ be a one-sided subshift of finite type 
on a compact totally disconnected space $X$. 
We assume that $\sigma$ is surjective. 
The \'etale groupoid $G$ associated with $\sigma$ is given by 
\[
G=\{(x,n,y)\in X\times\Z\times X\mid
\exists k,l\in\N,\ n=k{-}l,\ \sigma^k(x)=\sigma^l(y)\}. 
\]
Two elements $(x,n,y)$ and $(x',n',y')$ in $G$ are composable 
if and only if $y=x'$, and the multiplication and the inverse are 
\[
(x,n,y)\cdot(y,n',y')=(x,n{+}n',y'),\quad (x,n,y)^{-1}=(y,-n,x). 
\]
The \'etale groupoid $G$ has 
an open subgroupoid $H=\{(x,0,y)\in G\}$. 
It is well-known that 
$C^*_r(G)$ is isomorphic to the Cuntz-Krieger algebra 
introduced in \cite{CK} 
and that $H$ is an AF groupoid. 
Moreover, there exists an automorphism $\pi$ of $K_0(C^*_r(H))$ such that 
$K_0(C^*_r(G))\cong\Coker(\id-\pi)$ and $K_1(C^*_r(G))\cong\ker(\id-\pi)$. 
It is also well-known that 
$K_0(C^*_r(G))$ is a finitely generated abelian group and 
$K_1(C^*_r(G))$ is isomorphic to the torsion-free part of $K_0(C^*_r(G))$. 

The map $\rho:(x,n,y)\mapsto n$ is a homomorphism from $G$ to $\Z$. 
We consider the skew product $G\times_\rho\Z$ and 
set $Y=G^{(0)}\times\{0\}\subset(G\times_\rho\Z)^{(0)}$. 

\begin{lem}
In the setting above, 
$G\times_\rho\Z$ is homologically similar to $H$. 
\end{lem}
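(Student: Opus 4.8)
The plan is to exhibit $H$ as the reduction of $G\times_\rho\Z$ to a full clopen subset and then apply Theorem \ref{reduction}. The natural candidate is $Y=G^{(0)}\times\{0\}$, which is clopen in $(G\times_\rho\Z)^{(0)}=G^{(0)}\times\Z$. Inspecting the groupoid structure of the skew product, the element $((x,n,y),m)$ has range $(x,m)$ and source $(y,m+n)$; consequently $(G\times_\rho\Z)|Y$ consists precisely of the triples $((x,0,y),0)$ with $(x,0,y)\in G$, i.e. with $(x,0,y)\in H$, and the map $((x,0,y),0)\mapsto(x,0,y)$ is a topological groupoid isomorphism of $(G\times_\rho\Z)|Y$ onto $H$. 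So the whole statement reduces to checking that $Y$ is $(G\times_\rho\Z)$-full.

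To verify fullness, fix $(x,m)\in G^{(0)}\times\Z$; I need an arrow with range $(x,m)$ and source in $Y$, which by the computation above means an arrow of the form $((x,-m,y),m)$ with $(x,-m,y)\in G$. If $m\le0$, take $y=\sigma^{-m}(x)$, so that $\sigma^{-m}(x)=\sigma^0(y)$ witnesses $(x,-m,y)\in G$; if $m>0$, use that $\sigma$, hence $\sigma^m$, is surjective to pick $y$ with $\sigma^m(y)=x$, so that $\sigma^0(x)=\sigma^m(y)$ witnesses $(x,-m,y)\in G$. Either way the corresponding arrow has source $(y,0)\in Y$, so $Y$ is full.

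With $Y$ shown to be open and $(G\times_\rho\Z)$-full, and since $(G\times_\rho\Z)^{(0)}=G^{(0)}\times\Z$ is plainly $\sigma$-compact and totally disconnected, Theorem \ref{reduction}(2) gives that $G\times_\rho\Z$ is homologically similar to $(G\times_\rho\Z)|Y$, which is isomorphic to $H$ by the previous paragraph. I do not anticipate a genuine obstacle here: the only place a hypothesis is really used is the fullness check, and there it is exactly the standing assumption that $\sigma$ is surjective that makes the case $m>0$ go through.
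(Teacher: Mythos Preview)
Your proof is correct and follows exactly the paper's approach: identify $(G\times_\rho\Z)|Y$ with $H$, verify that $Y=G^{(0)}\times\{0\}$ is $(G\times_\rho\Z)$-full (using surjectivity of $\sigma$ for the $m>0$ case), and invoke Theorem~\ref{reduction}(2). The paper's own proof merely asserts these steps without detail, so your write-up is a faithful expansion of it.
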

\begin{proof}
It is easy to see that $Y$ is $(G\times_\rho\Z)$-full. 
In addition, $(G\times_\rho\Z)|Y$ is canonically isomorphic to $H$. 
By Theorem \ref{reduction} (2), we get the conclusion. 
\end{proof}

\begin{thm}\label{CKHomology}
When $G$ is the \'etale groupoid arising from a subshift of finite type, 
$H_0(G)\cong K_0(C^*_r(G))$, $H_1(G)\cong K_1(C^*_r(G))$ and 
$H_n(G)=0$ for all $n\geq2$. 
\end{thm}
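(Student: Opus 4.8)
The plan is to combine the Lindon--Hochschild--Serre spectral sequence from Theorem \ref{LHS} with the homology computations already available for the AF groupoid $H$. Since $\rho\colon G\to\Z$ is a homomorphism, part (1) of Theorem \ref{LHS} gives a spectral sequence
\[
E^2_{p,q}=H_p\bigl(\Z,H_q(G\times_\rho\Z)\bigr)\Rightarrow H_{p+q}(G),
\]
where $\Z$ acts on $H_q(G\times_\rho\Z)$ via $\hat\rho$. By the lemma immediately preceding the theorem, $G\times_\rho\Z$ is homologically similar to the AF groupoid $H$, so Proposition \ref{homosim} and Theorem \ref{AFHomology2} give $H_0(G\times_\rho\Z)\cong H_0(H)$ and $H_q(G\times_\rho\Z)=0$ for all $q\geq1$. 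Hence the $E^2$-page is concentrated in the row $q=0$, and since $\Z$ has cohomological dimension $1$ only the columns $p=0,1$ survive. The spectral sequence therefore degenerates, yielding $H_0(G)\cong H_0(\Z,H_0(H))=\Coker(\id-\pi)$, $H_1(G)\cong H_1(\Z,H_0(H))=\ker(\id-\pi)$, and $H_n(G)=0$ for $n\geq2$, where $\pi$ is the automorphism of $H_0(H)$ induced by $\hat\rho^1$ (equivalently, by the shift).

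First I would identify $H_0(H)$ with $K_0(C^*_r(H))$ using Theorem \ref{AFHomology1}(1), since $H$ is an AF groupoid. Next I would check that the automorphism $\pi$ of $H_0(H)$ coming from the $\Z$-action $\hat\rho$ corresponds, under this identification, to the automorphism of $K_0(C^*_r(H))$ that appears in the standard Pimsner--Voiculescu description of the Cuntz--Krieger $K$-theory; this is the naturality statement recalled in the paragraph before the theorem, namely $K_0(C^*_r(G))\cong\Coker(\id-\pi)$ and $K_1(C^*_r(G))\cong\ker(\id-\pi)$. Granting that compatibility, the degeneration of the spectral sequence immediately gives $H_0(G)\cong K_0(C^*_r(G))$ and $H_1(G)\cong K_1(C^*_r(G))$, together with the vanishing of $H_n(G)$ for $n\geq2$.

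The main obstacle I expect is verifying that the two automorphisms of $H_0(H)\cong K_0(C^*_r(H))$ genuinely agree — that the action of $\Z$ on the $0$-th homology of the skew product induced by $\hat\rho$ matches the shift-induced automorphism used on the $K$-theory side. This is essentially a bookkeeping matter: one unwinds the canonical isomorphism $(G\times_\rho\Z)|Y\cong H$, checks how $\hat\rho^1$ permutes the ``levels'' of the skew product, and compares with how the one-sided shift $\sigma$ acts on $C(X,\Z)$ modulo the AF coboundaries. It is routine but requires care to line up conventions; everything else (the spectral sequence, the AF vanishing, the cohomological dimension of $\Z$) is already in place from the earlier sections.
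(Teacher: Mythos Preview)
Your proposal is correct and follows essentially the same route as the paper: apply the Lindon--Hochschild--Serre spectral sequence of Theorem \ref{LHS}(1), use the homological similarity of $G\times_\rho\Z$ with the AF groupoid $H$ together with Theorems \ref{AFHomology1} and \ref{AFHomology2} to collapse the $E^2$-page to the row $q=0$, and then identify the resulting coinvariants and invariants of the $\Z$-action with $K_0(C^*_r(G))$ and $K_1(C^*_r(G))$. The only point you flag that the paper treats more tersely is the identification of the $\hat\rho$-induced automorphism with the shift automorphism $\pi$ on $K_0(C^*_r(H))$; the paper simply asserts this, so your caution there is appropriate but not a departure from the argument.
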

\begin{proof}
It follows from Theorem \ref{LHS} (1) that 
there exists a spectral sequence: 
\[
E^2_{p,q}=H_p(\Z,H_q(G\times_\rho\Z))\Rightarrow
H_{p+q}(G). 
\]
By the lemma above and Proposition \ref{homosim}, 
$H_q(G\times_\rho\Z)$ is isomorphic to $H_q(H)$. 
This, together with Theorem \ref{AFHomology1} and Theorem \ref{AFHomology2}, 
implies 
\[
H_q(G\times_\rho\Z)\cong\begin{cases}K_0(C^*_r(H)) & q=0 \\
0 & q\geq1. \end{cases}
\]
Besides, 
the $\Z$-module structure on $H_0(G\times_\rho\Z)\cong K_0(C^*_r(H))$ is 
given by the automorphism $\pi$. 
Hence one has 
\[
H_0(G)\cong H_0(\Z,K_0(C^*_r(H)))\cong\Coker(\id-\pi)\cong K_0(C^*_r(G)), 
\]
\[
H_1(G)\cong H_1(\Z,K_0(C^*_r(H)))\cong\ker(\id-\pi)\cong K_1(C^*_r(G))
\]
and $H_n(G)=0$ for $n\geq2$. 
\end{proof}

%%%%%%%%%%%%%%%%%%%%%%%%%%%%%%%%%%%%%%%%%%%%%%%%%%%%%%%%%%%%
\section{Kakutani equivalence and $C^*$-algebras}

In this section, 
we give a $C^*$-algebraic characterization of Kakutani equivalence. 
For an \'etale groupoid $G$, 
we denote the reduced groupoid $C^*$-algebra of $G$ by $C^*_r(G)$ and 
identify $C_0(G^{(0)})$ with a subalgebra of $C^*_r(G)$. 
The following is an immediate consequence of (a special case of) 
Proposition 4.11 of \cite{R3}. 

\begin{thm}\label{isomC*}
For $i=1,2$, 
let $G_i$ be an \'etale essentially principal second countable groupoid. 
The following are equivalent. 
\begin{enumerate}
\item $G_1$ and $G_2$ are isomorphic. 
\item There exists an isomorphism $\pi:C^*_r(G_1)\to C^*_r(G_2)$ 
such that $\pi(C_0(G_1^{(0)}))=C_0(G_2^{(0)})$. 
\end{enumerate}
\end{thm}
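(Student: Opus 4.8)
The plan is to treat the two implications separately, with essentially all of the content in (2)$\Rightarrow$(1), for which I would quote Renault's reconstruction theorem.

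First I would dispose of (1)$\Rightarrow$(2), which is routine. Given a groupoid isomorphism $\psi\colon G_1\to G_2$ (a homeomorphism respecting the algebraic operations), its restriction to $G_1^{(0)}$ is a homeomorphism onto $G_2^{(0)}$, and $f\mapsto f\circ\psi^{-1}$ carries $C_c(G_1)$ onto $C_c(G_2)$ as a $*$-algebra isomorphism for the convolution products. Since $\psi$ intertwines the left regular representations based at corresponding units, this map is isometric for the reduced norms and extends to an isomorphism $\pi\colon C^*_r(G_1)\to C^*_r(G_2)$; by construction $\pi$ sends $C_0(G_1^{(0)})$ onto $C_0(G_2^{(0)})$.

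For (2)$\Rightarrow$(1), the key point is that, for a second countable \'etale groupoid $G$ that is essentially principal, the pair $(C^*_r(G),C_0(G^{(0)}))$ is a Cartan pair in the sense of \cite{R3}. Indeed, essential principality together with second countability forces the set of units with trivial isotropy to be dense (\cite[Proposition 3.1]{R3}), which is exactly the topological principality needed for $C_0(G^{(0)})$ to be maximal abelian and regular in $C^*_r(G)$, while the canonical conditional expectation onto $C_0(G^{(0)})$ is faithful precisely because one works with the reduced $C^*$-algebra. Renault's reconstruction theorem (\cite[Proposition 4.11]{R3}, specialized to the untwisted case) then identifies $G$, up to isomorphism of \'etale groupoids, with the Weyl groupoid of this Cartan pair, and this identification is functorial in the pair. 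Hence an isomorphism $\pi$ as in (2) induces an isomorphism of Cartan pairs, hence an isomorphism of their Weyl groupoids, hence $G_1\cong G_2$.

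The only genuinely delicate point is checking that the hypotheses of \cite[Proposition 4.11]{R3} are met in the present setting: that ``essentially principal'' as used here (the interior of the isotropy bundle equals $G^{(0)}$) coincides with Renault's topological principality under second countability, and that passing to the reduced rather than the full groupoid $C^*$-algebra is what makes the conditional expectation faithful. Once these identifications are recorded, the theorem is, as claimed, an immediate consequence of \cite{R3}.
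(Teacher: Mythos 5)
Your proposal is correct and follows exactly the route the paper takes: the paper simply records the theorem as an immediate consequence of \cite[Proposition 4.11]{R3}, and your argument spells out the same reduction (essential principality plus second countability giving topological principality via \cite[Proposition 3.1]{R3}, so that $(C^*_r(G),C_0(G^{(0)}))$ is a Cartan pair whose Weyl groupoid recovers $G$). The extra detail you supply for (1)$\Rightarrow$(2) and for verifying the hypotheses of Renault's theorem is accurate but not something the paper itself writes out.
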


The following lemma is obvious from the definition of $C^*_r(G)$. 

\begin{lem}
Let $G$ be an \'etale groupoid whose unit space is compact and 
let $Y\subset G^{(0)}$ be a clopen subset. 
There exists a natural isomorphism $\pi:C^*_r(G|Y)\to1_YC^*_r(G)1_Y$ 
such that $\pi(f)=f$ for every $f\in C(Y)$. 
\end{lem}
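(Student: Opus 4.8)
The statement to prove is the last lemma: for an étale groupoid $G$ with compact unit space and a clopen subset $Y\subset G^{(0)}$, there is a natural isomorphism $\pi:C^*_r(G|Y)\to 1_YC^*_r(G)1_Y$ with $\pi(f)=f$ for every $f\in C(Y)$. The plan is to build $\pi$ at the level of the dense $*$-subalgebras $C_c(G|Y)$ and $1_YC_c(G)1_Y$ and then extend by continuity. First I would observe that, since $Y$ is clopen, $G|Y=r^{-1}(Y)\cap s^{-1}(Y)$ is simultaneously an open and closed subset of $G$, and that a function $\xi\in C_c(G)$ supported in $G|Y$ is exactly the same datum as a function in $C_c(G|Y)$. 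Concretely, for $a\in C_c(G)$ the product $1_Y a 1_Y$, computed in the convolution algebra, is the function $g\mapsto 1_Y(r(g))\,a(g)\,1_Y(s(g))$, which is supported in $G|Y$; this gives the obvious linear bijection $C_c(G|Y)\to 1_YC_c(G)1_Y$, and it visibly restricts to the identity on $C(Y)\subset C_c(G|Y)$.

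Next I would check that this map is a $*$-homomorphism. Because the clopen condition forces $Y$ to be a full set of units as far as composability within $G|Y$ is concerned, the convolution product of two functions supported in $G|Y$ only involves decompositions $g=g_1g_2$ with $g_1,g_2\in G|Y$ (any other decomposition contributes a zero factor), so convolution in $C_c(G|Y)$ agrees with convolution in $C_c(G)$ restricted to such functions; similarly the involution $g\mapsto \overline{a(g^{-1})}$ is compatible with passing between $G$ and $G|Y$. Thus the bijection above is a $*$-algebra isomorphism $C_c(G|Y)\to 1_YC_c(G)1_Y$.

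The remaining point, and the one I expect to be the only real obstacle, is that this $*$-isomorphism is isometric for the reduced norms, so that it extends to an isomorphism of the completions. Here I would use the description of the reduced norm via the regular representations on the fibres $\ell^2(r^{-1}(x))$: for $x\in Y$ one has $r^{-1}(x)\cap s^{-1}(Y)\subset r^{-1}(x)$ as a subset, $\ell^2$ of the former sits inside $\ell^2$ of the latter as the range of the projection $1_Y$, and the regular representation of $C_c(G|Y)$ at $x$ is the compression to this subspace of the regular representation of $C_c(G)$ at $x$; for $x\notin Y$ the compression $1_Y a 1_Y$ acts as $0$. Taking the supremum over $x$ shows $\|\pi(a)\|_{C^*_r(G)}=\|a\|_{C^*_r(G|Y)}$. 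Since $1_YC_c(G)1_Y$ is dense in $1_YC^*_r(G)1_Y$ (as $C_c(G)$ is dense in $C^*_r(G)$ and multiplication by the projection $1_Y$ is continuous), $\pi$ extends to the desired isomorphism onto $1_YC^*_r(G)1_Y$, fixing $C(Y)$ pointwise. This is precisely the content asserted, and the naturality is immediate from the fibrewise construction.
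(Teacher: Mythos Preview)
Your approach is exactly the standard one the paper has in mind (the paper omits the proof as ``obvious from the definition of $C^*_r(G)$''): identify $C_c(G|Y)$ with $1_YC_c(G)1_Y$ as $*$-algebras via extension by zero, then check that the reduced norms agree using the regular representations.

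One small inaccuracy to fix: your claim that ``for $x\notin Y$ the compression $1_Ya1_Y$ acts as $0$'' is not correct in general. In the regular representation on $\ell^2(r^{-1}(x))$, the projection $1_Y$ has range $\ell^2(r^{-1}(x)\cap s^{-1}(Y))$, and this space can be nonzero even when $x\notin Y$ (namely, whenever the $G$-orbit of $x$ meets $Y$); the action of $a\in C_c(G|Y)$ on this subspace need not vanish. The repair is immediate: if $r^{-1}(x)\cap s^{-1}(Y)=\emptyset$ then indeed $\rho_x(a)=0$, and otherwise pick $g_0\in r^{-1}(x)\cap s^{-1}(Y)$, set $y=s(g_0)\in Y$, and observe that left translation by $g_0$ gives a unitary $\ell^2\bigl((G|Y)^y\bigr)\to \ell^2\bigl(r^{-1}(x)\cap s^{-1}(Y)\bigr)$ intertwining $\rho^{G|Y}_y(a)$ with the nonzero part of $\rho^G_x(a)$. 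Hence $\lVert\rho^G_x(a)\rVert$ is either $0$ or equals $\lVert\rho^{G|Y}_y(a)\rVert$ for some $y\in Y$, and the supremum over $x\in G^{(0)}$ agrees with the supremum over $y\in Y$. With this correction your argument goes through and yields the isometric isomorphism as claimed.
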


An element in a $C^*$-algebra $A$ is said to be full 
if it is not contained in any proper closed two-sided ideal in $A$.
The following is an easy consequence of \cite[Proposition II.4.5]{R1}. 

\begin{lem}
Let $G$ be an \'etale groupoid whose unit space is compact and 
let $Y\subset G^{(0)}$ be a clopen subset. 
Then, $Y$ is $G$-full if and only if 
$1_Y$ is a full projection in $C^*_r(G)$. 
\end{lem}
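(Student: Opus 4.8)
The plan is to translate both sides of the equivalence into statements about the lattice of open invariant subsets of $G^{(0)}$, where we call $W\subset G^{(0)}$ \emph{invariant} if $s(g)\in W$ implies $r(g)\in W$ for every $g\in G$. For the clopen set $Y$, put $U=r(s^{-1}(Y))$. Since $s^{-1}(Y)$ is open and $r$ is a local homeomorphism, $U$ is open; one checks directly that $U$ is invariant and that it is the smallest invariant subset of $G^{(0)}$ containing $Y$. By the definition of $G$-fullness, $Y$ is $G$-full if and only if $U=G^{(0)}$.

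Next I would recall \cite[Proposition II.4.5]{R1} in the form needed here: for an open invariant subset $W\subset G^{(0)}$, the closure $I(W)$ in $C^*_r(G)$ of the functions in $C_c(G)$ supported on $\{g\in G\mid s(g)\in W\}$ is a closed two-sided ideal with $I(W)\cap C(G^{(0)})=C_0(W)$; in particular $I(W)$ is proper whenever $W\neq G^{(0)}$, since then $1_{G^{(0)}}\notin C_0(W)$. Conversely, if $J\subset C^*_r(G)$ is any proper closed two-sided ideal, then $J\cap C(G^{(0)})$ is a closed ideal of the commutative algebra $C(G^{(0)})$, hence equals $C_0(W)$ for a unique open set $W$; using that $J$ is two-sided and that $G$ has a basis of open bisections one sees that $W$ is invariant, and $W\neq G^{(0)}$ because $1_{G^{(0)}}\notin J$.

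It remains to put the pieces together. If $Y$ is not $G$-full, then $U\subsetneq G^{(0)}$ is a proper open invariant set with $Y\subset U$, so $1_Y\in C_0(U)\subset I(U)$, which is a proper ideal; hence $1_Y$ is not full. Conversely, suppose $Y$ is $G$-full and $1_Y\in J$ for some closed two-sided ideal $J\subset C^*_r(G)$. Writing $J\cap C(G^{(0)})=C_0(W)$ with $W$ open and invariant, the containment $1_Y\in C_0(W)$ gives $Y\subset W$, whence $U\subset W$ and therefore $W=G^{(0)}$; thus $1_{G^{(0)}}\in J$ and $J=C^*_r(G)$, so $1_Y$ is full.

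The only point requiring attention is that we must work with the reduced groupoid $C^*$-algebra and know that $I(W)$ is a genuine, proper ideal for proper $W$; both follow at once from the identity $I(W)\cap C(G^{(0)})=C_0(W)$, which is exactly what \cite[Proposition II.4.5]{R1} provides. I emphasize that total disconnectedness of $G^{(0)}$ is not used anywhere: we only need $Y$ to be clopen so that $1_Y$ is a projection in $C(G^{(0)})$, and the range map to be \'etale so that the saturation of the open set $Y$ is again open.
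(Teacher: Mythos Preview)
Your proof is correct and follows the same route the paper indicates: both reduce the statement to the correspondence between open invariant subsets of $G^{(0)}$ and ideals of $C^*_r(G)$ furnished by \cite[Proposition II.4.5]{R1}. The paper simply cites that proposition without further comment, while you have spelled out the two directions explicitly; in particular your observation that for an arbitrary closed two-sided ideal $J$ the set $W$ with $J\cap C(G^{(0)})=C_0(W)$ is automatically invariant (via conjugation by functions supported on bisections) is exactly the ingredient one extracts from Renault's result, so there is no substantive difference in approach.
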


Combining Theorem \ref{isomC*} with the above two lemmas, 
we get the following. 
This is a generalization of \cite[Theorem 2.4]{LaO}. 

\begin{thm}\label{C*Kakutani}
Let $G_i$ be an \'etale essentially principal second countable groupoid 
whose unit space is compact and totally disconnected for $i=1,2$. 
The following are equivalent. 
\begin{enumerate}
\item $G_1$ and $G_2$ are Kakutani equivalent. 
\item There exist two projections $p_1\in C(G_1^{(0)})$, $p_2\in C(G_2^{(0)})$ 
and an isomorphism $\pi$ from $p_1C^*_r(G_1)p_1$ to $p_2C^*_r(G_2)p_2$ 
such that $p_i$ is full in $C^*_r(G_i)$ and 
$\pi(p_1C(G_1^{(0)}))=p_2C(G_2^{(0)})$. 
\end{enumerate}
\end{thm}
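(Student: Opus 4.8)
The plan is to reduce both implications to Theorem \ref{isomC*}, using the two preceding lemmas: the one identifying $1_YC^*_r(G)1_Y$ with $C^*_r(G|Y)$ compatibly with the diagonals, and the one characterizing $G$-full clopen subsets as full projections in $C(G^{(0)})$.

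First I would prove (1)$\Rightarrow$(2). Given $G_i$-full clopen subsets $Y_i\subset G_i^{(0)}$ together with an isomorphism $\psi:G_1|Y_1\to G_2|Y_2$, set $p_i=1_{Y_i}$; these are full projections by the second lemma. The isomorphism $\psi$ restricts to a homeomorphism of unit spaces $Y_1\to Y_2$ and hence induces an isomorphism $\psi_*:C^*_r(G_1|Y_1)\to C^*_r(G_2|Y_2)$ carrying $C(Y_1)$ onto $C(Y_2)$; this is just functoriality of the reduced groupoid $C^*$-algebra under groupoid isomorphisms and needs nothing beyond the definition of $C^*_r$. Conjugating $\psi_*$ by the two isomorphisms supplied by the first lemma yields $\pi:p_1C^*_r(G_1)p_1\to p_2C^*_r(G_2)p_2$, and since $p_iC(G_i^{(0)})$ is precisely the copy of $C(Y_i)$ sitting inside $1_{Y_i}C^*_r(G_i)1_{Y_i}$, we get $\pi(p_1C(G_1^{(0)}))=p_2C(G_2^{(0)})$, as required. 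This direction is entirely bookkeeping.

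For (2)$\Rightarrow$(1), since $G_i^{(0)}$ is compact and totally disconnected, the projection $p_i\in C(G_i^{(0)})$ equals $1_{Y_i}$ for a unique clopen $Y_i\subset G_i^{(0)}$, and fullness of $p_i$ forces $Y_i$ to be $G_i$-full by the second lemma. Using the first lemma, $\pi$ transports to an isomorphism $C^*_r(G_1|Y_1)\to C^*_r(G_2|Y_2)$ sending $C((G_1|Y_1)^{(0)})=C(Y_1)$ onto $C(Y_2)=C((G_2|Y_2)^{(0)})$. To apply Theorem \ref{isomC*} and conclude $G_1|Y_1\cong G_2|Y_2$, I must check that each reduction $G_i|Y_i$ is still essentially principal and second countable: second countability is immediate since $G_i|Y_i$ is an open subgroupoid of $G_i$, and for essential principality I would observe that $Y_i$ is open in $G_i^{(0)}$, that the points of $G_i^{(0)}$ with trivial isotropy are dense by \cite[Proposition 3.1]{R3}, hence their intersection with $Y_i$ is dense in $Y_i$, and that for $x\in Y_i$ the isotropy group of $x$ in $G_i|Y_i$ coincides with its isotropy group in $G_i$ (because $r(g)=s(g)=x\in Y_i$ is automatic). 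So the points of $Y_i$ with trivial isotropy in $G_i|Y_i$ are dense, and \cite[Proposition 3.1]{R3} applies to $G_i|Y_i$. Theorem \ref{isomC*} then gives $G_1|Y_1\cong G_2|Y_2$, and since the $Y_i$ are $G_i$-full clopen subsets, $G_1$ and $G_2$ are Kakutani equivalent.

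The routine part is the conjugation by the two identifications and the matching of $p_iC(G_i^{(0)})$ with $C(Y_i)$. The one step that genuinely requires an argument — and is easy to skip over — is the stability of essential principality under reduction to a clopen $G$-full set, since that is exactly what licenses the use of Theorem \ref{isomC*} in the reverse direction; this I expect to be the main (minor) obstacle.
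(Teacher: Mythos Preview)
Your proposal is correct and follows exactly the approach the paper indicates: combine Theorem~\ref{isomC*} with the two preceding lemmas identifying $1_YC^*_r(G)1_Y\cong C^*_r(G|Y)$ and characterizing $G$-fullness via fullness of $1_Y$. Your explicit check that the reduction $G_i|Y_i$ remains essentially principal and second countable is a welcome detail that the paper leaves implicit.
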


We next consider relationship 
between $[[G]]$ and unitary normalizers of $C(G^{(0)})$ in $C^*_r(G)$. 
In what follows, 
an element in $C^*_r(G)$ is identified with a function in $C_0(G)$ 
(\cite[II.4.2]{R1}). 
The following is a slight generalization of \cite[II.4.10]{R1} and 
a special case of \cite[Proposition 4.7]{R3}. 
We omit the proof. 

\begin{lem}\label{partialisom}
Let $G$ be an \'etale essentially principal second countable groupoid. 
Suppose that $v\in C^*_r(G)$ is a partial isometry satisfying 
$v^*v,vv^*\in C_0(G^{(0)})$ and $vC_0(G^{(0)})v^*=vv^*C_0(G^{(0)})$. 
Then there exists a compact open $G$-set $V\subset G$ such that 
\[
V=\{g\in G\mid v(g)\neq0\}=\{g\in G\mid\lvert v(g)\rvert=1\}. 
\]
In addition, for any $f\in v^*vC_0(G^{(0)})$, 
one has $vfv^*=f\circ\tau_{V}^{-1}$. 
\end{lem}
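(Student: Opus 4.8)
The plan is to split the proof into a ``soft'' part that extracts a homeomorphism from the algebraic hypothesis and a ``hard'' part that identifies it with $\tau_V$ by analysing $v$ as a function on $G$. For the soft part I would write $v^*v=1_Y$ and $vv^*=1_Z$ for compact open subsets $Y,Z\subseteq G^{(0)}$, and note that the partial-isometry relations $v\cdot1_Y=v=1_Z\cdot v$ make $f\mapsto vfv^*$ a $*$-isomorphism from $1_YC_0(G^{(0)})=C_0(Y)$ onto $vC_0(G^{(0)})v^*=1_ZC_0(G^{(0)})=C_0(Z)$ (multiplicativity uses $v^*v=1_Y$, injectivity follows by applying $v^*(\,\cdot\,)v$). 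By Gelfand duality there is then a homeomorphism $\tau\colon Y\to Z$ with $vfv^*=f\circ\tau^{-1}$ for all $f\in C_0(Y)$, so the ``in addition'' part of the statement reduces to showing that the open support of $v$ is a compact open $G$-set $V$ with $\tau_V=\tau$.

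For the hard part, identify $v$ with a function $\hat v\in C_0(G)$ as in \cite[II.4.2]{R1} and put $V=\{g\in G\mid\hat v(g)\neq0\}$, an open set. I would first record two elementary facts, each obtained by approximating $v$ in norm by elements of $C_c(G)$ and using that the Fourier-coefficient map is norm-decreasing: for $h\in C_0(G^{(0)})$ one has $\widehat{vh}(g)=\hat v(g)\,h(s(g))$ and $\widehat{hv}(g)=h(r(g))\,\hat v(g)$; and for $y\in G^{(0)}$ the regular representation $\lambda_y$ on $\ell^2(s^{-1}(y))$ satisfies $\lambda_y(v)\delta_y=\sum_{s(g)=y}\hat v(g)\delta_g$, whence $\sum_{s(g)=y}\lvert\hat v(g)\rvert^2=1_Y(y)$ and, by the same argument applied to $v^*$, $\sum_{r(g)=z}\lvert\hat v(g)\rvert^2=1_Z(z)$. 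From $v=v\cdot1_Y$ and from the relation $vf=(f\circ\tau^{-1})v$ for $f\in C_0(Y)$ (a consequence of the first paragraph), comparing Fourier coefficients at $g\in V$ yields $s(g)\in Y$, $r(g)\in Z$ and $r(g)=\tau(s(g))$; thus every arrow in $V$ lies over the graph of $\tau$.

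It remains to prove $\lvert\hat v\rvert\equiv1$ on $V$ and that $s,r$ are injective on $V$; this is where essential principality enters. By \cite[Proposition 3.1]{R3} the set $Y_0$ of units with trivial isotropy is dense in $G^{(0)}$. If $y\in Y_0\cap Y$ and $g_1\neq g_2$ both lay in $V$ with $s(g_1)=s(g_2)=y$, then $r(g_1)=r(g_2)=\tau(y)$, so $g_2^{-1}g_1$ is a nontrivial isotropy element at $y$, a contradiction; hence $V\cap s^{-1}(y)$ is a singleton (nonempty since $\sum_{s(g)=y}\lvert\hat v(g)\rvert^2=1$) on which $\lvert\hat v\rvert=1$. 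Since $s$ is \'etale, hence open, $V\cap s^{-1}(Y_0)$ is dense in $V$, so by continuity $\lvert\hat v\rvert\equiv1$ on $V$; substituting this into the two summation identities forces every nonempty fibre of $s|_V$ and of $r|_V$ to be a singleton, so $V$ is a $G$-set with $s(V)=Y$ and $r(V)=Z$. Since $V=\{g\mid\lvert\hat v(g)\rvert=1\}$ is closed and contained in the compact set $\{g\mid\lvert\hat v(g)\rvert\geq1/2\}$, it is a compact open $G$-set, and the relation $r(g)=\tau(s(g))$ on $V$ gives $\tau_V=\tau$; therefore $vfv^*=f\circ\tau_V^{-1}$ for every $f\in C_0(Y)=v^*vC_0(G^{(0)})$.

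I expect the third paragraph to be the main obstacle: the genuinely non-formal step is the passage from ``$\lvert\hat v\rvert=1$ except possibly over units with nontrivial isotropy'' to ``$\lvert\hat v\rvert\equiv1$ on $V$'' and thence to the single-valuedness of $s|_V$ and $r|_V$, which is exactly where the hypothesis is indispensable and which fails in general. A persistent but routine nuisance throughout is that $v$ need not lie in $C_c(G)$, so every convolution-type identity has to be justified by a density argument rather than computed outright.
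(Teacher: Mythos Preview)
The paper does not give a proof: it states that the lemma is a slight generalization of \cite[II.4.10]{R1} and a special case of \cite[Proposition 4.7]{R3}, and explicitly writes ``We omit the proof.'' Your argument is a correct, self-contained proof in the spirit of those references; the Gelfand step extracting $\tau$, the Fourier-coefficient identities forcing $r(g)=\tau(s(g))$ on $V$, and the essential-principality/density step promoting $\lvert\hat v\rvert=1$ from $V\cap s^{-1}(Y_0)$ to all of $V$ are all sound, as is the compactness of $V$ via $V=\{\lvert\hat v\rvert=1\}\subset\{\lvert\hat v\rvert\ge 1/2\}$.
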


For a unital $C^*$-algebra $A$, 
let $U(A)$ denote the unitary group of $A$ 
and for a subalgebra $B\subset A$, 
let $N(B,A)$ denote the normalizer of $B$ in $U(A)$, that is, 
\[
N(B,A)=\{u\in U(A)\mid uBu^*=B\}. 
\]
Let $G$ be an \'etale groupoid 
whose unit space $G^{(0)}$ is compact. 
Clearly $U(C(G^{(0)}))$ is a subgroup of $N(C(G^{(0)}),C^*_r(G))$ 
and we let $\iota$ denote the inclusion map. 
An element $u\in N(C(G^{(0)}),C^*_r(G))$ induces an automorphism 
$f\mapsto ufu^*$ of $C(G^{(0)})$, and so 
there exists a homomorphism 
$\sigma:N(C(G^{(0)}),C^*_r(R))\to\Homeo(G^{(0)})$ such that 
$ufu^*=f\circ\sigma(u)^{-1}$ for all $f\in C(G^{(0)})$. 
The following is a generalization of 
\cite[Section 5]{P}, \cite[Theorem 1]{T} and \cite[Theorem 1.2]{Matsu}. 

\begin{prop}\label{exact}
Suppose that $G$ is an \'etale essentially principal second countable groupoid 
whose unit space is compact. 
\begin{enumerate}
\item The image of $\sigma$ is contained 
in the topological full group $[[G]]$. 
\item The sequence 
\[
1\longrightarrow U(C(G^{(0)}))\stackrel{\iota}{\longrightarrow}
N(C(G^{(0)}),C^*_r(G))\stackrel{\sigma}{\longrightarrow}
[[G]]\longrightarrow1
\]
is exact. 
\item The homomorphism $\sigma$ has a right inverse. 
\end{enumerate}
\end{prop}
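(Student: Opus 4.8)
The plan is to handle the three parts in order, using Lemma \ref{partialisom} as the main engine. For (1), take $u\in N(C(G^{(0)}),C^*_r(G))$. Since $u$ is a unitary normalizing $C(G^{(0)})$ and $G^{(0)}$ is compact, $u$ is in particular a partial isometry with $u^*u=uu^*=1_{G^{(0)}}\in C(G^{(0)})$, and $uC(G^{(0)})u^*=C(G^{(0)})=uu^*C(G^{(0)})$, so Lemma \ref{partialisom} applies and yields a compact open $G$-set $V$ with $V=\{g\in G\mid u(g)\ne 0\}$ and $vfv^*=f\circ\tau_V^{-1}$ for $f\in C(G^{(0)})$. Comparing with the defining relation $ufu^*=f\circ\sigma(u)^{-1}$ and using that $C(G^{(0)})$ separates points of $G^{(0)}$, we get $\sigma(u)=\tau_V$; moreover $V$ is $G$-full-domained in the sense that $s(V)=r(V)=G^{(0)}$ because $u$ is unitary. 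Hence $\sigma(u)\in[[G]]$.

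For (2), exactness at $U(C(G^{(0)}))$ is clear since $\iota$ is injective. Exactness at $N(C(G^{(0)}),C^*_r(G))$ amounts to $\ker\sigma=U(C(G^{(0)}))$: if $\sigma(u)=\id$ then $ufu^*=f$ for all $f\in C(G^{(0)})$, so the associated $G$-set $V$ satisfies $\tau_V=\id$, forcing $V\subset G'$; since $G$ is essentially principal, the interior of $G'$ is $G^{(0)}$, and a compact open $G$-set contained in $G'$ with full source must coincide with $G^{(0)}$ (here I would argue that $V\cap(G'\setminus G^{(0)})$ is open in $V$ yet misses the dense set of units-with-trivial-isotropy, hence is empty). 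Thus $u\in C_0(G)$ is supported on $G^{(0)}$ and unitary, i.e. $u\in U(C(G^{(0)}))$. Surjectivity of $\sigma$ onto $[[G]]$: given $\gamma=\tau_U$ for a compact open $G$-set $U$ with $s(U)=r(U)=G^{(0)}$, the function $u=1_U\in C_c(G)\subset C^*_r(G)$ is a unitary (since $u^*u=1_{s(U)}=1_{G^{(0)}}=1_{r(U)}=uu^*$), it normalizes $C(G^{(0)})$ because conjugation by $1_U$ implements $\tau_U$, and $\sigma(u)=\tau_U=\gamma$ by the computation in part (1).

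For (3), the right inverse is exactly the map $\gamma\mapsto 1_U$ just constructed: choose for each $\gamma\in[[G]]$ a compact open $G$-set $U_\gamma$ with $\tau_{U_\gamma}=\gamma$ and set $\beta(\gamma)=1_{U_\gamma}$. The point is that this is canonical and multiplicative: $U$ is uniquely determined by $\gamma$ since $U=\{g\in G\mid r(g)\in G^{(0)},\ s(g)=\gamma(r(g))\}$, which one reads off from the graph of $\gamma$ inside $G$; and $U_{\gamma_1\gamma_2}=U_{\gamma_1}U_{\gamma_2}$ as a product of $G$-sets, whence $1_{U_{\gamma_1}}1_{U_{\gamma_2}}=1_{U_{\gamma_1\gamma_2}}$ in $C^*_r(G)$. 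So $\beta$ is a group homomorphism with $\sigma\circ\beta=\id_{[[G]]}$.

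The main obstacle is the $\ker\sigma$ computation in (2): one must pass from the purely $C^*$-algebraic statement $ufu^*=f$ to the conclusion that $u$, viewed as a function in $C_0(G)$, is supported on $G^{(0)}$, and this is precisely where essential principality (density of the trivial-isotropy points, via \cite[Proposition 3.1]{R3}) is needed to rule out a nontrivial compact open $G$-set inside the isotropy bundle. The surjectivity and the right-inverse parts are then essentially bookkeeping with products of $G$-sets, provided one has verified that $1_U$ really is a unitary and that $\sigma(1_U)=\tau_U$, both of which follow from Lemma \ref{partialisom}.
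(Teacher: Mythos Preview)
Your argument is correct in substance and matches the paper's approach for parts (1) and (3). The one notable difference is your computation of $\ker\sigma$ in (2): the paper invokes the fact that $C(G^{(0)})$ is a maximal abelian subalgebra of $C^*_r(G)$ (\cite[Proposition II.4.7]{R1}), so $ufu^*=f$ for all $f$ forces $u\in C(G^{(0)})'\cap C^*_r(G)=C(G^{(0)})$ immediately. Your route instead feeds $u$ back through Lemma~\ref{partialisom} to get a compact open $G$-set $V\subset G'$ and then uses essential principality to conclude $V=G^{(0)}$; this also works (indeed $V$ is open in $G$ and contained in $G'$, hence in the interior of $G'$, which equals $G^{(0)}$), but the MASA argument is shorter and avoids re-invoking the structure lemma.

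One small slip: your explicit formula $U_\gamma=\{g\in G\mid s(g)=\gamma(r(g))\}$ is not correct in general, since this set picks up all isotropy at fixed points of $\gamma$ and need not be a $G$-set. Uniqueness of $U_\gamma$ is nonetheless true and follows from essential principality exactly as the paper states: if $U,U'$ are two compact open $G$-sets with $\tau_U=\tau_{U'}=\gamma$, then $U^{-1}U'$ is a compact open $G$-set contained in $G'$, hence in $G^{(0)}$, so $U=U'$. With that correction your construction of the right inverse and the verification $U_{\gamma_1\gamma_2}=U_{\gamma_1}U_{\gamma_2}$ go through as you wrote.
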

\begin{proof}
(1) This follows from Lemma \ref{partialisom} and 
the definition of $[[G]]$. 

(2) By definition, $\iota$ is injective. 
Since $C(G^{(0)})$ is abelian, 
the image of $\iota$ is contained in the kernel of $\sigma$. 
By \cite[Proposition II.4.7]{R1}, 
$C(G^{(0)})$ is a maximal abelian subalgebra. 
It follows that the kernel of $\sigma$ is contained in the image of $\iota$. 
The surjectivity of $\sigma$ follows from (3). 

(3) Take $\gamma\in[[G]]$. 
Since $G$ is essentially principal, 
there exists a unique compact open $G$-set $U\subset G$ 
such that $\gamma=\tau_U$. 
We let $u\in C_c(G)$ be the characteristic function of $U$. 
One can see that $u$ is a unitary in $C^*_r(G)$ 
satisfying $ufu^*=f\circ\gamma^{-1}$ for all $f\in C(G^{(0)})$. 
The map $\gamma\mapsto u$ gives a right inverse of $\sigma$. 
\end{proof}

Let $G$ be an \'etale essentially principal groupoid 
whose unit space is compact. 
We let $\Aut_{C(G^{(0)})}(C^*_r(G))$ denote 
the group of automorphisms of $C^*_r(G)$ preserving $C(G^{(0)})$ globally. 
Thus 
\[
\Aut_{C(G^{(0)})}(C^*_r(G))
=\{\alpha\in\Aut(C^*_r(G))\mid\alpha(C(G^{(0)}))=C(G^{(0)})\}. 
\]
We let $\Inn_{C(G^{(0)})}(C^*_r(G))$ denote the subgroup 
of $\Aut_{C(G^{(0)})}(C^*_r(G))$ consisting of inner automorphisms. 
In other words, 
\[\Inn_{C(G^{(0)})}(C^*_r(G))
=\{\Ad u\mid u\in N(C(G^{(0)}),C^*_r(G))\}. 
\]
Let $\Out_{C(G^{(0)})}(C^*_r(G))$ be the quotient group of 
$\Aut_{C(G^{(0)})}(C^*_r(G))$ by $\Inn_{C(G^{(0)})}(C^*_r(G))$. 
The automorphism group of $G$ is denoted by $\Aut(G)$. 
For $\gamma=\tau_O\in[[G]]$, 
there exists an automorphism $\phi_\gamma\in\Aut(G)$ satisfying 
\[
OgO^{-1}=\{\phi_\gamma(g)\}
\]
for all $g\in G$. 
We regard $[[G]]$ as a subgroup of $\Aut(G)$ 
via the identification of $\gamma$ with $\phi_\gamma$. 
Thanks to \cite[Proposition 4.11]{R3}, 
we can prove the following exact sequences for these automorphism groups, 
which generalize \cite[Proposition 2.4]{GPS2}, \cite[Theorem 3]{T2} and 
\cite[Theorem 1.3]{Matsu}. 
See \cite[Definition I.1.12]{R1} for the definition of 
$Z^1(G,\T)$, $B^1(G,\T)$ and $H^1(G,\T)$. 

\begin{prop}\label{exact2}
Let $G$ be an \'etale essentially principal second countable groupoid 
whose unit space is compact. 
\begin{enumerate}
\item There exist short exact sequences: 
\[
1\longrightarrow Z^1(G,\T)\stackrel{j}{\longrightarrow}
\Aut_{C(G^{(0)})}(C^*_r(G))\stackrel{\omega}{\longrightarrow}
\Aut(G)\longrightarrow1, 
\]
\[
1\longrightarrow B^1(G,\T)\stackrel{j}{\longrightarrow}
\Inn_{C(G^{(0)})}(C^*_r(G))\stackrel{\omega}{\longrightarrow}
[[G]]\longrightarrow1, 
\]
\[
1\longrightarrow H^1(G,\T)\stackrel{j}{\longrightarrow}
\Out_{C(G^{(0)})}(C^*_r(G))\stackrel{\omega}{\longrightarrow}
\Aut(G)/[[G]]\longrightarrow1. 
\]
Moreover, they all split, that is, $\omega$ has a right inverse. 
\item 
Suppose that $G$ admits a covering by compact open $G$-sets $O$ 
satisfying $r(O)=s(O)=G^{(0)}$. 
Then $\Aut(G)$ is naturally isomorphic to 
the normalizer $N([[G]])$ of $[[G]]$ in $\Homeo(G^{(0)})$. 
\end{enumerate}
\end{prop}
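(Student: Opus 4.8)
The plan is to derive all three exact sequences of part (1) from J.\ Renault's reconstruction theorem \cite[Proposition 4.11]{R3}, which for an essentially principal second countable \'etale groupoid realizes $\Aut_{C(G^{(0)})}(C^*_r(G))$ as an extension of $\Aut(G)$ by the group $Z^1(G,\T)$ of continuous $\T$-valued $1$-cocycles. Spelled out: each $\alpha\in\Aut_{C(G^{(0)})}(C^*_r(G))$ restricts to an automorphism of $C(G^{(0)})$, which by \cite{R3} is implemented by a unique groupoid automorphism $\omega(\alpha)\in\Aut(G)$, and $\alpha$ differs from the geometric lift $\phi\mapsto\alpha_\phi$, where $\alpha_\phi(a)=a\circ\phi^{-1}$ (viewing $C^*_r(G)\subset C_0(G)$), only by pointwise multiplication by a cocycle. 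Here $\alpha_\phi$ is a $*$-automorphism of $C^*_r(G)$ fixing $C(G^{(0)})$ globally because $\phi$ preserves convolution, and $\phi\mapsto\alpha_\phi$ is a homomorphic section of $\omega$. Setting $j(c)(a)=ca$ for $c\in Z^1(G,\T)$, one checks $j(c)$ is a $*$-automorphism (the cocycle identity is exactly what makes $j(c)$ multiplicative), and that $j(c)$ fixes $C(G^{(0)})$ pointwise since $c\equiv1$ on $G^{(0)}$, so $\omega\circ j$ is trivial; $j$ is injective, and $\Ima j=\ker\omega$ is precisely the content of \cite{R3}. This yields the first sequence, split by $\phi\mapsto\alpha_\phi$.

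For the second sequence I would use Proposition \ref{exact}. If $u\in N(C(G^{(0)}),C^*_r(G))$ has support $G$-set $V$ (Lemma \ref{partialisom}), then $\sigma(u)=\tau_V\in[[G]]$ and $u$ agrees with $1_V$ up to a $\T$-valued phase, so $\Ad u$ and $\Ad(1_V)$ have the same image under $\omega$; writing $U_\gamma$ for the unique compact open $G$-set with $\tau_{U_\gamma}=\gamma$, a short convolution computation using the defining relation $U_\gamma gU_\gamma^{-1}=\{\phi_\gamma(g)\}$ shows $\Ad(1_{U_\gamma})=\alpha_{\phi_\gamma}$, hence $\omega(\Ad u)=\phi_{\sigma(u)}$. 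Since $\sigma$ is onto $[[G]]$, $\omega$ carries $\Inn_{C(G^{(0)})}(C^*_r(G))$ onto $[[G]]\subset\Aut(G)$, and $\gamma\mapsto\Ad(1_{U_\gamma})$ is a section, multiplicative because $1_{U_1}1_{U_2}=1_{U_1U_2}$. An inner automorphism lying in $\ker\omega=\Ima j$ fixes $C(G^{(0)})$ pointwise, hence (by maximality of the abelian subalgebra $C(G^{(0)})$, \cite[Proposition II.4.7]{R1}) equals $\Ad u$ with $u\in U(C(G^{(0)}))$, so it is $j(\partial u)$ with $\partial u\in B^1(G,\T)$; conversely $j(B^1(G,\T))\subseteq\Inn_{C(G^{(0)})}(C^*_r(G))$. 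This gives the second sequence, split. The third follows by passing to quotients: $\Inn_{C(G^{(0)})}(C^*_r(G))$ is normal in $\Aut_{C(G^{(0)})}(C^*_r(G))$ (conjugating $\Ad u$ by $\alpha$ gives $\Ad(\alpha(u))$, and $\alpha(u)$ again normalizes $C(G^{(0)})$) and $[[G]]$ is normal in $\Aut(G)$ (since $\phi\phi_\gamma\phi^{-1}=\phi_{\tau_{\phi(U_\gamma)}}$), so $\omega$ descends to $\bar\omega\colon\Out_{C(G^{(0)})}(C^*_r(G))\to\Aut(G)/[[G]]$; it is surjective with $\ker\bar\omega\cong\Ima j/j(B^1(G,\T))\cong H^1(G,\T)$, and the section descends because $\phi\mapsto\alpha_\phi$ sends $[[G]]$ into $\Inn_{C(G^{(0)})}(C^*_r(G))$.

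For part (2) I would analyze the homomorphism $R\colon\Aut(G)\to\Homeo(G^{(0)})$, $R(\phi)=\phi|_{G^{(0)}}$. It is injective: if $\phi|_{G^{(0)}}=\id$, then for any $g$ whose range has trivial isotropy one has $\phi(g)g^{-1}\in r^{-1}(r(g))\cap s^{-1}(r(g))=\{r(g)\}$, so $\phi(g)=g$; such $g$ form the $r$-preimage of a dense subset of $G^{(0)}$ and hence a dense subset of $G$ (as $r$ is open and onto), while $\{g\mid\phi(g)=g\}$ is closed, so $\phi=\id$. Its image lies in $N([[G]])$ since $R(\phi)\circ\tau_U\circ R(\phi)^{-1}=\tau_{\phi(U)}$ for every compact open $G$-set $U$. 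For surjectivity I would use the covering hypothesis: given $\psi\in N([[G]])$, for each compact open $G$-set $O$ with $r(O)=s(O)=G^{(0)}$ let $O^\psi$ be the unique compact open $G$-set with $\tau_{O^\psi}=\psi\circ\tau_O\circ\psi^{-1}$ (unique by essential principality; note $r(O^\psi)=s(O^\psi)=G^{(0)}$), define $\phi$ on $O$ by $g\mapsto(s|O^\psi)^{-1}(\psi(s(g)))$, and patch over a cover of $G$ by such $O$'s; one then checks $R(\phi)=\psi$.

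The only real difficulty is showing this $\phi$ is well defined on overlaps $O_1\cap O_2$. There the two candidate values of $\phi(g)$ differ by the element of $O_1^\psi(O_2^\psi)^{-1}$ lying over $\psi(r(g))$, and $O_1^\psi(O_2^\psi)^{-1}$ is the unique $G$-set implementing $\psi\gamma\psi^{-1}$ with $\gamma=\tau_{O_1}\tau_{O_2}^{-1}$, which fixes $r(g)$ with its implementing $G$-set $O_1O_2^{-1}$ equal to a unit there. So it suffices to know that the clopen set $W_\gamma$ of points over which the implementing $G$-set of $\gamma\in[[G]]$ consists of units is intrinsic to $\gamma$ as a homeomorphism and is taken to $W_{\psi\gamma\psi^{-1}}$ by $\psi$. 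I would prove $W_\gamma$ equals the interior of $\{x\mid\gamma(x)=x\}$: ``$\subseteq$'' is immediate, and ``$\supseteq$'' uses that an open subset of the isotropy bundle is contained in $G^{(0)}$ because $G$ is essentially principal. Then $W_{\psi\gamma\psi^{-1}}=\psi(W_\gamma)$ since $\psi$ is a homeomorphism conjugating fixed-point sets, so from $r(g)\in W_\gamma$ we get $\psi(r(g))\in W_{\psi\gamma\psi^{-1}}$, forcing the element over $\psi(r(g))$ to be a unit and hence the two candidate values to coincide. The remaining checks --- that $\phi$ is continuous, a groupoid homomorphism, bijective (with inverse given by the same construction applied to $\psi^{-1}$), and restricts to $\psi$ on $G^{(0)}$ --- are routine, and I expect the extraction of the intrinsic description of $W_\gamma$ to be the main obstacle.
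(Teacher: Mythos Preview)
Your approach is essentially the same as the paper's. For part (1), both you and the paper invoke \cite[Proposition 4.11]{R3} to define $\omega$ via the Weyl groupoid and construct the section $\phi\mapsto\alpha_\phi$ in the same way. The one difference is that where you attribute $\ker\omega=\Ima j$ directly to \cite{R3}, the paper supplies an explicit pointwise computation: for $\alpha\in\ker\omega$ and $g\in G$, it picks any $u\in C_c(G)$ positive at $g$ and supported on an open $G$-set, shows $|\alpha(u)(g)|=|u(g)|$, and then verifies that $\xi(g):=\alpha(u)(g)/u(g)$ is independent of the choice of $u$ and defines a cocycle with $j(\xi)=\alpha$. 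Your deferral to \cite{R3} is defensible in spirit, but the specific statement is not literally there, so you would eventually need this computation. The paper dispatches the second and third sequences in one line (``immediately obtained from the first one''), whereas you spell them out; your extra detail is correct.

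For part (2), your argument matches the paper's: same restriction map, same construction of the inverse. The paper is terse at the two points where you supply detail---it says only ``since $G$ is essentially principal, one can see that $q$ is injective'' and ``as $G$ is essentially principal, we can conclude that $g'$ does not depend on the choice of $O$''---and your characterization of $W_\gamma$ as the interior of the fixed-point set is exactly the mechanism that makes the latter work. (A cosmetic difference: the paper parameterizes the image element via $r$ and you via $s$, but this is immaterial.)
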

\begin{proof}
(1)
It suffices to show the exactness of the first sequence, 
because the others are immediately obtained from the first one. 
Take $\alpha\in\Aut_{C(G^{(0)})}(C^*_r(G))$. 
Clearly $\alpha$ induces an automorphism of the Weyl groupoid of 
$(C^*_r(G),C(G^{(0)}))$ (\cite[Definition 4.2]{R3}), 
which is canonically isomorphic to $G$ by \cite[Proposition 4.11]{R3}. 
Therefore there exists a homomorphism $\omega$ 
from $\Aut_{C(G^{(0)})}(C^*_r(G))$ to $\Aut(G)$. 
Evidently $\omega$ is surjective and has a right inverse. 
Take $\xi\in Z^1(G,\T)$. 
We can define $j(\xi)\in\Aut_{C(G^{(0)})}(C^*_r(G))$ by setting 
\[
j(\xi)(f)(g)=\xi(g)f(g)
\]
for $f\in C^*_r(G)$ and $g\in G$. 
Obviously $j$ is an injective homomorphism and 
$\Ima j$ is contained in $\ker\omega$. 
It remains for us to show that $\ker\omega$ is contained in $\Ima j$. 
For $f\in C_c(G)$, we write $\supp(f)=\{g\in G\mid f(g)\neq0\}$. 
Suppose $\alpha\in\ker\omega$. 
We have $\alpha(f)=f$ for $f\in C(G^{(0)})$. 
Take $g\in G$. 
Choose $u\in C_c(G)$ so that $u(g)>0$, $u(h)\geq0$ for all $h\in G$ and 
$\supp(u)$ is an open $G$-set. 
Then $\alpha(u)(g)/u(g)$ is in $\T$, because
\[
\lvert u(g)\rvert^2=\lvert u^*u(s(g))\rvert
=\lvert \alpha(u^*u)(s(g))\rvert=\lvert \alpha(u)(g)\rvert^2. 
\]
Let $v\in C_c(G)$ be a function which has the same properties as $u$. 
Then $O=\supp(u)\cap\supp(v)$ is an open $G$-set containing $g$. 
Let $w\in C(G^{(0)})$ be a positive element 
satisfying $w(s(g))>0$ and $\supp(w)\subset s(O)$. 
We have 
\[
(u(v^*v)^{1/2}w)(h)=u(h)((v^*v)^{1/2}w)(s(h))=u(h)v(h)w(s(h))
\]
for every $h\in G$. 
Since we also have the same equation for $v(u^*u)^{1/2}w$, 
we can conclude $u(v^*v)^{1/2}w=v(u^*u)^{1/2}w$. 
Accordingly, one obtains 
\begin{align*}
\alpha(u)(g)v(g)w(s(g))
&=(\alpha(u)(v^*v)^{1/2}w)(g) \\
&=\alpha(u(v^*v)^{1/2}w)(g) \\
&=\alpha(v(u^*u)^{1/2}w)(g) \\
&=(\alpha(v)(u^*u)^{1/2}w)(g)=\alpha(v)(g)u(g)w(s(g)). 
\end{align*}
It follows that 
the value $\alpha(u)(g)/u(g)$ does not depend on the choice of $u$. 
We write it by $\xi(g)$. 
From the definition, 
it is easy to verify that $\xi$ belongs to $Z^1(G,\T)$ and 
that $\alpha$ is equal to $j(\xi)$. 

(2)
For $\gamma\in\Aut(G)$, it is easy to see that 
$\gamma|G^{(0)}$ is in the normalizer $N([[G]])$ of $[[G]]$. 
The map $q:\Aut(G)\to N([[G]])$ sending $\gamma$ to $\gamma|G^{(0)}$ 
is clearly a homomorphism. 
Since $G$ is essentially principal, one can see that $q$ is injective. 
Let $h\in N([[G]])$. 
For $g\in G$, let $O$ be a compact open $G$-set such that 
$r(O)=s(O)=G^{(0)}$ and $g\in O$. 
There exists a compact open $G$-set $O'$ 
such that $h\circ\tau_O\circ h^{-1}=\tau_{O'}$, 
because $h$ is in the normalizer of $[[G]]$. 
Set $g'=(r|O')^{-1}(h(r(g))$. 
Clearly $r(g')=h(r(g))$ and $s(g')=h(s(g))$. 
As $G$ is essentially principal, 
we can conclude that $g'$ does not depend on the choice of $O$. 
We define $\gamma:G\to G$ by letting $\gamma(g)=g'$. 
It is not so hard to see that $\gamma$ is in $\Aut(G)$ and $q(\gamma)=h$, 
which means that $q$ is surjective. 
Consequently, $q$ is an isomorphism. 
\end{proof}

\begin{rem}
Any transformation groupoid $G_\phi$ and 
any principal and totally disconnected $G$ satisfy 
the hypothesis of Proposition \ref{exact2} (2). 
When $G$ is an \'etale groupoid arising from a subshift of finite type, 
$\Aut(G)$ is naturally isomorphic to $N([[G]])$ 
(\cite[Theorem 1.3]{Matsu}). 
\end{rem}

%%%%%%%%%%%%%%%%%%%%%%%%%%%%%%%%%%%%%%%%%%%%%%%%%%%%%%%%%%%%
\section{Almost finite groupoids}

In this section we introduce the notion of almost finite groupoids 
(Definition \ref{almstfnt}). 
Transformation groupoids arising from free actions of $\Z^N$ are 
shown to be almost finite (Lemma \ref{ZNisAF}). 
Moreover, for two $G$-full clopen subsets, 
we prove that they have the same class in $H_0(G)$ 
if and only if there exists an element in $[[G]]$ which maps one to the other. 

Throughout this section, 
we let $G$ be a second countable \'etale groupoid 
whose unit space is compact and totally disconnected. 
The equivalence class of $f\in C(G^{(0)},\Z)$ in $H_0(G)$ is denoted by $[f]$. 
A probability measure $\mu$ on $G^{(0)}$ is said to be $G$-invariant 
if $\mu(r(U))=\mu(s(U))$ holds for every open $G$-set $U$. 
The set of all $G$-invariant measures is denoted by $M(G)$. 
For $\mu\in M(G)$, 
we can define a homomorphism $\hat\mu:H_0(G)\to\R$ by 
\[
\hat\mu([f])=\int f\,d\mu. 
\]

%%%%%%%%%%%%%%%%%%%%%%%%%%%%%%%%%%%%%%%%
\subsection{Almost finite groupoids}

The following lemma will be used repeatedly later. 

\begin{lem}\label{cptHopf}
Suppose that $G$ is compact and principal. 
\begin{enumerate}
\item If a clopen subset $U\subset G^{(0)}$ and $c>0$ satisfy 
$\lvert G(x)\cap U\rvert<c\lvert G(x)\rvert$ for all $x\in G^{(0)}$, then 
$\mu(U)<c$ for all $\mu\in M(G)$. 
\item Let $U_1,U_2,\dots,U_n$ and $O$ be clopen subsets of $G^{(0)}$ 
satisfying $\sum_{i=1}^n\lvert G(x)\cap U_i\rvert\leq\lvert G(x)\cap O\rvert$ 
for any $x\in G^{(0)}$. 
Then there exist compact open $G$-sets $C_1,C_2,\dots,C_n$ such that 
$r(C_i)=U_i$, $s(C_i)\subset O$ for all $i$ and 
$s(C_i)$'s are mutually disjoint. 
\item Let $U$ and $V$ be clopen subsets of $G^{(0)}$ 
satisfying $\lvert G(x)\cap U\rvert=\lvert G(x)\cap V\rvert$ 
for any $x\in G^{(0)}$. 
Then there exists a compact open $G$-set $C$ such that 
$r(C)=U$ and $s(C)=V$. 
\item For $f\in C(G^{(0)},\Z)$, 
$[f]$ is in $H_0(G)^+$ if and only if 
$\hat\mu([f])\geq0$ for every $\mu\in M(G)$. 
\end{enumerate}
\end{lem}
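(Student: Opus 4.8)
The plan is to treat the four parts in the order (1), (2), (3), (4), since each builds on the previous ones, and to exploit the structure of a compact principal groupoid: such a $G$ is an étale equivalence relation in which every orbit $G(x)$ is finite and the orbit sizes are locally constant (this follows from compactness and openness of $G$, or from Lemma \ref{cptKakutani} applied together with \cite[Lemma 3.4]{GPS3}). Thus $G^{(0)}$ decomposes as a finite disjoint union of clopen sets on each of which $\lvert G(x)\rvert$ is constant, and on each such piece $G$ looks like a ``matrix algebra'' — a finite union of compact open $G$-sets giving all the partial bijections within orbits.

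For (1), I would fix $\mu \in M(G)$ and integrate the pointwise inequality over $G^{(0)}$. The key identity is that for any clopen $U$, $\int_{G^{(0)}} \lvert G(x)\cap U\rvert \, d\mu(x) = \int_{G^{(0)}} \lvert G(x)\rvert \, \mu(U \text{ seen along the orbit})$; more precisely, writing $G^{(0)}$ as a finite union of clopen pieces $Y_j$ on which $\lvert G(x)\rvert = m_j$ is constant, and using that $\mu$ restricted to each orbit-block is ``uniform'' by $G$-invariance, one gets $\mu(U) = \sum_j \frac{1}{m_j}\int_{Y_j}\lvert G(x)\cap U\rvert\,d\mu$ and similarly $1 = \sum_j \frac{1}{m_j}\int_{Y_j}\lvert G(x)\rvert\,d\mu$; the hypothesis then forces $\mu(U) < c$. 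For (2), this is a Hall-type marriage/matching argument carried out clopen-block by clopen-block: on each piece where all relevant orbit-counting functions are locally constant, the inequality $\sum_i \lvert G(x)\cap U_i\rvert \le \lvert G(x)\cap O\rvert$ lets me greedily assign, within each orbit, distinct elements of $G(x)\cap O$ to the points of $G(x)\cap U_i$, and continuity/clopen-ness lets me bundle these assignments into finitely many compact open $G$-sets $C_i$ with $r(C_i)=U_i$, $s(C_i)\subset O$ and the $s(C_i)$ pairwise disjoint. Part (3) is then immediate: apply (2) with $n=1$ to get $C$ with $r(C)=U$, $s(C)\subset V$, and by the equality of counts $s(C)$ must be all of $V$ (count orbit-by-orbit, or apply (2) again in the other direction and compare).

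For (4), one direction is trivial: if $[f]\in H_0(G)^+$ then $f$ is cohomologous to a nonnegative function, and $\hat\mu$ is well-defined on $H_0(G)$ and clearly nonnegative on nonnegative functions, so $\hat\mu([f])\ge 0$. The substantive direction is the converse. Given $f$ with $\hat\mu([f])\ge 0$ for all $\mu$, write $f = f_+ - f_-$ with $f_\pm \ge 0$ the positive and negative parts (these are continuous integer-valued since $f$ is). The condition says $\int f_+ \, d\mu \ge \int f_- \, d\mu$ for every $\mu \in M(G)$. I would like to conclude $[f]\in H_0(G)^+$, i.e. that $f$ is cohomologous to a nonnegative function; equivalently that there is a compact open $G$-set matching ``the $f_-$ copies'' into ``the $f_+$ copies'' in the sense needed for $\delta_1$ to absorb the difference. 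The natural route is: on each clopen block where $\lvert G(x)\rvert$ is constant, compare the orbit-sums $\sum_{y\in G(x)} f_+(y)$ and $\sum_{y\in G(x)} f_-(y)$; if I can show the former dominates the latter on every orbit, a counting/matching argument of the type in (2)–(3) produces the required $G$-set and finishes the proof. The main obstacle — and the step I expect to require care — is deducing this orbit-by-orbit domination from the measure-theoretic hypothesis: a priori $\hat\mu([f])\ge0$ for all invariant $\mu$ only controls averages, and to pass to every orbit I will use that the ergodic (extreme) invariant measures, restricted to a single clopen block with constant orbit size, are exactly the uniform measures on individual orbit-closures, so that testing against a well-chosen (approximating) invariant measure isolates the orbit-sum at $x$. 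Concretely, for a clopen neighbourhood basis around an orbit, the uniform probability measures on the corresponding clopen ``orbit-tubes'' lie in $M(G)$, and letting these shrink forces $\sum_{y\in G(x)}f(y)\ge 0$; continuity of $f$ and finiteness of orbits make this limiting argument legitimate. Once that pointwise orbit inequality is in hand, I feed $U$ = (support-with-multiplicity of $f_-$) and $O$ = (support-with-multiplicity of $f_+$) into the matching machinery of (2) to realize $f_- \sim f_+ - (\text{nonnegative})$ in $H_0(G)$, completing (4).
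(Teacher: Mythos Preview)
Your proposal is correct and follows the same strategy as the paper, which makes your ``clopen block'' structure explicit via Lemmas~\ref{cptKakutani} and~\ref{tower1} (obtaining tower coordinates $G\cong Y_f$ over a transversal $Y$) and then performs the orbit-by-orbit matching for (2) exactly as you describe, with (1), (3), (4) handled as corollaries. One simplification for (4): the uniform probability measure on a single finite orbit $G(x)$ is itself $G$-invariant, so testing against it immediately yields $\sum_{y\in G(x)}f(y)\ge0$ without any shrinking or limiting argument.
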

\begin{proof}
(1) is clear from the definition. 
(3) easily follows from (2). 
(4) can be proved in a similar fashion to (2). 
We show only (2). 
By Lemma \ref{cptKakutani}, 
there exists a $G$-full clopen subset $Y\subset G^{(0)}$ 
such that $G|Y=Y$. 
It follows from Lemma \ref{tower1} that 
there exist $f\in C(Y,\Z)$ and an isomorphism $\pi$ from $(G|Y)_f=Y_f$ to $G$ 
such that $\pi(y,0,0)=y$ for all $y\in Y$. 
For each $(n{+}1)$-tuple $\Lambda=(\lambda_0,\lambda_1,\dots,\lambda_n)$ 
of finite subsets of $\N$ satisfying 
\[
\lvert\lambda_0\rvert
\geq\lvert\lambda_1\rvert+\lvert\lambda_2\rvert+\dots+\lvert\lambda_n\rvert, 
\]
we fix an $n$-tuple $\alpha_\Lambda=(\alpha_1,\alpha_2,\dots,\alpha_n)$ 
of injective maps $\alpha_i:\lambda_i\to\lambda_0$ 
such that $\alpha_i(\lambda_i)\cap\alpha_j(\lambda_j)=\emptyset$ 
for $i\neq j$. 
For $y\in Y$, we set 
\[
\lambda_0(y)=\{k\mid\pi(y,k,k)\in O\},\quad 
\lambda_i(y)=\{k\mid\pi(y,k,k)\in U_i\}\quad
\forall i=1,2,\dots,n
\]
and $\Lambda(y)=(\lambda_0(y),\lambda_1(y),\dots,\lambda_n(y))$. 
Let $\alpha_{y,i}$ be the $i$-th summand of $\alpha_{\Lambda(y)}$. 
For $i=1,2,\dots,n$, we define $C_i\subset G$ by 
\[
\pi^{-1}(C_i)=\{(y,k,l)\in(G|Y)_f\mid
k\in\lambda_i(y),\ \alpha_{y,i}(k)=l\}. 
\]
Then one can verify that 
$C_i$'s are compact open $G$-sets which meet the requirement. 
\end{proof}

\begin{df}\label{almstfnt}
We say that $G$ is almost finite, 
if for any compact subset $C\subset G$ and $\ep>0$ 
there exists an elementary subgroupoid $K\subset G$ such that 
\[
\frac{\lvert CKx\setminus Kx\rvert}{\lvert K(x)\rvert}<\ep
\]
for all $x\in G^{(0)}$, 
where $K(x)$ stands for the $K$-orbit of $x$. 
We also remark that $\lvert K(x)\rvert$ equals $\lvert Kx\rvert$, 
because $K$ is principal. 
\end{df}

This definition may remind the reader 
of the F\o lner condition for amenable groups. 
While there is no direct relationship between them, 
it may be natural to expect that 
transformation groupoids arising from free actions of amenable groups 
are almost finite. 
Indeed, the next lemma shows that this is true at least for $\Z^N$. 
Notice that $\phi$ need not be minimal in the following statement. 

\begin{lem}\label{ZNisAF}
When $\phi:\Z^N\curvearrowright X$ is a free action of $\Z^N$ 
on a compact, metrizable and totally disconnected space $X$, 
the transformation groupoid $G_\phi$ is almost finite. 
\end{lem}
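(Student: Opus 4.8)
The plan is to exploit the classical Følner property of $\Z^N$ together with the fact that the action $\phi$ is free, so that the transformation groupoid $G_\phi = \Z^N \times X$ locally looks like a disjoint union of copies of $\Z^N$. First I would observe that any compact subset $C \subset G_\phi$ is contained in $F \times X$ for some finite symmetric set $F \subset \Z^N$ containing the identity, so it suffices to build, for each finite $F$ and each $\ep > 0$, an elementary subgroupoid $K$ with $\lvert (F\times X)Kx \setminus Kx\rvert / \lvert Kx\rvert < \ep$ for all $x$. Choose a finite rectangle (Følner set) $Q \subset \Z^N$ with $\lvert (F+Q) \setminus Q\rvert < \ep \lvert Q\rvert$; the goal is to tile $X$ (up to the required error) by clopen sets each of which is a ``base'' over which the $Q$-translates stack up freely, and let $K$ be the equivalence relation generated by these towers.

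The key steps, in order: (1) reduce to $C = F \times X$ with $F$ finite symmetric, $e \in F$; (2) pick a Følner rectangle $Q \subset \Z^N$ with small $F$-boundary; (3) use freeness and compactness to produce a clopen partition of $X$ fine enough that, for a suitable Rokhlin-type decomposition, each piece serves as the bottom of a $Q$-tower on which $\phi^q$ for $q \in Q$ are pairwise distinct and give clopen images that are mutually disjoint — here I would invoke a Rokhlin lemma for $\Z^N$-actions on totally disconnected spaces (alternatively, build the towers directly using that free $\Z^N$-actions on Cantor sets admit such clopen tilings, e.g. by an inductive marker construction); (4) define $K$ as the principal elementary subgroupoid whose orbits are exactly the $Q$-fibers of these towers, so $K(x)$ has cardinality $\lvert Q\rvert$ for every $x$ (after arranging the tower shapes to be genuine translates of $Q$), and $K^{(0)} = G_\phi^{(0)} = X$; (5) estimate $\lvert (F\times X)Kx \setminus Kx\rvert$: for $x$ sitting at position $q \in Q$ in its tower, the orbit $Kx$ is $\{\phi^{q'-q}(x) : q' \in Q\}$ identified with $Q$, and $(F\times X)Kx$ adds points indexed by $F + Q$, so the excess is at most $\lvert (F+Q)\setminus Q\rvert \le \lvert(F+Q) \setminus Q\rvert$, giving the ratio $< \ep$ uniformly in $x$.

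The main obstacle will be step (3)–(4): producing a clopen tiling of $X$ by exact translates of the rectangle $Q$ (so that every $K$-orbit has the same cardinality $\lvert Q\rvert$, as required by $K$ being an elementary subgroupoid and by the denominator $\lvert K(x)\rvert$ being a clean constant). A genuine exact tiling by a single shape $Q$ need not exist, so the honest approach is to allow the towers to have several shapes $Q_1, \dots, Q_m$, each a translate-equivalent ``brick'' obtained from $Q$ by the standard trick of tiling $\Z^N$ (up to bounded error) by finitely many rectangles all close to $Q$; then $\lvert K(x)\rvert$ is locally constant rather than globally constant, which is still fine since the Følner estimate only needs $\lvert F\text{-boundary of }Q_i\rvert < \ep \lvert Q_i\rvert$ for each brick. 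Handling the freeness input rigorously — that on each piece of a sufficiently fine clopen partition the relevant finitely many translates $\phi^q$ are ``separated'' — uses compactness of $X$ and continuity, and is routine once set up; the combinatorial tiling of $\Z^N$ by near-rectangles is the part requiring the most care, but it is standard (it is essentially the content of the proof that $\Z^N$-actions on Cantor sets are ``congruent'' to products of odometers up to small error).
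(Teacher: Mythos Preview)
Your outline is correct in spirit and would work, but it takes a somewhat different route from the paper, and the step you flag as the ``main obstacle'' is exactly where the paper's argument diverges from yours.

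Both arguments begin identically: reduce to $C\subset\{(p,x):\lVert p\rVert\le n\}$ for some $n$, and then invoke a marker lemma (the paper cites \cite[Lemma 20]{LiO} or \cite[Proposition 4.4]{GMPS}) to find a clopen set $U\subset X$ whose $\phi$-translates under $\{p:\lVert p\rVert\le m\}$ are disjoint and cover $X$. From there you propose to build towers whose shapes are rectangles $Q_1,\dots,Q_m$ close to a fixed F\o lner rectangle $Q$, and you note that arranging this is the delicate point. The paper instead avoids rectangular tiles altogether: for each $x$ it considers the Voronoi tessellation of $\R^N$ with respect to the marker set $P(x)=\{p:\phi^p(x)\in U\}$, and defines $K$ so that the $K$-orbit of $x$ is (essentially) the set of lattice points in the Voronoi cell containing $x$. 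Because $P(x)$ is $m$-separated and $(m{+}1)$-syndetic, each Voronoi cell is a convex polytope containing a ball of radius $m/2$ and contained in a ball of radius $m{+}1$; the boundary estimate then follows from a purely geometric volume argument using convexity.

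What each approach buys: your F\o lner-rectangle route makes the boundary estimate trivial once the towers are in place, but pushes all the work into the tiling step, which you concede is not immediate and which you do not actually carry out. The paper's Voronoi route trades this for a slightly longer boundary estimate, but the construction of $K$ from the marker set is immediate and explicit---no auxiliary combinatorial tiling of $\Z^N$ is needed. If you want to complete your version, you should either cite a precise clopen Rokhlin/quasi-tiling statement for free $\Z^N$-actions (Forrest \cite{F} is the natural reference) or, more simply, adopt the Voronoi construction: it is exactly the device that turns your marker set into an elementary subgroupoid without further combinatorics.
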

\begin{proof}
We follow the arguments in \cite{F} (see also \cite{GMPS,GMPS2}). 
We regard $\Z^N$ as a subset of $\R^N$ and 
let $\lVert\cdot\rVert$ denote the Euclid norm on $\R^N$. 
Let $\omega$ be the volume of the closed unit ball of $\R^N$. 
We also equip $\Z^N$ with the lexicographic order. 
Namely, for $p=(p_1,p_2,\dots,p_N)$ and $q=(q_1,q_2,\dots,q_N)$ in $\Z^N$, 
$p$ is less than $q$ if there exists $i$ such that 
$p_i<q_i$ and $p_j=q_j$ for all $j<i$. 

Suppose that a compact subset $C\subset G_\phi$ and $\ep>0$ are given. 
There exists $n\in\N$ such that 
$C$ is contained in $\{(p,x)\in G_\phi\mid\lVert p\rVert\leq n,\ x\in X\}$. 
We identify $x\in X$ with $(0,x)\in G_\phi$. 
Choose $m$ sufficiently large. 
By \cite[Lemma 20]{LiO} or \cite[Proposition 4.4]{GMPS}, 
we can construct a clopen subset $U\subset X$ such that 
$\bigcup_{\lVert p\rVert\leq m}\phi^p(U)=X$ and 
$U\cap\phi^p(U)=\emptyset$ for any $p$ with $0<\lVert p\rVert\leq m$. 
For each $x\in X$, we let 
\[
P(x)=\{p\in\Z^N\mid\phi^p(x)\in U\}. 
\]
Then $P(x)\subset\R^N$ is $m$-separated and $(m{+}1)$-syndetic 
in the sense of \cite{GMPS}. 
Let $f(x)\in\Z^N$ be the minimum element of 
\[
\{p\in P(x)\mid\lVert p\rVert\leq\lVert q\rVert\quad\forall q\in P(x)\}
\]
in the lexicographic order. 
Define $K\subset G_\phi$ by 
\[
K=\{(p,x)\in G_\phi\mid f(x)=p+f(\phi^p(x))\}. 
\]
Then $K$ is an elementary subgroupoid of $G$ (see \cite{F,GMPS}). 

We would like to show that $K$ meets the requirement. 
Fix $x\in U$ and 
consider the Voronoi tessellation with respect to $P(x)$. 
Let $T$ be the Voronoi cell containing the origin, that is, 
\[
T=\{q\in\R^N\mid
\lVert q\rVert\leq\lVert q-p\rVert\quad\forall p\in P(x)\}. 
\]
Then $T$ is a convex polytope. 
Note that if $q\in\Z^N$ is in the interior of $T$, then $(q,x)$ is in $K$. 
Since $P(x)$ is $m$-separated, 
$T$ contains the closed ball of radius $m/2$ centred at the origin. 
Hence $\lvert K(x)\rvert\geq(m/2-2)^N$. 
On the other hand, 
$T$ is contained in the closed ball of radius $m{+}1$ centred at the origin, 
because $P(x)$ is $(m{+}1)$-syndetic. 
It follows that 
the volume $V$ of $T$ is not greater than $(m{+}1)^N\omega$. 
Let $B_1$ be the set of $q\in T$ 
which is within distance $n{+}1$ from the boundary of $T$ and 
let $B_2$ be the set of $q\in\R^N\setminus T$ 
which is within distance $1$ from the boundary of $T$. 
If $q\in T\cap\Z^N$ is within distance $n$ from the boundary of $T$, 
then the closed ball of radius $1/2$ centered at $q$ is 
contained in $B_1\cup B_2$. 
Since $B_1\cup B_2$ is contained in 
\[
\left\{\theta q\in\R^N\mid1-\frac{2(n+1)}{m}\leq\theta\leq1+\frac{2}{m},\
q\in T\right\}, 
\]
the volume of $B_1\cup B_2$ is less than 
\[
\left\{\left(1+\frac{2}{m}\right)^N-\left(1-\frac{2(n+1)}{m}\right)^N\right\}V
\leq\frac{((m+2)^N-(m-2n-2)^N)(m+1)^N\omega}{m^N}. 
\]
Hence 
\[
\lvert CKx\setminus Kx\rvert
\leq\frac{((m+2)^N-(m-2n-2)^N)(m+1)^N\omega}{m^N}\times((1/2)^N\omega)^{-1}. 
\]
As a consequence, by choosing $m$ sufficiently large, 
we get 
\[
\frac{\lvert CKx\setminus Kx\rvert}{\lvert K(x)\rvert}<\ep. 
\]
\end{proof}

\begin{rem}
Let $\phi:\R^N\curvearrowright\Omega$ be a free action of $\R^N$ 
on a compact, metrizable space $\Omega$ and 
let $X\subset\Omega$ be a flat Cantor transversal 
in the sense of \cite[Definition 2.1]{GMPS2}. 
As described in \cite{GMPS2}, 
we can construct an \'etale principal groupoid 
(i.e. \'etale equivalence relation) whose unit space is (homeomorphic to) $X$. 
In the same way as the lemma above, 
this groupoid is shown to be almost finite. 
\end{rem}

%%%%%%%%%%%%%%%%%%%%%%%%%%%%%%%%%%%%%%%%
\subsection{Basic facts about almost finite groupoids}

In this subsection 
we collect several basic facts about almost finite groupoids. 

\begin{lem}\label{Mneqempty}
If $G$ is almost finite, then $M(G)$ is not empty. 
\end{lem}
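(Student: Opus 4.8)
The plan is to obtain a $G$-invariant probability measure as a weak-$*$ limit of measures invariant under the elementary subgroupoids furnished by almost finiteness. Since $G$ is second countable and locally compact, it is $\sigma$-compact, so I would first fix an increasing sequence of compact sets $C_1\subset C_2\subset\cdots$ with $\bigcup_n C_n=G$ and, since $G^{(0)}$ is compact, $G^{(0)}\subset C_1$. Applying Definition~\ref{almstfnt} to $C_n$ and $\ep=1/n$ yields an elementary subgroupoid $K_n\subset G$ with $\lvert C_nK_nx\setminus K_nx\rvert<\tfrac1n\lvert K_n(x)\rvert$ for every $x\in G^{(0)}$.

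The second ingredient is that every elementary subgroupoid $K$ carries an invariant probability measure. Here I would use that $x\mapsto\lvert K(x)\rvert$ is locally constant and write $K$ as a finite disjoint union of compact open $K$-sets $U_j$ on which $\lvert K(\cdot)\rvert$ is constant; since each $\tau_{U_j}$ moves a point within its own $K$-orbit, for any probability measure $\nu$ on $G^{(0)}$ the orbit-average $\mu(f)=\int\bigl(\lvert K(x)\rvert^{-1}\sum_{y\in K(x)}f(y)\bigr)\,d\nu(x)$ lies in $M(K)$. The same decomposition, together with $K$-invariance, gives for an arbitrary $\mu\in M(K)$ and a clopen $E\subset G^{(0)}$ the identity $\mu(E)=\int\lvert K(x)\rvert^{-1}\lvert K(x)\cap E\rvert\,d\mu(x)$; this is the device that transfers a pointwise count on $K$-orbits to the measure. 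I would then pick $\mu_n\in M(K_n)$ and pass to a weak-$*$ convergent subsequence $\mu_{n_j}\to\mu$, which is automatically a probability measure on $G^{(0)}$.

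It remains to check $\mu\in M(G)$. Because $G$ has a countable base of compact open $G$-sets and every open $G$-set is an increasing union of such, it suffices to prove $\mu(r(U))=\mu(s(U))$ for each compact open $G$-set $U$. Set $C=U\cup U^{-1}\cup G^{(0)}$ and take $n$ with $C\subset C_n$. The crucial combinatorial estimate is
\[
\bigl\lvert\,\lvert K_n(x)\cap r(U)\rvert-\lvert K_n(x)\cap s(U)\rvert\,\bigr\rvert<\tfrac1n\lvert K_n(x)\rvert\qquad\text{for all }x\in G^{(0)}.
\]
To establish it, note that $\tau_U$ injects $K_n(x)\cap s(U)$ into $r(U)$, and if $y$ lies in this set, say $y=r(g)=s(h)$ with $g\in K_n$, $s(g)=x$, $h\in U$, then $hg\in CK_nx$ has range $\tau_U(y)$; hence $\tau_U(y)\in r(CK_nx)$, a set which (as $G^{(0)}\subset C$ forces $K_nx\subset CK_nx$) exceeds $K_n(x)$ by fewer than $\tfrac1n\lvert K_n(x)\rvert$ points. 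This gives $\lvert K_n(x)\cap s(U)\rvert\le\lvert K_n(x)\cap r(U)\rvert+\tfrac1n\lvert K_n(x)\rvert$, and the reverse inequality comes from the same argument applied to $U^{-1}$. Integrating against $\mu_n$ and invoking the identity of the previous paragraph yields $\lvert\mu_n(r(U))-\mu_n(s(U))\rvert\le1/n$; letting $n=n_j\to\infty$ and using weak-$*$ convergence gives $\mu(r(U))=\mu(s(U))$.

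I expect the main obstacle to be exactly the combinatorial comparison displayed above: one must convert the F\o lner-type inequality ``$C_nK_nx$ is almost $K_nx$'' into control on how far $\tau_U$ displaces $K_n$-orbit mass, and then pass from this orbitwise count to an estimate between the measures $\mu_n$. The remaining points---$\sigma$-compactness of $G$, existence of $K_n$-invariant measures, and the weak-$*$ compactness argument---are routine.
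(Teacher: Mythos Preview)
Your proof is correct and follows the same strategy as the paper's: pick $\mu_n\in M(K_n)$ for elementary subgroupoids $K_n$ furnished by almost finiteness, pass to a weak-$*$ limit, and verify invariance on compact open $G$-sets via the F\o lner estimate. The only cosmetic difference is that the paper splits $U=(U\cap K_n)\cup(U\setminus K_n)$, uses $K_n$-invariance of $\mu_n$ on the first piece, and applies Lemma~\ref{cptHopf}(1) to bound $\mu_n(s(U\setminus K_n))$ and $\mu_n(r(U\setminus K_n))$ by $1/n$, whereas you bound $\bigl\lvert\,\lvert K_n(x)\cap r(U)\rvert-\lvert K_n(x)\cap s(U)\rvert\,\bigr\rvert$ directly and invoke the orbit-averaging identity, which is precisely what underlies Lemma~\ref{cptHopf}(1).
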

\begin{proof}
Take an increasing sequence of compact open subsets 
$C_1\subset C_2\subset\dots$ whose union is equal to $G$. 
For each $n\in\N$, 
there exists an elementary subgroupoid $K_n\subset G$ such that 
\[
\frac{\lvert C_nK_nx\setminus K_nx\rvert}{\lvert K_n(x)\rvert}<\frac{1}{n}
\]
for all $x\in G^{(0)}$. 
Clearly $M(K_n)$ is not empty, and so we can choose $\mu_n\in M(K_n)$. 
By taking a subsequence if necessary, 
we may assume that $\mu_n$ converges to a probability measure $\mu$. 
We would like to show that $\mu$ belongs to $M(G)$. 

Let $U$ be a compact open $G$-set. 
For sufficiently large $n\in\N$, $C_n$ contains $U\cup U^{-1}$. 
For $x\in G^{(0)}$, one has 
\[
\lvert K_n(x)\cap s(U\setminus K_n)\rvert
=\lvert (U\setminus K_n)K_nx\rvert=\lvert UK_nx\setminus K_nx\rvert, 
\]
which is less than $n^{-1}\lvert K_n(x)\rvert$ 
when $C_n$ contains $U$. 
It follows from Lemma \ref{cptHopf} (1) that 
$\mu_n(s(U\setminus K_n))$ is less than $n^{-1}$. 
Similarly, when $C_n$ contains $U^{-1}$, 
one can see that $\mu_n(r(U\setminus K_n))$ is less than $n^{-1}$. 
Since $\mu_n$ is $K_n$-invariant, 
we have $\mu_n(r(U\cap K_n))=\mu_n(s(U\cap K_n))$. 
Consequently, $\lvert\mu_n(r(U))-\mu_n(s(U))\rvert$ is less than $2/n$ 
for sufficiently large $n$, which implies $\mu(r(U))=\mu(s(U))$. 
Therefore $\mu$ is $G$-invariant. 
\end{proof}

\begin{rem}
Let $G'$ be the isotropy bundle. 
Take $\mu\in M(G)$ arbitrarily. 
In the proof above, 
$U\cap(G'\setminus G^{(0)})$ is contained in $U\setminus K_n$, 
because $G'\setminus G^{(0)}$ does not intersect with $K_n$. 
Hence $\mu(r(U\cap(G'\setminus G^{(0)})))$ is less than $1/n$, 
which implies $\mu(r(U\cap(G'\setminus G^{(0)})))=0$. 
Since $G$ has a countable base of compact open $G$-sets, 
we have $\mu(r(G'\setminus G))=0$ for all $\mu\in M(G)$. 
Assume further that $G$ is minimal. 
From Lemma \ref{Gfull} below, 
$\mu(U)$ is positive for any non-empty open subset $U\subset G^{(0)}$. 
It follows that $r(G'\setminus G^{(0)})$ contains no interior points, and so 
the set of points of $G^{(0)}$ with trivial isotropy is dense in $G^{(0)}$. 
Thus, if $G$ is minimal and almost finite, then 
$G$ is essentially principal. 
\end{rem}

\begin{lem}\label{Blackadar}
Suppose that $G$ is almost finite. 
If two clopen subsets $U,V\subset G^{(0)}$ satisfy 
$\mu(U)<\mu(V)$ for all $\mu\in M(G)$, then 
there exists $\gamma\in[[G]]$ such that $\gamma(U)\subset V$. 
In fact, one can find such $\gamma$ so that 
$\gamma^2=\id$ and $\gamma(x)=x$ for $x\in G^{(0)}\setminus(U\cup\gamma(U))$. 
\end{lem}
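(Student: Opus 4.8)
The plan is to reduce the statement to a comparison of orbit-counting functions inside a single elementary subgroupoid, and then invoke Lemma \ref{cptHopf} (2) there. First I would observe that since $\overline{M(G)}$ is weak-$*$ compact (it is a closed subset of the probability measures on the compact space $G^{(0)}$) and $\mu\mapsto\mu(V)-\mu(U)$ is continuous, the hypothesis $\mu(U)<\mu(V)$ for all $\mu\in M(G)$ yields a uniform gap: there is $c>0$ with $\mu(V)-\mu(U)\geq c$ for every $\mu\in M(G)$. This uniformity is what will let a finitary approximation succeed.

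Next I would choose the approximating elementary subgroupoid. Fix a large compact open set $C\subset G$ and apply almost finiteness to get an elementary subgroupoid $K\subset G$ with $\lvert CKx\setminus Kx\rvert<\ep\lvert K(x)\rvert$ for all $x$, where $\ep$ is small relative to $c$. The key point is to translate the measure-theoretic gap into a combinatorial inequality of the form
\[
\lvert K(x)\cap U\rvert + (\text{error term}) \leq \lvert K(x)\cap V\rvert
\]
for every $x\in G^{(0)}$. The heuristic is that $\lvert K(x)\cap W\rvert/\lvert K(x)\rvert$ should be close to $\mu(W)$ for $K$-invariant (hence, up to the small boundary error, approximately $G$-invariant) measures; more precisely one compares $\lvert K(x)\cap W\rvert$ against integrals over the $K$-orbit and uses Lemma \ref{cptHopf} (1) applied to $K$ together with the fact that $K$-invariant measures are nearly $G$-invariant when $K$ absorbs a large compact set, as in the proof of Lemma \ref{Mneqempty}. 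Once I have, for every $x$, the clean inequality $\lvert K(x)\cap U\rvert\leq\lvert K(x)\cap V\rvert$ with a genuine slack of at least one on each orbit where $U$ is nonempty, I apply Lemma \ref{cptHopf} (2) to the principal compact groupoid $K$ with the single target set $V$: this produces a compact open $K$-set $C_1$ with $r(C_1)=U$ and $s(C_1)\subset V$. Then $\gamma_0=\tau_{C_1}$ lies in $[[K]]\subset[[G]]$ and satisfies $\gamma_0(U)\subset V$.

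Finally I would upgrade $\gamma_0$ to an involution supported on $U\cup\gamma_0(U)$. Since $U$ and $\gamma_0(U)$ are clopen and $\gamma_0(U)\subset V$, I want $U\cap\gamma_0(U)=\emptyset$ so that the obvious candidate $\gamma=\gamma_0$ on $U$, $\gamma=\gamma_0^{-1}$ on $\gamma_0(U)$, and $\gamma=\id$ elsewhere is a well-defined involution; this is arranged by also demanding $\mu(U\cup V)$-type room, i.e. by first shrinking/relocating $V$ using Lemma \ref{Blackadar}'s predecessor results so that $U$ and the image are disjoint — concretely, apply the construction inside $K$ to route $U$ into $V\setminus U$, which is possible because the gap $c$ dominates $\mu(U)$ as well (replace $V$ by $V\setminus U$ at the cost of $\mu(U)$ in the gap, or split $U$ into two halves and handle each). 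With disjointness in hand, $\gamma^2=\id$ and $\gamma$ fixes $G^{(0)}\setminus(U\cup\gamma(U))$ pointwise by construction, and $\gamma=\tau_{C_1\cup C_1^{-1}\cup(G^{(0)}\setminus(U\cup\gamma_0(U)))}$ exhibits $\gamma\in[[G]]$.

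The main obstacle I expect is the second paragraph: converting "$\mu(U)<\mu(V)$ for all $G$-invariant $\mu$ with uniform gap" into the pointwise combinatorial inequality $\lvert K(x)\cap U\rvert\leq\lvert K(x)\cap V\rvert$ for \emph{every} $x$, not merely on average. The subtlety is that a single $K$-orbit need not see the global measure statistics, so one must argue that the fraction $\lvert K(x)\cap W\rvert/\lvert K(x)\rvert$ cannot deviate much from the range of $\{\mu(W):\mu\in M(G)\}$ — this is where almost finiteness (the small $CKx\setminus Kx$ bound, making $K$-invariant orbit-averages approximately $G$-invariant) must be used quantitatively, likely by pushing forward the normalized counting measure on $K(x)$ and estimating how far it is from $M(G)$, exactly as $\mu_n\to\mu\in M(G)$ in Lemma \ref{Mneqempty} but now with explicit rate control.
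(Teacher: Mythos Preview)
Your overall architecture is correct and matches the paper's, but you make two steps harder than necessary, and in one place you state something false.

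\textbf{Disjointness.} The reduction is much simpler than you suggest: replace $U,V$ by $U\setminus V,\ V\setminus U$ at the very start. The strict inequality $\mu(U\setminus V)<\mu(V\setminus U)$ is preserved, the new sets are disjoint, and any involution carrying $U\setminus V$ into $V\setminus U$ and fixing everything else automatically carries the original $U$ into the original $V$. Your claim that ``the gap $c$ dominates $\mu(U)$'' is in general false and is not needed.

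\textbf{The main step.} You correctly isolate the real issue---passing from the measure inequality on $M(G)$ to the orbitwise inequality $\lvert K(x)\cap U\rvert\leq\lvert K(x)\cap V\rvert$---but then you propose to attack it by explicit, quantitative rate control on how far the normalized counting measure on $K(x)$ sits from $M(G)$. That is delicate and unnecessary. The paper's argument runs your own final paragraph \emph{without} the rate control: take $C_n\nearrow G$ and $K_n$ as in Lemma~\ref{Mneqempty}; if for every $n$ there existed $\mu_n\in M(K_n)$ with $\mu_n(U)\geq\mu_n(V)$, a weak-$*$ limit point would lie in $M(G)$ (by the proof of Lemma~\ref{Mneqempty}) and violate the hypothesis. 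Hence for some $n$ one has $\mu(U)<\mu(V)$ for all $\mu\in M(K_n)$. Since the extreme points of $M(K_n)$ are exactly the normalized counting measures on $K_n$-orbits, this \emph{is} the orbitwise inequality, and Lemma~\ref{cptHopf}\,(2) applied to $K_n$ produces the compact open $K_n$-set $C$ with $r(C)=U$, $s(C)\subset V$. Then $\gamma=\tau_{C\cup C^{-1}\cup(G^{(0)}\setminus(r(C)\cup s(C)))}$ is the required involution. So the contradiction/compactness argument replaces your quantitative estimate entirely; no uniform gap $c$ is ever used explicitly.
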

\begin{proof}
By removing $U\cap V$ if necessary, 
we may assume that $U$ and $V$ are disjoint. 
Let $C_n$ and $K_n$ be as in Lemma \ref{Mneqempty}. 
Suppose that for each $n\in\N$ 
there exists $\mu_n\in M(K_n)$ such that $\mu_n(U)\geq\mu_n(V)$. 
By taking a subsequence if necessary, 
we may assume that $\mu_n$ converges to $\mu$. 
By the proof of Lemma \ref{Mneqempty}, we have $\mu\in M(G)$. 
This, together with $\mu(U)\geq\mu(V)$, contradicts the assumption. 
It follows that 
there exists $n\in\N$ such that 
$\mu(U)<\mu(V)$ for all $\mu\in M(K_n)$. 
Lemma \ref{cptHopf} (2) applies and yields 
a compact open $K_n$-set $C$ such that $r(C)=U$ and $s(C)\subset V$. 
Set $D=C\cup C^{-1}\cup(G^{(0)}\setminus(r(C)\cup s(C)))$. 
Then $\gamma=\tau_D$ is the desired element. 
\end{proof}

\begin{lem}\label{Gfull}
Suppose that $G$ is almost finite. 
For a clopen subset $U\subset G^{(0)}$, 
the following are equivalent. 
\begin{enumerate}
\item $U$ is $G$-full. 
\item There exists $c>0$ such that $\mu(U)>c$ for all $\mu\in M(G)$. 
\item $\mu(U)>0$ for every $\mu\in M(G)$. 
\end{enumerate}
In particular, if $G$ is minimal, then $\mu(U)>0$ 
for any non-empty clopen subset $U\subset G^{(0)}$ and $\mu\in M(G)$. 
\end{lem}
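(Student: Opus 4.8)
The plan is to prove the cycle of implications (1)$\Rightarrow$(2)$\Rightarrow$(3)$\Rightarrow$(1), since (1)$\Rightarrow$(2) is where almost finiteness does the real work and the other two are soft. For (2)$\Rightarrow$(3) there is nothing to do. For (3)$\Rightarrow$(1), I would argue contrapositively: if $U$ is not $G$-full, then $F=G^{(0)}\setminus r^{-1}(s^{-1}(U))$ is a non-empty open (in fact, by second countability and looking at compact open $G$-sets, one checks it is even $G^{(0)}$ minus an $F_\sigma$, but more simply) $G$-invariant-type set disjoint from the saturation of $U$; concretely, the set of $x$ with $r^{-1}(x)\cap s^{-1}(U)=\emptyset$ is a non-empty closed set whose interior $W$ is non-empty (here I would use that $G$ has a countable base of compact open $G$-sets, so the saturation $r(s^{-1}(U))$ is open, hence its complement is closed; if it is all of $G^{(0)}$ then $U$ is full, so otherwise the complement is a non-empty clopen set, as $r(s^{-1}(U))$ is also closed when one intersects finitely — actually one takes a non-empty clopen $W$ inside the complement). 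Since $W$ is clopen and $G$-invariant in the measure sense (no $G$-set connects $W$ to $U$), by Lemma \ref{Mneqempty} $M(G|W)$ is non-empty, and any $\nu\in M(G|W)$ extends by zero to a $\mu\in M(G)$ with $\mu(U)=0$, contradicting (3).

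The heart is (1)$\Rightarrow$(2). Here I would run the compactness-and-averaging machine already used in Lemma \ref{Mneqempty} and Lemma \ref{Blackadar}. Fix an increasing exhaustion $C_1\subset C_2\subset\cdots$ of $G$ by compact open sets and, for each $n$, an elementary subgroupoid $K_n$ with $\lvert C_nK_nx\setminus K_nx\rvert<n^{-1}\lvert K_n(x)\rvert$ for all $x$. Since $U$ is $G$-full, there are finitely many compact open $G$-sets $O_1,\dots,O_k$ with $s(O_j)\subset U$ and $\bigcup_j r(O_j)=G^{(0)}$; put $L=\bigcup_j(O_j\cup O_j^{-1})$, a compact open subset of $G$. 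For $n$ large enough that $C_n\supset L$, I claim there is a uniform lower bound $\lvert K_n(x)\cap U\rvert\geq\delta\lvert K_n(x)\rvert$ for all $x$, with $\delta$ independent of $n$: indeed fullness via the $O_j$'s forces every $K_n$-orbit point to be $L$-connected into $U$, and the F\o lner-type estimate for $C_n\supset L$ says only an $n^{-1}$-fraction of $K_n x$ is moved outside $K_n x$ by $L$, so a definite fraction of $K_n x$ must already lie over $U$ — quantitatively, one partitions $K_n x$ according to which $O_j$ carries it into $U$ and bounds the overlap, much as in Lemma \ref{cptHopf}(2). Then Lemma \ref{cptHopf}(1) (applied to $G|K_n^{(0)}=K_n$) gives $\mu_n(U)\geq\delta$ for every $\mu_n\in M(K_n)$, and passing to a weak-$*$ limit along a subsequence with $\mu_n\to\mu$ — which lies in $M(G)$ by the argument in Lemma \ref{Mneqempty} — yields $\mu(U)\geq\delta$ for the limit; a standard compactness argument on the (weak-$*$ compact, since $G^{(0)}$ is compact metrizable) set $M(G)$ then upgrades this to the uniform bound $\mu(U)>c$ for all $\mu\in M(G)$ with, say, $c=\delta/2$, provided one checks that every $\mu\in M(G)$ arises (or is approximated) suitably — alternatively, one argues directly: if $\inf_{\mu\in M(G)}\mu(U)=0$ then by compactness some $\mu_0\in M(G)$ has $\mu_0(U)=0$, but then choosing $K_n$ for $C_n\supset L$ and noting $\mu_0$ restricted to each $K_n$ is $K_n$-invariant contradicts $\lvert K_n(x)\cap U\rvert\geq\delta\lvert K_n(x)\rvert$ together with Lemma \ref{cptHopf}(1).

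The main obstacle I anticipate is making the uniform constant $\delta$ in step (1)$\Rightarrow$(2) genuinely uniform over both $x$ and $n$: one must be careful that the combinatorial bound relating $\lvert K_n(x)\cap U\rvert$ to $\lvert K_n(x)\rvert$ uses only the data $L$ and the F\o lner ratio $n^{-1}$, not anything orbit-dependent. Concretely, for each $x$ every point $y\in K_n(x)$ has some $g\in L$ with $r(g)=y$ and $s(g)\in U$; the map $y\mapsto s(g)$ is at most $\lvert L\rvert$-to-one in an appropriate sense after decomposing $L$ into finitely many $G$-sets, so $\lvert K_n(x)\rvert\leq (\text{const})\cdot\lvert\{y\in K_n(x): s(g)\in K_n(x)\}\rvert + \lvert L K_n x\setminus K_n x\rvert$, and the second term is $<n^{-1}\lvert K_n(x)\rvert$. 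Rearranging gives $\lvert K_n(x)\cap U\rvert$ bounded below by a fixed fraction of $\lvert K_n(x)\rvert$ once $n$ is large. Finally, the last sentence of the lemma is immediate: if $G$ is minimal then every non-empty clopen set is $G$-full (its saturation is open, dense, and clopen, hence everything), so (3) applies.
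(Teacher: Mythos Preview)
You have the location of the key hypothesis backwards. The paper proves (1)$\Rightarrow$(2) without any use of almost finiteness: since $U$ is $G$-full and clopen, Lemma~\ref{tower1} gives $f\in C(U,\Z)$ with $G\cong(G|U)_f$, and then for any $\mu\in M(G)$ one has $1=\mu(G^{(0)})\leq(n{+}1)\mu(U)$ with $n=\max f$, so $\mu(U)\geq 1/(n{+}1)$. Your F{\o}lner-style argument for (1)$\Rightarrow$(2) can be made to work (the second variant, using $M(G)\subset M(K_n)$ and the orbit bound $|K_n(x)\cap U|\geq\delta|K_n(x)|$, is correct), but it is considerably more elaborate than necessary.

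The genuine gap is in your (3)$\Rightarrow$(1). You need a non-empty clopen $G$-invariant $W$ inside the closed set $G^{(0)}\setminus V$, where $V=r(s^{-1}(U))$, and you do not establish that one exists. The set $G^{(0)}\setminus V$ is closed and $G$-invariant but can have empty interior (think of a free $\Z$-action on a Cantor set whose unique minimal set is a proper closed non-clopen subset, and take $U$ a clopen set disjoint from it). Even if you find a clopen $W\subset G^{(0)}\setminus V$, it need not be $G$-invariant, so extending $\nu\in M(G|W)$ by zero need not give a $G$-invariant measure; your remark that ``no $G$-set connects $W$ to $U$'' is not enough---you would need no $G$-set to connect $W$ to its complement. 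Finally, you invoke Lemma~\ref{Mneqempty} for $G|W$, but almost finiteness of $G$ does not obviously pass to $G|W$ for an arbitrary clopen $W$. The paper avoids all of this by a direct construction: pick any $x\in G^{(0)}\setminus V$, form the orbit-averaging measures $\mu_n=|K_n(x)|^{-1}\sum_{y\in K_n(x)}\delta_y$ (which satisfy $\mu_n(V)=0$ since $K_n(x)\subset G(x)\subset G^{(0)}\setminus V$), and take a weak-$*$ limit, which lies in $M(G)$ by the proof of Lemma~\ref{Mneqempty}. This is exactly where almost finiteness is used.
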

\begin{proof}
(1)$\Rightarrow$(2). 
Suppose that a clopen subset $U\subset G^{(0)}$ is $G$-full. 
By Lemma \ref{tower1}, there exists $f\in C(Y,\Z)$ such that 
$G$ and $(G|Y)_f$ are isomorphic. 
Put $n=\max\{f(x)\mid x\in U\}$.
Then $1=\mu(G^{(0)})\leq(n+1)\mu(Y)$ for any $\mu\in M(G)$, 
which implies (2). 

(2)$\Rightarrow$(3) is trivial. 

(3)$\Rightarrow$(1). 
We need the hypothesis of almost finiteness for this implication. 
Suppose that $U$ is not $G$-full. 
Let $V=r(s^{-1}(U))$. 
Then $V$ is an open subset such that 
$U\subset V\neq G^{(0)}$ and $r(s^{-1}(V))=V$. 
Let $C_n$ and $K_n$ be as in Lemma \ref{Mneqempty}. 
Take $x\in G^{(0)}\setminus V$ and put 
\[
\mu_n=\frac{1}{\lvert K_n(x)\rvert}\sum_{y\in K_n(x)}\delta_y, 
\]
where $\delta_y$ is the Dirac measure on $y$. 
Then $\mu_n$ is $K_n$-invariant and $\mu_n(V)=0$. 
By taking a subsequence if necessary, 
we may assume that $\mu_n$ converges to $\mu$. 
From the proof of Lemma \ref{Mneqempty}, $\mu$ is $G$-invariant. 
But $\mu(U)\leq\mu(V)=\lim\mu_n(V)=0$, which contradicts (3). 
\end{proof}

\begin{lem}\label{positive}
Suppose that $G$ is almost finite and minimal. 
For $f\in C(G^{(0)},\Z)$, one has 
$[f]\in H_0(G)^+\setminus\{0\}$ if and only if 
$\hat\mu([f])>0$ for every $\mu\in M(G)$. 
\end{lem}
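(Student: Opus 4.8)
The plan is to prove the two implications separately: the ``only if'' part is a one-line consequence of minimality, while the ``if'' part is where almost finiteness enters, through approximation by elementary subgroupoids.

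Assume first that $[f]\in H_0(G)^+\setminus\{0\}$. Choose $g\in C(G^{(0)},\Z)$ with $g\geq0$ and $[g]=[f]$. Since $[f]\neq0$ we have $g\not\equiv0$, so $U=\{x\in G^{(0)}\mid g(x)\geq1\}$ is a non-empty clopen set, and by minimality the last assertion of Lemma \ref{Gfull} gives $\mu(U)>0$ for every $\mu\in M(G)$. Hence $\hat\mu([f])=\int g\,d\mu\geq\mu(U)>0$, as wanted; this direction uses no new ideas.

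Now assume $\hat\mu([f])>0$ for every $\mu\in M(G)$. Since $M(G)\neq\emptyset$ by Lemma \ref{Mneqempty} we already have $[f]\neq0$, so it remains to show $[f]\in H_0(G)^+$. The first step is a reduction: it suffices to exhibit an elementary subgroupoid $K\subset G$ with $\sum_{y\in K(x)}f(y)\geq0$ for all $x\in G^{(0)}$. Indeed, for such a $K$ I claim $[f]_K\in H_0(K)^+$, writing $[\,\cdot\,]_K$ for the class in $H_0(K)$; the open inclusion $\iota\colon K\hookrightarrow G$ is \'etale, $\iota^{(0)}$ is the identity of $G^{(0)}$, so the induced map $H_0(\iota)$ sends $[f]_K$ to $[f]$ and carries $H_0(K)^+$ into $H_0(G)^+$, and $[f]\in H_0(G)^+$ follows. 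To prove the claim, recall that $K$, being compact and principal, is Kakutani equivalent to some $H$ with $H=H^{(0)}$ by Lemma \ref{cptKakutani}; unwinding the definition, there is a clopen set $Y\subset G^{(0)}$ meeting every $K$-orbit in exactly one point, and by Lemma \ref{tower1} an isomorphism $K\cong(K|Y)_{g_0}=Y_{g_0}$ for some $g_0\in C(Y,\Z)$ with $g_0\geq0$. A direct computation shows that $H_0(Y_{g_0})$ is identified with $C(Y,\Z)$ by sending the class of $h$ to $y\mapsto\sum_{i=0}^{g_0(y)}h(y,i,i)$, i.e.\ to the sum of $h$ over the corresponding $K$-orbit; hence $[f]_K\in H_0(K)^+$ precisely when all these orbit sums are nonnegative (a nonnegative representative is obtained by moving all the mass of each orbit to level $0$), which is our hypothesis on $K$.

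It remains to produce such a $K$. Suppose none exists. Fix an increasing sequence of compact open subsets $C_1\subset C_2\subset\cdots$ with union $G$ and, using almost finiteness, for each $n$ an elementary subgroupoid $K_n\subset G$ with $\lvert C_nK_nx\setminus K_nx\rvert<n^{-1}\lvert K_n(x)\rvert$ for all $x$. By assumption there is $x_n\in G^{(0)}$ with $\sum_{y\in K_n(x_n)}f(y)<0$, so the orbit average $\nu_n=\lvert K_n(x_n)\rvert^{-1}\sum_{y\in K_n(x_n)}\delta_y$ lies in $M(K_n)$ and satisfies $\int f\,d\nu_n<0$. Passing to a weak$^*$-convergent subsequence $\nu_n\to\nu$, the argument in the proof of Lemma \ref{Mneqempty} shows $\nu\in M(G)$, whence $\int f\,d\nu=\lim_n\int f\,d\nu_n\leq0$, contradicting the hypothesis. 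So a suitable $K$ exists and $[f]\in H_0(G)^+$. The main obstacle is the claim in the previous paragraph, namely the explicit description of $H_0(K)^+$ for an elementary groupoid $K$ as the cone of functions with nonnegative orbit sums; the rest is the short minimality argument and a reprise of the compactness argument already used for Lemma \ref{Mneqempty}.
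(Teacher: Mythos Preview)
Your proof is correct and follows essentially the same strategy as the paper: prove the ``only if'' direction via Lemma \ref{Gfull}, and for the ``if'' direction use almost finiteness together with a weak$^*$-compactness argument (as in Lemma \ref{Mneqempty}) to land in a single elementary subgroupoid $K$, show $[f]_K\in H_0(K)^+$, and then push forward to $H_0(G)^+$. The only difference is packaging: where you compute $H_0(K)^+$ by hand via the isomorphism $K\cong Y_{g_0}$ and orbit sums, the paper simply invokes Lemma \ref{cptHopf}~(4), which is exactly the statement that $[f]_K\in H_0(K)^+$ if and only if $\int f\,d\mu\geq0$ for all $\mu\in M(K)$ --- and since the extremal $K$-invariant measures are precisely the normalized orbit counting measures, this is equivalent to your nonnegativity of orbit sums.
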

\begin{proof}
The `only if' part easily follows from the lemma above. 
We prove the `if' part. 
Let $C_n$ and $K_n$ be as in Lemma \ref{Mneqempty}. 
In the same way as above, we can find $n\in\N$ such that 
$\displaystyle\int f\,d\mu>0$ for every $\mu\in M(K_n)$. 
By Lemma \ref{cptHopf} (4), 
$[f]$ is in $H_0(K_n)^+$, and hence in $H_0(G)^+$. 
\end{proof}

For the terminologies about ordered abelian groups 
in the following statement, 
we refer the reader to \cite{Ro}. 

\begin{prop}\label{Riesz}
Suppose that $G$ is almost finite and minimal. 
Then $(H_0(G),H_0(G)^+)$ is 
a simple, weakly unperforated, ordered abelian group 
with the Riesz interpolation property. 
\end{prop}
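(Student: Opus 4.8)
The plan is to verify the four required properties of the ordered abelian group $(H_0(G),H_0(G)^+)$ one at a time, leaning heavily on Lemma \ref{positive}, which identifies $H_0(G)^+\setminus\{0\}$ with the set of classes $[f]$ satisfying $\hat\mu([f])>0$ for all $\mu\in M(G)$, together with Lemma \ref{Mneqempty} (so that $M(G)\neq\emptyset$) and Lemma \ref{Gfull}. The essential point throughout is that the collection of homomorphisms $\{\hat\mu\mid\mu\in M(G)\}$ is rich enough to \emph{detect} positivity, which converts each ordered-group axiom into a statement about real-valued integrals that can be handled by elementary measure-theoretic arguments and a bit of compactness/convexity of $M(G)$.

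First I would record that $(H_0(G),H_0(G)^+)$ is genuinely an ordered abelian group, i.e. $H_0(G)^+\cap(-H_0(G)^+)=\{0\}$: if $[f]$ and $-[f]$ are both in $H_0(G)^+$ and $[f]\neq 0$, then Lemma \ref{positive} forces both $\hat\mu([f])>0$ and $\hat\mu([f])<0$ for every $\mu\in M(G)$, impossible since $M(G)\neq\emptyset$. Next, \emph{simplicity}: I must show every nonzero $[f]\in H_0(G)^+$ is an order unit. Writing $f=\sum_i n_i 1_{U_i}$ with the $U_i$ clopen and nonempty and $n_i>0$ for the relevant pieces, minimality plus Lemma \ref{Gfull} (its last sentence) gives $\mu(U_i)>0$ for all $\mu$, hence a uniform lower bound $\hat\mu([f])\geq c>0$ over the compact set $M(G)$; then for any $[g]$, the function $\hat\mu([g])$ is bounded above on $M(G)$, so some large multiple $N[f]-[g]$ has strictly positive integral against every $\mu$, and Lemma \ref{positive} (applied to whichever of $N[f]\pm[g]$ is relevant, after dealing with the degenerate case) shows $N[f]\pm[g]\in H_0(G)^+$. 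For \emph{weak unperforation} I take $[f]$ with $n[f]\in H_0(G)^+$, $n\geq 1$; if $[f]=0$ there is nothing to prove, and otherwise $n\hat\mu([f])>0$ for all $\mu$ forces $\hat\mu([f])>0$ for all $\mu$, whence $[f]\in H_0(G)^+$ again by Lemma \ref{positive}.

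The \emph{Riesz interpolation property} is the one I expect to be the main obstacle, because it is not a pointwise statement about measures — knowing the inequalities $\hat\mu(a_i)\leq\hat\mu(b_j)$ for all $\mu$ does not by itself manufacture an interpolating \emph{group} element. My plan is to reduce to a single-measure, or rather single-tower, situation: given $a_1,a_2\leq b_1,b_2$ in $H_0(G)$, represent the differences by nonnegative integer-valued functions and, using that any compact subset of $G$ is almost contained in an elementary subgroupoid $K$, pass to the finite (compact principal) groupoid $K$ where $H_0(K)$ is explicitly a simplicial-type group with interpolation. Concretely, one writes all four elements in terms of $1_U$'s for a fixed clopen partition refined enough to lie in $K^{(0)}=G^{(0)}$, applies the combinatorial version of Lemma \ref{cptHopf} (parts (2)--(4)) inside $K$ to produce a clopen set (equivalently a $\{0,1\}$-valued, or bounded, function) realizing the interpolation at the level of $K$-orbit counts, and then checks that this element still interpolates in $H_0(G)$ because the inclusion $K\hookrightarrow G$ induces a map $H_0(K)\to H_0(G)$ compatible with the measures in $M(G)$ (each $\mu\in M(G)$ restricts to a $K$-invariant measure). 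The delicate bookkeeping is making the chosen $K$ simultaneously swallow the supports of all the data and be fine enough for the partition; this is exactly the kind of "fix a large enough elementary subgroupoid" maneuver already used in the proofs of Lemmas \ref{Mneqempty}, \ref{Blackadar}, \ref{Gfull} and \ref{positive}, so I would model the argument on those. If the direct combinatorial route proves awkward, the fallback is to invoke the fact that $H_0$ of a compact principal étale groupoid on a totally disconnected space is a dimension group (via Lemma \ref{cptKakutani}, Theorem \ref{Kakutani>sim} and Theorem \ref{AFHomology1}), and dimension groups have Riesz interpolation; one then transports interpolation from the approximating $H_0(K_n)$ to $H_0(G)=\varinjlim$-type behaviour, again using that $M(G)$ detects the order.
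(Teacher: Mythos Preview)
Your proposal is correct and follows essentially the same route as the paper: use Lemma~\ref{positive} to reduce the ordered-group, simplicity, and weak-unperforation statements to inequalities for the states $\hat\mu$, and for Riesz interpolation pass to an elementary subgroupoid $K$ (found via the compactness/limit argument of Lemma~\ref{Mneqempty}) in which the inequalities persist and an interpolant can be built. Two small points where the paper is sharper: it first reduces to the case $[f_i]\neq[g_j]$ so that Lemma~\ref{positive} yields \emph{strict} inequalities $\hat\mu([f_i])<\hat\mu([g_j])$ (without this the passage to $M(K_n)$ does not go through), and instead of invoking abstract interpolation in $H_0(K_n)$ it writes down the interpolant explicitly as $h(y)=\max\bigl\{\sum_{x\in K_n(y)}f_1(x),\ \sum_{x\in K_n(y)}f_2(x)\bigr\}$ on a transversal $Y$ for $K_n$, then applies Lemma~\ref{cptHopf}(4).
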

\begin{proof}
By virtue of Lemma \ref{positive}, 
one can see that $(H_0(G),H_0(G)^+)$ is 
a simple, weakly unperforated, ordered abelian group. 
We would like to check the Riesz interpolation property. 
To this end, take $f_1,f_2,g_1,g_2\in C(G^{(0)},\Z)$ 
satisfying $[f_i]\leq[g_j]$ for $i,j=1,2$. 
It suffices to find $h\in C(G^{(0)},\Z)$ 
such that $[f_i]\leq[h]\leq[g_j]$ for $i,j=1,2$. 
Clearly we may assume $[f_i]\neq[g_j]$ for $i,j=1,2$. 
Therefore $g_j-f_i$ is in $H_0(G)^+\setminus\{0\}$ for any $i,j=1,2$. 
By Lemma \ref{positive}, 
we get $\hat\mu([f_i])<\hat\mu([g_j])$ for any $i,j=1,2$. 
Let $C_n$ and $K_n$ be as in Lemma \ref{Mneqempty}. 
In the same way as above, we can find $n\in\N$ such that 
\[
\int f_i\,d\mu<\int g_j\,d\mu
\]
for every $\mu\in M(K_n)$ and $i,j=1,2$. 
By Lemma \ref{cptKakutani}, 
there exists a $K_n$-full clopen subset $Y\subset K_n^{(0)}=G^{(0)}$ 
such that $K_n|Y=Y$. 
Define $h\in C(G^{(0)},\Z)$ by 
\[
h(y)=\max\left\{\sum_{x\in K_n(y)}f_1(x),\sum_{x\in K_n(y)}f_2(x)\right\}
\]
for $y\in Y$ and $h(z)=0$ for $z\notin Y$. 
It is not so hard to see 
\[
\int f_i\,d\mu\leq\int h\,d\mu\leq\int g_j\,d\mu
\]
for every $\mu\in M(K_n)$, 
which implies $[f_i]\leq[h]\leq[g_j]$ in $H_0(K_n)$ 
by Lemma \ref{cptHopf} (4). 
Hence we obtain the same inequalities in $H_0(G)$. 
\end{proof}

%%%%%%%%%%%%%%%%%%%%%%%%%%%%%%%%%%%%%%%%
\subsection{Equivalence between clopen subsets}

By using the lemmas above, 
we can prove the following two theorems 
concerning the `equivalence' of clopen subsets 
under the action of the (topological) full groups. 

\begin{thm}\label{Hopf1}
Suppose that $G$ is almost finite and minimal. 
For two clopen subsets $U,V\subset G^{(0)}$, 
the following are equivalent. 
\begin{enumerate}
\item There exists $\gamma\in[G]$ such that $\gamma(U)=V$. 
\item $\mu(U)$ equals $\mu(V)$ for all $\mu\in M(G)$. 
\end{enumerate}
Moreover, $\gamma$ in the first condition can be chosen so that 
$\gamma^2=\id$ and $\gamma(x)=x$ for $x\in X\setminus(U\cup V)$. 
\end{thm}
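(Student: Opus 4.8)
The plan is to prove the two implications separately, with the substantive content lying in (2)$\Rightarrow$(1). The implication (1)$\Rightarrow$(2) is essentially the definition of $G$-invariance: if $\gamma=\tau_W$ for some $W$ in the full group sense, then one writes $U$ as a finite disjoint union of clopen pieces on each of which $\gamma$ is implemented by a compact open $G$-set, and applies $\mu(r(\cdot))=\mu(s(\cdot))$ piece by piece. Actually, since elements of $[G]$ need not be in $[[G]]$, I would instead argue measure-theoretically: for $\gamma\in[G]$ and $\mu\in M(G)$, one has $\gamma_*\mu=\mu$ because $\mu$ is $G$-invariant (this is standard; $\gamma$ moves points only along $G$-orbits and locally agrees with $\tau$-maps of $G$-sets up to a null set, or one can invoke the essential principality guaranteed by the remark after Lemma \ref{Mneqempty} to reduce to the principal case). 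Hence $\mu(V)=\mu(\gamma(U))=\mu(U)$.

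For (2)$\Rightarrow$(1), the idea is a back-and-forth (Cantor--Bernstein) argument built from Lemma \ref{Blackadar}. After discarding $U\cap V$ we may assume $U$ and $V$ are disjoint. From $\mu(U)=\mu(V)$ for all $\mu\in M(G)$ together with minimality and Lemma \ref{Gfull}/\ref{positive}, I would like to produce an element of $[[G]]$ carrying $U$ onto $V$. The issue is that Lemma \ref{Blackadar} requires a \emph{strict} inequality $\mu(U)<\mu(V)$, so it cannot be applied directly when the measures agree. The standard fix is: pick a small nonempty clopen $W\subset V$ (nonempty, hence $\mu(W)>0$ for all $\mu$ by minimality); then $\mu(U)=\mu(V)>\mu(V\setminus W)$ and $\mu(V\setminus W)<\mu(U\cup W')$-type strict inequalities hold, and one runs a two-step Schr\"oder--Bernstein: use Lemma \ref{Blackadar} to get $\gamma_1\in[[G]]$ with $\gamma_1(V\setminus W)\subset U$ (a strict inequality, since $\mu(V\setminus W)<\mu(U)$), and symmetrically $\gamma_2\in[[G]]$ with $\gamma_2(U)\subset V$. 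Then the usual Cantor--Bernstein bookkeeping with the partial involutions $\gamma_1,\gamma_2$ produces a bijection between $U$ and $V$ lying in $[G]$; since each $\gamma_i$ fixes points outside the relevant clopen sets and squares to the identity, the resulting $\gamma$ can be arranged to satisfy $\gamma^2=\id$ and $\gamma(x)=x$ off $U\cup V$. I expect the cleanest route is actually to avoid the infinite back-and-forth: since $H_0(G)$ has the Riesz interpolation property and is weakly unperforated (Proposition \ref{Riesz}), $\mu(U)=\mu(V)$ for all $\mu$ forces $[1_U]=[1_V]$ in $H_0(G)$; but this is precisely Theorem \ref{Hopf2}, which is stated later, so within Section 6 I should give the self-contained Cantor--Bernstein proof rather than invoke it.

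The main obstacle is producing the involution on the nose. Concretely: the two partial maps $\gamma_1\colon V\setminus W\to U$ and $\gamma_2\colon U\to V$ decompose $U$ and $V$ into orbits of the partial dynamical system they generate, and on each orbit one must choose the matching consistently so that the global map is an involution on $U\cup V$ fixing everything else. The finite orbits cause no trouble; the (possibly) infinite orbits are where one must be careful, alternating between $\gamma_1$ and $\gamma_2^{-1}$. The key point making this work inside $[[G]]$ rather than merely $[G]$ is that everything is built from \emph{compact open} $G$-sets: the decomposition of $U\cup V$ into these orbit-pieces is by clopen sets (the partial maps are $\tau$-maps of compact open $G$-sets, so preimages of clopen sets under iterates are clopen), and a locally finite such decomposition lets one realize $\gamma$ as a single compact open $G$-set, hence $\gamma\in[[G]]\subset[G]$. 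I would carry out the orbit analysis explicitly but briefly, noting that the resulting $\gamma$ is a product of transpositions supported on disjoint clopen sets, which gives both $\gamma^2=\id$ and the support condition for free.
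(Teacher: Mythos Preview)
Your overall strategy---reduce to disjoint $U,V$, invoke Lemma~\ref{Blackadar} for strict-inequality embeddings, and assemble a bijection by a back-and-forth---is exactly the route the paper takes (it simply cites Glasner--Weiss, Proposition~2.6, with Lemma~\ref{Blackadar} replacing their Lemma~2.5). But your implementation has two concrete problems.

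First, the Schr\"oder--Bernstein setup is mis-specified. You remove a small clopen $W\subset V$ so that $\mu(V\setminus W)<\mu(U)$ and obtain $\gamma_1$ with $\gamma_1(V\setminus W)\subset U$; fine. But then you write ``symmetrically $\gamma_2\in[[G]]$ with $\gamma_2(U)\subset V$'': Lemma~\ref{Blackadar} requires $\mu(U)<\mu(V)$, which is false since $\mu(U)=\mu(V)$. There is no second total injection in the other direction, so Cantor--Bernstein as you describe it does not get off the ground. The Glasner--Weiss argument is not a two-step Schr\"oder--Bernstein but an \emph{iterated exhaustion}: after $\gamma_1$, the leftovers $U\setminus\gamma_1(V\setminus W)$ and $W$ again have equal measure for every $\mu$, and one repeats with these smaller sets. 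For this iteration to terminate (in the sense that the intersection of the remainders is empty), one needs that the clopen pieces can be chosen with arbitrarily small measure, i.e.\ that every $\mu\in M(G)$ is atomless. The paper records this explicitly (minimality forces $G^{(0)}$ to be finite or a Cantor set; in the latter case orbits are infinite, so invariant measures have no atoms), and you should too.

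Second, your claim that the resulting $\gamma$ lies in $[[G]]$ is an overreach. The exhaustion produces a countable clopen partition of $U\cup V$ on each piece of which $\gamma$ agrees with some $\tau_C$, but there is no reason the partition is finite or that these $C$'s assemble into a single compact open $G$-set. That is precisely why Theorem~\ref{Hopf1} is stated for $[G]$, while Theorem~\ref{Hopf2} (which does land in $[[G]]$) needs the stronger hypothesis $[1_U]=[1_V]$ in $H_0(G)$ and a different argument. Drop the $[[G]]$ claim; what the construction yields is $\gamma\in[G]$ with $\gamma^2=\id$ and support in $U\cup V$, which is all that is asserted.
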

\begin{proof}
Since $G$ is minimal, 
$G^{(0)}$ is either a finite set or a Cantor set. 
If $G^{(0)}$ is a finite set, then the assertion is trivial, 
and so we may assume that $G^{(0)}$ is a Cantor set. 
Any $\mu\in M(G)$ has no atoms, 
because every $G$-orbit is an infinite set. 
By Lemma \ref{Gfull}, 
$\mu(U)$ is positive for any non-empty clopen set $U$ and $\mu\in M(G)$. 
Then the assertion follows from 
a simple generalization of the arguments in \cite[Proposition 2.6]{GW} 
by using Lemma \ref{Blackadar} instead of \cite[Lemma 2.5]{GW}. 
See also \cite[Theorem 3.20]{LaO} and its proof. 
\end{proof}

\begin{thm}\label{Hopf2}
Suppose that $G$ is almost finite. 
For two $G$-full clopen subsets $U,V\subset G^{(0)}$, 
the following are equivalent. 
\begin{enumerate}
\item There exists $\gamma\in[[G]]$ such that $\gamma(U)=V$. 
\item $[1_U]$ equals $[1_V]$ in $H_0(G)$. 
\end{enumerate}
Moreover, $\gamma$ in the first condition can be chosen so that 
$\gamma^2=\id$ and $\gamma(x)=x$ for $x\in X\setminus(U\cup V)$. 
\end{thm}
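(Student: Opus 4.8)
The plan is to prove the two implications separately; $(1)\Rightarrow(2)$ is immediate, and $(2)\Rightarrow(1)$ carries the weight. For $(1)\Rightarrow(2)$: if $\gamma=\tau_W\in[[G]]$ with $W$ a compact open $G$-set and $\gamma(U)=V$, put $E=W\cap s^{-1}(U)$. Then $E$ is a compact open $G$-set with $s(E)=U$ and $r(E)=\tau_W(U)=V$, so $\delta_1(1_E)=s_*(1_E)-r_*(1_E)=1_U-1_V$ and hence $[1_U]=[1_V]$ in $H_0(G)$.

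For $(2)\Rightarrow(1)$, I would first reduce to a bisection problem. It suffices to produce a compact open $G$-set $E$ with $s(E)=U\setminus V$ and $r(E)=V\setminus U$: then $D=E\cup E^{-1}\cup(U\cap V)\cup(G^{(0)}\setminus(U\cup V))$ (the last two summands regarded as sets of units) is a compact open $G$-set, the four source sets being pairwise disjoint, and $\gamma=\tau_D$ satisfies $\gamma^2=\id$, $\gamma(U)=V$ and $\gamma(x)=x$ for $x\notin U\cup V$. Thus, after replacing $U,V$ by the disjoint clopen sets $U\setminus V$ and $V\setminus U$ — whose classes in $H_0(G)$ still agree, while $(U\setminus V)\cup(U\cap V)$ and $(V\setminus U)\cup(U\cap V)$ remain $G$-full — the task becomes: given two disjoint clopen sets with equal class in $H_0(G)$ whose unions with a common clopen set are $G$-full, join them by a compact open $G$-set.

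To do this I would write $1_U-1_V=\delta_1\xi$ with $\xi=\sum_{m=1}^M\varepsilon_m1_{W_m}$, $\varepsilon_m=\pm1$, $W_m$ compact open $G$-sets, and set $C=\bigcup_m(W_m\cup W_m^{-1})\cup G^{(0)}$, a compact subset of $G$. For small $\varepsilon>0$, almost finiteness yields an elementary subgroupoid $K$ with $\lvert CKx\setminus Kx\rvert<\varepsilon\lvert K(x)\rvert$ for all $x$. Using the tower picture of $K$ from Lemma \ref{tower1}, the functions $x\mapsto\lvert U\cap K(x)\rvert$ and $x\mapsto\lvert V\cap K(x)\rvert$ are continuous, and a direct computation from $\delta_1\xi=1_U-1_V$ — in which every $W_m$-arrow joining two distinct $K$-orbits is charged against an element of $CKx\setminus Kx$ — gives $\bigl\lvert\,\lvert U\cap K(x)\rvert-\lvert V\cap K(x)\rvert\,\bigr\rvert<M\varepsilon\lvert K(x)\rvert$ for all $x$. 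Cutting each $K$-orbit down in a locally constant way via the tower, one gets clopen $U_0\subseteq U$, $V_0\subseteq V$ with $\lvert U_0\cap K(x)\rvert=\lvert V_0\cap K(x)\rvert=\min\{\lvert U\cap K(x)\rvert,\lvert V\cap K(x)\rvert\}$, and Lemma \ref{cptHopf}(3) applied inside the compact principal groupoid $K$ provides a compact open $K$-set $C_0$ with $s(C_0)=U_0$, $r(C_0)=V_0$. The leftovers $U_1=U\setminus U_0$, $V_1=V\setminus V_0$ are disjoint clopen with $[1_{U_1}]=[1_{V_1}]$ (since $[1_{U_0}]=[1_{V_0}]$ by $C_0$), and, as $M(G)\subseteq M(K)$ and their intersections with each $K$-orbit are $<M\varepsilon\lvert K(x)\rvert$, Lemma \ref{cptHopf}(1) forces $\mu(U_1)=\mu(V_1)<M\varepsilon$ for every $\mu\in M(G)$.

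This reduces the problem to the same one for clopen sets of arbitrarily small invariant measure; this is the point at which $G$-fullness is used, via Lemma \ref{Gfull}, which supplies $c>0$ with $\mu(U),\mu(V)>c$ for all $\mu\in M(G)$, so that $U_0,V_0$ keep measure $>c-M\varepsilon$ and provide the room to absorb the small leftovers by moves of the type furnished by Lemma \ref{Blackadar}. Iterating with a rapidly decreasing sequence of moduli and splicing the resulting compact open bisections together, while routing the residual cross-orbit discrepancy through the finitely many arrows $W_m$, would assemble the bisection $E$ — equivalently, would realize $\gamma$ as a finite product of involutions in $[[G]]$ with pairwise disjoint active supports. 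The hard part, and the technical heart of the argument, is to organize this assembly so that it terminates after finitely many stages, i.e.\ so that the assembled $E$ is genuinely compact; this forces one to use in a coordinated way the uniform F\o lner estimate from almost finiteness, the explicit finite witness $\xi$, and the $G$-fullness hypothesis, and I expect that bookkeeping to be the most delicate step.
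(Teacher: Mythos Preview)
Your $(1)\Rightarrow(2)$ and the reduction to disjoint $U,V$ are fine and match the paper. The gap is in your plan for $(2)\Rightarrow(1)$: the scheme ``match most of $U$ to $V$ inside $K$, then iterate on the small leftover'' does not terminate. After one step you have clopen $U_1,V_1$ with $[1_{U_1}]=[1_{V_1}]$ and $\mu(U_1)=\mu(V_1)<M\varepsilon$, but these need not be $G$-full, so Lemma~\ref{Gfull} is unavailable; Lemma~\ref{Blackadar} requires strict inequality of measures and only yields a containment, never an exact bisection. Each pass leaves a nonempty residue, and an infinite splice of bisections is not compact open. Your phrase ``routing the residual cross-orbit discrepancy through the finitely many arrows $W_m$'' points in the right direction but is not yet an argument.

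The paper's proof is a single step with no iteration. Write $1_U-1_V=\delta_1(f)$ with $f=\sum_{i=1}^n 1_{C_i}$ (replacing $C$ by $C^{-1}$ absorbs negative coefficients). Using $G$-fullness and Lemma~\ref{Gfull}, fix $\varepsilon>0$ with $\mu(U),\mu(V)>\varepsilon$ for all $\mu\in M(G)$, and choose $K$ so that $\lvert C_i^{\pm1}Kx\setminus Kx\rvert<(\varepsilon/n)\lvert K(x)\rvert$ and also $\lvert U\cap K(x)\rvert,\lvert V\cap K(x)\rvert>\varepsilon\lvert K(x)\rvert$ for every $x$. Then $\sum_i\lvert s(C_i\setminus K)\cap K(x)\rvert<\lvert U\cap K(x)\rvert$ and similarly for $r,V$, so Lemma~\ref{cptHopf}(2) produces compact open $K$-sets $A_i,B_i$ with $r(A_i)=s(C_i\setminus K)$, $s(A_i)\subset U$ pairwise disjoint, and $s(B_i)=r(C_i\setminus K)$, $r(B_i)\subset V$ pairwise disjoint. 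Now $D=\bigcup_i B_i(C_i\setminus K)A_i$ is a compact open $G$-set with $s(D)\subset U$, $r(D)\subset V$, and a direct computation from $\delta_1(f)=1_U-1_V$ (the $K$-part of $f$ contributes zero when summed over a $K$-orbit) gives the \emph{exact} identity $\sum_{y\in K(x)}1_{U\setminus s(D)}(y)=\sum_{y\in K(x)}1_{V\setminus r(D)}(y)$. Lemma~\ref{cptHopf}(3) then furnishes a $K$-set $E$ with $s(E)=U\setminus s(D)$, $r(E)=V\setminus r(D)$, and $F=D\cup E$ is the desired bisection. The idea you missed is that the cross-orbit arrows $C_i\setminus K$ are not error terms to be estimated away and revisited later: they are used \emph{once}, as actual pieces of the bisection, after being conjugated by $K$-sets so that their sources lie in $U$ and their ranges in $V$.
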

\begin{proof}
Recall that 
$H_0(G)$ is the quotient of $C(G^{(0)},\Z)$ by $\Ima\delta_1$, 
where $\delta_1:C_c(G,\Z)\to C(G^{(0)},\Z)$ is given by 
\[
\delta_1(f)(x)=s_*(f)(x)-r_*(f)(x)
=\sum_{g\in s^{-1}(x)}f(g)-\sum_{g\in r^{-1}(x)}f(g). 
\]
Suppose that there exists $\gamma\in[[G]]$ such that $\gamma(U)=V$. 
Let $O$ be a compact open $G$-set such that $\gamma=\tau_O$. 
It is easy to see $\delta_1(1_{O\cap r^{-1}(V)})=1_U-1_V$, 
which implies $[1_U]=[1_V]$ in $H_0(G)$. 

We would like to show the other implication (2)$\Rightarrow$(1). 
Clearly we may assume that $U$ and $V$ are disjoint. 
Suppose that there exists $f\in C_c(G,\Z)$ such that $\delta_1(f)=1_U-1_V$. 
For a compact open $G$-set $C$, one has $\delta_1(1_{C^{-1}})=-1_C$. 
Since $G$ has a base of compact open $G$-sets, 
we may assume that 
there exist compact open $G$-sets $C_1,C_2,\dots,C_n$ such that 
$f=1_{C_1}+1_{C_2}+\dots+1_{C_n}$. 
By Lemma \ref{Gfull}, there exists $\ep>0$ such that 
$\mu(U)>\ep$ and $\mu(V)>\ep$ for all $\mu\in M(G)$. 
Almost finiteness of $G$ yields 
an elementary subgroupoid $K\subset G$ such that 
\[
\frac{\lvert C_iKx\setminus Kx\rvert}{\lvert K(x)\rvert}
<\frac{\ep}{n}\quad\text{and}\quad
\frac{\lvert C_i^{-1}Kx\setminus Kx\rvert}{\lvert K(x)\rvert}
<\frac{\ep}{n}
\]
for all $x\in G^{(0)}$ and $i=1,2,\dots,n$. 
Moreover, by the proof of Lemma \ref{Mneqempty}, 
we may further assume 
\[
\frac{\lvert U\cap K(x)\rvert}{\lvert K(x)\rvert}>\ep\quad\text{and}\quad
\frac{\lvert V\cap K(x)\rvert}{\lvert K(x)\rvert}>\ep
\]
for all $x\in G^{(0)}$. 
It follows from $\lvert C_iKx\setminus Kx\rvert
=\lvert s(C_i\setminus K)\cap K(x)\rvert$ that 
\[
\lvert U\cap K(x)\rvert>\sum_{i=1}^n\lvert s(C_i\setminus K)\cap K(x)\rvert
\]
for all $x\in G^{(0)}$. 
Likewise we have 
\[
\lvert V\cap K(x)\rvert>\sum_{i=1}^n\lvert r(C_i\setminus K)\cap K(x)\rvert
\]
for all $x\in G^{(0)}$. 
By Lemma \ref{cptHopf} (2), 
there exist compact open $K$-sets $A_1,A_2,\dots,A_n$ such that 
\[
r(A_i)=s(C_i\setminus K),\quad s(A_i)\subset U\quad\forall i=1,2,\dots,n
\]
and $s(A_i)$'s are mutually disjoint. 
Similarly there exists compact open $K$-sets $B_1,B_2,\dots,B_n$ such that 
\[
s(B_i)=r(C_i\setminus K),\quad r(B_i)\subset V\quad\forall i=1,2,\dots,n
\]
and $r(B_i)$'s are mutually disjoint. 
Then 
\[
D=\bigcup_{i=1}^nB_i(C_i\setminus K)A_i
\]
is a compact open $G$-set such that $r(D)\subset V$ and $s(D)\subset U$. 
Moreover, for any $x\in G^{(0)}$, 
\begin{align*}
\sum_{y\in K(x)}1_{U\setminus s(D)}(y)
&=\sum_{y\in K(x)}1_U(y)-1_{s(D)}(y) \\
&=\sum_{y\in K(x)}\left(1_U(y)-\sum_{i=1}^n1_{s(C_i\setminus K)}(y)\right) \\
&=\sum_{y\in K(x)}
\left(1_U(y)-\sum_{i=1}^ns_*(1_{C_i\setminus K})(y)\right) \\
&=\sum_{y\in K(x)}
\left(1_V(y)-\sum_{i=1}^nr_*(1_{C_i\setminus K})(y)\right) \\
&=\sum_{y\in K(x)}\left(1_V(y)-\sum_{i=1}^n1_{r(C_i\setminus K)}(y)\right)
=\sum_{y\in K(x)}1_{V\setminus r(D)}(y), 
\end{align*}
and so one can find a compact open $K$-set $E$ such that 
$s(E)=U\setminus s(D)$ and $r(E)=V\setminus r(D)$ by Lemma \ref{cptHopf} (3). 
Hence $F=D\cup E$ is a compact open $G$-set 
satisfying $s(F)=U$ and $r(F)=V$. 
Define a compact open $G$-set $O$ 
by $O=F\cup F^{-1}\cup(G^{(0)}\setminus(U\cup V))$. 
Then $\gamma=\tau_O\in[[G]]$ is a desired element. 
\end{proof}

\begin{rem}
In the light of Proposition \ref{exact}, 
the two conditions of the theorem above are also equivalent to 
\begin{enumerate}
\item[(3)] There exists $w\in N(C(G^{(0)}),C^*_r(G))$ such that 
$w1_Uw^*=1_V$. 
\end{enumerate}
We do not know when this is equivalent to the condition that 
the two projections $1_U$ and $1_V$ have the same class in $K_0(C^*_r(G))$. 
\end{rem}

We also remark that 
a special case of Theorem \ref{Hopf2} is implicitly contained 
in the proof of \cite[Theorem 3.16]{LaO}.

%%%%%%%%%%%%%%%%%%%%%%%%%%%%%%%%%%%%%%%%%%%%%%%%%%%%%%%%%%%%
\section{The index map}

In this section, 
we introduce a group homomorphism, called the index map, 
from $[[G]]$ to $H_1(G)$. 
When $G$ is almost finite, it will be shown that 
the index map is surjective (Theorem \ref{surjective}) and 
that any element in the kernel of the index map can be written 
as a product of four elements of finite order (Theorem \ref{injective}). 

Throughout this section, 
we let $G$ be a second countable \'etale essentially principal groupoid 
whose unit space is compact and totally disconnected. 
For $f\in C_c(G,\Z)$, 
we denote its equivalence class in $H_1(G)$ by $[f]$. 

\begin{df}
For $\gamma\in[[G]]$, 
a compact open $G$-set $U$ satisfying $\gamma=\tau_U$ uniquely exists, 
because $G$ is essentially principal. 
It is easy to see that $1_U$ is in $\ker\delta_1$. 
We define a map $I:[[G]]\to H_1(G)$ by $I(\gamma)=[1_U]$ 
and call it the index map. 
\end{df}

\begin{rem}
When $G$ arises from a minimal homeomorphism on a Cantor set, 
$H_1(G)$ is $\Z$ and 
the above definition agrees with that in \cite[Section 5]{GPS2}. 
In this case, the index map can be understood 
through the Fredholm index of certain Fredholm operators. 
\end{rem}

\begin{lem}\label{H1}
\begin{enumerate}
\item If $U,U'\subset G$ are compact open $G$-sets satisfying $s(U)=r(U')$, 
then 
\[
O=\{(g,g')\in G^{(2)}\mid g\in U,\ g'\in U'\}
\]
is a compact open subset of $G^{(2)}$ and 
$\delta_2(1_O)=1_U-1_{UU'}+1_{U'}$. 
\item The index map $I:[[G]]\to H_1(G)$ is a homomorphism. 
\item $[1_U]=0$ for any clopen subset $U\subset G^{(0)}$. 
\item $[1_U+1_{U^{-1}}]=0$ for any compact open $G$-set $U\subset G$. 
\end{enumerate}
\end{lem}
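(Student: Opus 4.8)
The plan is to verify each of the four statements directly from the definition of $\delta_2$, $\delta_1$, and the pushforward maps, since all of them are essentially bookkeeping in the chain complex.

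For (1), I would first observe that the map $G^{(2)}\to G\times G$ is a homeomorphism onto its image, so $O$ — being the preimage of the open set $U\times U'$ intersected with $G^{(2)}$ — is open, and it is compact because $U,U'$ are compact and the condition $s(U)=r(U')$ guarantees the obvious bijection $O\cong s(U)=r(U')$ via $(g,g')\mapsto s(g)$, which shows $O$ is also closed. To compute $\delta_2(1_O)=d_{0*}(1_O)-d_{1*}(1_O)+d_{2*}(1_O)$, I would note that each $d_i$ restricted to $O$ is injective: $d_0(g,g')=g'$ ranges over $U'$, $d_1(g,g')=gg'$ ranges over $UU'$, and $d_2(g,g')=g$ ranges over $U$, each bijectively since $U,U'$ are $G$-sets. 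Hence $d_{0*}(1_O)=1_{U'}$, $d_{1*}(1_O)=1_{UU'}$, $d_{2*}(1_O)=1_U$, giving the claimed formula.

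For (2), given $\gamma=\tau_U$ and $\gamma'=\tau_{U'}$ in $[[G]]$, the composite $\gamma\gamma'=\tau_{UU'}$ corresponds to the compact open $G$-set $UU'$ (here I use $s(U)=r(U')=G^{(0)}$ since these are elements of $[[G]]$, so $O$ as in (1) is defined). Then part (1) gives $\delta_2(1_O)=1_U-1_{UU'}+1_{U'}$, so $[1_{UU'}]=[1_U]+[1_{U'}]$ in $H_1(G)$, which is precisely $I(\gamma\gamma')=I(\gamma)+I(\gamma')$; uniqueness of the $G$-set implementing each element (essential principality) makes $I$ well-defined. For (3), a clopen $U\subset G^{(0)}$ satisfies $\tau_U=\id_U$ as a partial homeomorphism, and applying (1) with $U'=U$ (noting $UU=U$ inside $G^{(0)}$, and $s(U)=r(U)=U$) yields $\delta_2(1_O)=1_U-1_U+1_U=1_U$, so $[1_U]=0$. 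For (4), given a compact open $G$-set $U$, apply (1) with the pair $(U,U^{-1})$, which is composable since $s(U)=r(U^{-1})$; then $UU^{-1}=r(U)$ is a clopen subset of $G^{(0)}$, so $\delta_2(1_O)=1_U-1_{r(U)}+1_{U^{-1}}$, whence $[1_U+1_{U^{-1}}]=[1_{r(U)}]=0$ by part (3).

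The only mild subtlety — and the one place I would be careful — is the claim in (1) that $O$ is compact (equivalently closed) in $G^{(2)}$; this uses that $G$ is Hausdorff and that $s|U$, $r|U'$ are homeomorphisms onto their (compact, hence closed) images, combined with the matching condition $s(U)=r(U')$. Everything else is a direct unwinding of the pushforward formula $\pi_*(f)(y)=\sum_{\pi(x)=y}f(x)$ specialized to the injective situation, where it reduces to transport of support along a bijection. I do not anticipate any genuine obstacle; the lemma is foundational plumbing for the index map and its proof is purely formal once the face maps $d_i$ are written out on $O$.
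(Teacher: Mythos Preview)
Your proposal is correct and matches the paper's approach exactly: the paper states only that (1) is a ``straightforward computation'' and that (2), (3), (4) are ``direct consequences of (1)'', and you have supplied precisely those computations. The one place where you could simplify is the compactness of $O$ in (1): since $G^{(2)}$ is closed in $G\times G$ (Hausdorffness of $G$) and $U\times U'$ is compact, $O=(U\times U')\cap G^{(2)}$ is immediately compact, so the bijection with $s(U)$ is not needed for that point.
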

\begin{proof}
(1) follows from a straightforward computation. 
(2), (3) and (4) are direct consequences of (1). 
\end{proof}

%%%%%%%%%%%%%%%%%%%%%%%%%%%%%%%%%%%%%%%%
\subsection{Surjectivity of the index map}

In this subsection we prove that 
the index map $I:[[G]]\to H_1(G)$ is surjective when $G$ is almost finite 
(Theorem \ref{surjective}). 
To this end, we need the following lemma. 

\begin{lem}
Let $K$ be an elementary subgroupoid of $G$ and 
let $Y\subset G^{(0)}$ be a $K$-full clopen subset such that $K|Y=Y$. 
Suppose that $f\in C_c(G,\Z)$ is in $\ker\delta_1$. 
Then, the function $\tilde f\in C_c(G,\Z)$ defined by 
\[
\tilde f(g)
=\begin{cases}\sum_{g_1,g_2\in K}f(g_1gg_2) & g\in G|Y \\
0 & \text{otherwise}\end{cases}
\]
is also in $\ker\delta_1$ and $[f]=[\tilde f]$ in $H_1(G)$. 
\end{lem}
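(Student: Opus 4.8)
The plan is to reduce the claim to a telescoping computation of boundaries $\delta_2$, using the hypothesis that $K$ is a compact, open, principal subgroupoid with $K^{(0)}=G^{(0)}$, so that for each $x\in G^{(0)}$ the fibre $Kx$ is finite and of locally constant cardinality. First I would write out what $\tilde f$ does: composing $f$ on the left and right by arbitrary elements of $K$, then restricting to the ``block'' $G|Y$. The key structural fact I would use is that, since $Y$ is $K$-full and $K|Y=Y$, each $K$-orbit meets $Y$ in exactly one point; hence $K$ furnishes, for every $x\in G^{(0)}$, a unique $k_x\in K$ with $r(k_x)=x$ and $s(k_x)\in Y$. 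Equivalently there is a compact open $G$-set $W=\{k_x\mid x\in G^{(0)}\}\subset K$ with $r(W)=G^{(0)}$ and $s(W)=Y$; cover $W$ by finitely many compact open $G$-sets and work with those. The homeomorphism $\tau_W$ (a ``retraction'' of $G^{(0)}$ onto $Y$) is the device that turns a general $g\in G$ into $k_{r(g)}^{-1}\,g\,k_{s(g)}\in G|Y$.

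Next I would show that the map $g\mapsto k_{r(g)}^{-1}g k_{s(g)}$, call it $\beta$, when pushed forward on chains, sends $f$ to something cohomologous to $f$. Concretely, since $W$ is built from finitely many compact open $G$-sets $W_1,\dots,W_m$, the function $\sum_j 1_{W_j}$ is a $1$-chain whose boundary is $1_{G^{(0)}}-1_Y$ (up to sign), and more to the point, for the $G$-set $W$ the formula in Lemma \ref{H1}(1) gives $\delta_2(1_{O_{W,U}})=1_W-1_{WU}+1_U$ whenever $s(W)=r(U)$, and similarly on the other side. Applying this with $U$ ranging over the (finitely many) compact open $G$-sets whose characteristic functions sum to $f$, and also using $\delta_2(1_{O_{U,W^{-1}}})=1_U-1_{UW^{-1}}+1_{W^{-1}}$ together with Lemma \ref{H1}(4) to discard the $1_{W^{\pm1}}$ terms, one gets that $[f]=[\beta_*f]$ in $H_1(G)$, where $\beta_*f$ is supported in $G|Y$. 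So far this only handles the ``conjugation by $W$'' part, i.e. picking the single representative $k_x$; the remaining point is to pass from $\beta_*f$ to $\tilde f$, which averages over \emph{all} $g_1,g_2\in K$ rather than using the distinguished $k_x$.

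For that last step I would observe that $\tilde f(g)=\sum_{g_1,g_2\in K}f(g_1 g g_2)$, restricted to $g\in G|Y$, can be rewritten: writing any $g_1\in K$ with $s(g_1)=r(g)$ uniquely as $g_1=k\,k_{r(g)}^{-1}$ wait --- more cleanly, decompose $K$ fibrewise and note that the extra summation over $g_1,g_2$ beyond the distinguished pair $(k_{r(g)}^{-1},k_{s(g)})$ contributes a further correction that is itself a sum of terms of the form $1_U-1_{UV}+1_V$ with $U,V$ compact open $G$-sets (using that $K$, being elementary, is a finite union of compact open $G$-sets), hence a boundary by Lemma \ref{H1}(1) and (4). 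Thus $[\tilde f]=[\beta_*f]=[f]$. That $\tilde f\in\ker\delta_1$ is then automatic: either because $[\tilde f]$ is well-defined in $H_1(G)$ only if $\tilde f$ is a cycle (so one checks $\delta_1\tilde f=0$ directly), or more safely I would verify $\delta_1\tilde f=0$ by a short direct computation, using that $\delta_1 f=0$ and that left/right translation by $K$ commutes appropriately with $s_*,r_*$ on the block $G|Y$ --- concretely, $s_*(\tilde f)=s_*(f)|_Y$-type identities after summing over the finite $K$-fibres.

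The main obstacle I anticipate is the bookkeeping in the second and third paragraphs: organizing the double sum over $g_1,g_2\in K$ into finitely many ``triangle'' boundaries $\delta_2(1_O)=1_U-1_{UV}+1_V$ without losing track of which compositions are defined, and making sure the finitely many compact open $G$-sets chosen to decompose $K$, $W$, and the support of $f$ are mutually compatible (i.e. their products are again compact open $G$-sets with matching source/range). None of the individual steps is deep, but the indexing is where an error would most likely creep in; I would handle it by first proving the single-representative statement $[f]=[\beta_*f]$ cleanly, and only then treating the $K$-averaging as a further, formally identical, application of the same triangle identity.
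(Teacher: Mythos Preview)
Your retraction idea $\beta(g)=k_{r(g)}^{-1}\,g\,k_{s(g)}$ is exactly right, but you are missing the main simplification: $\beta$ is already an \'etale groupoid homomorphism $G\to G|Y$, and its pushforward on $1$-chains satisfies $\beta_*f=\tilde f$ \emph{on the nose}. Indeed, for $g'\in G|Y$ the fibre $\beta^{-1}(g')$ is $\{g_1g'g_2:g_1,g_2\in K,\ s(g_1)=r(g'),\ r(g_2)=s(g')\}$, which is precisely the index set in the definition of $\tilde f$. So your ``second step'' (a further $K$-averaging to pass from $\beta_*f$ to $\tilde f$) is vacuous. Once you see this, the cleanest route is to observe that $\iota\circ\beta$ (with $\iota:G|Y\hookrightarrow G$) is similar to $\id_G$ via $\theta(x)=k_x$, and invoke Proposition~\ref{homosim}; this yields $[f]=[(\iota\circ\beta)_*f]=[\tilde f]$ and $\delta_1\tilde f=0$ at once. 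Your aside that $\delta_1(1_W)=1_{G^{(0)}}-1_Y$ is incorrect---$s_*(1_W)(y)=|K(y)|$ for $y\in Y$---and more importantly $W$ is \emph{not} a $G$-set (only $r|W$ is injective; $s|W$ is $|K(\cdot)|$-to-one onto $Y$), so applying Lemma~\ref{H1}(1) directly to $W$ as you do is illegitimate, and the honest bookkeeping over a decomposition $W=\bigcup_jW_j$ would be considerably messier than you suggest.

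The paper proceeds differently: it never decomposes anything into $G$-sets. Instead it writes down two explicit functions $h_1,h_2\in C_c(G^{(2)},\Z)$---roughly $h_1(g,g')=f(g')$ when $g\in K$ with $r(g)\in Y$, and $h_2$ the companion on the right---and checks by direct computation of $d_{i*}$ that $\delta_2(h_1+h_2)=f-\tilde f+(k+\bar k)$, where $k\in C(K,\Z)$ is built from $f_0=s_*(f)=r_*(f)$ and $\bar k(g)=k(g^{-1})$, so that $[k+\bar k]=0$ by Lemma~\ref{H1}(4). This is in effect the chain homotopy from the proof of Proposition~\ref{homosim}, specialised to degree~$1$ and written out concretely; it sidesteps the $G$-set bookkeeping entirely.
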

\begin{proof}
Put $f_0=s_*(f)=r_*(f)\in C(G^{(0)},\Z)$. 
Define $k,\bar{k}\in C(K,\Z)$ by 
\[
k(g)=\begin{cases}f_0(s(g)) & r(g)\in Y \\
0 & r(g)\notin Y\end{cases}
\]
and $\bar{k}(g)=k(g^{-1})$ for $g\in K$. 
By Lemma \ref{H1} (4), $[k+\bar{k}]=0$ in $H_1(K)$ 
and hence in $H_1(G)$. 
We define $h_1\in C_c(G^{(2)},\Z)$ by 
\[
h_1(g,g')=\begin{cases}f(g') & g\in K,\ r(g)\in Y\\
0 & \text{otherwise.}\end{cases}
\]
Letting $d_i:G^{(2)}\to G$ ($i=0,1,2$) be the maps introduced in Section 3, 
we have 
\begin{align*}
\delta_2(h_1)(g)
&=d_{0*}(h_1)(g)-d_{1*}(h_1)(g)+d_{2*}(h_1)(g) \\
&=\sum_{g_0\in G}h_1(g_0,g)-d_{1*}(h_1)(g)+\sum_{g_0\in G}h_1(g,g_0) \\
&=f(g)-d_{1*}(h_1)(g)+k(g). 
\end{align*}
Define $h_2\in C_c(G^{(2)},\Z)$ by 
\[
h_2(g,g')=\begin{cases}d_{1*}(h_1)(g) & r(g)\in Y,\ g'\in K,\ s(g')\in Y \\
0 & \text{otherwise.}\end{cases}
\]
Then 
\begin{align*}
d_{0*}(h_2)(g)
&=\sum_{g_0\in G}h_2(g_0,g) \\
&=\begin{cases}\sum_{r(g_0)\in Y}d_{1*}(h_1)(g_0) & g\in K,\ s(g)\in Y \\
0 & \text{otherwise}\end{cases} \\
&=\begin{cases}s_*(f)(r(g)) & g\in K,\ s(g)\in Y \\
0 & \text{otherwise}\end{cases} \\
&=\begin{cases}f_0(r(g)) & g\in K,\ s(g)\in Y \\
0 & \text{otherwise}\end{cases} \\
&=\bar{k}(g). 
\end{align*}
Moreover, it is easy to see 
$d_{1*}(h_2)=\tilde f$ and $d_{2*}(h_2)=d_{1*}(h_1)$. 
Hence 
\begin{align*}
\delta_2(h_1)+\delta_2(h_2)
&=(f-d_{1*}(h_1)+k)+(\bar{k}-\tilde f+d_{1*}(h_1)) \\
&=f+k+\bar{k}-\tilde f, 
\end{align*}
and so $\tilde f$ is in $\ker\delta_1$ and $[f]=[\tilde f]$ in $H_1(G)$. 
\end{proof}

\begin{thm}\label{surjective}
When $G$ is almost finite, the index map $I$ is surjective. 
\end{thm}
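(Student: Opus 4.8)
The plan is to prove surjectivity of $I$ by showing that every $1$-cycle is homologous to a finite sum of indicator functions of ``loop'' $G$-sets, each of which is visibly the index of an element of $[[G]]$.

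\textbf{Step 1: normalisation.} I would start from an arbitrary class $[f]\in H_1(G)$ with $f\in C_c(G,\Z)\cap\ker\delta_1$. Since $G$ has a countable base of compact open $G$-sets, $f$ is a finite $\Z$-combination of such indicators; using the relations $[1_C]+[1_{C'}]=[1_{CC'}]$ (when $s(C)=r(C')$) and $1_C+1_{C^{-1}}\in\Ima\delta_2$ coming from Lemma \ref{H1}(1),(4), one may replace $f$, modulo $\Ima\delta_2$, by a cycle of the form $f=1_{C_1}+\dots+1_{C_n}$ with the $C_i$ compact open $G$-sets. The condition $\delta_1 f=0$ then reads $\sum_i 1_{s(C_i)}=\sum_i 1_{r(C_i)}$ in $C(G^{(0)},\Z)$; so $f$ is a ``circulation''.

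\textbf{Step 2: loops are indices.} Call a compact open $G$-set $V$ a \emph{loop} if $s(V)=r(V)$. For such $V$ put $P=s(V)$ and $U=V\cup(G^{(0)}\setminus P)$; then $U$ is a compact open $G$-set with $s(U)=r(U)=G^{(0)}$, so $\tau_U\in[[G]]$, and by Lemma \ref{H1}(3) one has $I(\tau_U)=[1_U]=[1_V]+[1_{G^{(0)}\setminus P}]=[1_V]$. Since $I$ is a homomorphism (Lemma \ref{H1}(2)), $\Ima I$ is a subgroup of $H_1(G)$; hence it suffices to rewrite $[f]$, modulo $\Ima\delta_2$, as a finite sum of loop classes $[1_{V_l}]$.

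\textbf{Step 3: the decomposition, where almost finiteness enters.} On a finite principal groupoid a circulation always decomposes into loops by the Euler-cycle argument; the difficulty for a general $G$ is that the transports $\tau_{C_i}$ need not preserve any finite clopen partition of $G^{(0)}$, so the naive matching-and-concatenating procedure of Step 2 may fail to terminate. To get around this I would use almost finiteness to pick an elementary subgroupoid $K\subset G$ with $\lvert C_i^{\pm1}Kx\setminus Kx\rvert<(\ep/n)\lvert K(x)\rvert$ for all $x\in G^{(0)}$ and $i=1,\dots,n$, choose (Lemma \ref{cptKakutani}) a $K$-full clopen set $Y\subset G^{(0)}$ with $K|Y=Y$, and apply the straightening lemma above to replace $f$ by the collapsed cycle $\tilde f$, which is supported on $G|Y$ and satisfies $[f]=[\tilde f]$ in $H_1(G)$. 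Writing $\tilde f=1_{D_1}+\dots+1_{D_m}$ with $D_j$ compact open $G$-sets contained in $G|Y$ (again normalising coefficients as in Step 1), one is reduced to a combinatorial problem: reorganise the $D_j$, using Lemma \ref{H1}(1) to split and concatenate composable pieces, into a finite family of loop $G$-sets. Here the finite $K$-orbit structure, the F\o lner-type smallness estimates for $K$, and the Hopf-type filling statements of Lemma \ref{cptHopf} --- in the spirit of the proof of Theorem \ref{Hopf2} --- are what keep the bookkeeping finite and force the procedure to close up, yielding $[\tilde f]=\sum_l[1_{V_l}]$ with each $V_l$ a loop $G$-set. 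Combined with Step 2 this gives $[f]=[\tilde f]\in\Ima I$. The genuine obstacle is precisely this last reorganisation; Steps 1 and 2 are formal manipulation in the chain complex.
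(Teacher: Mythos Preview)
Your outline is essentially the paper's proof: the paper also normalises to $f=\sum_{i=1}^n 1_{C_i}$ with $f\geq0$, chooses $K$ with $\lvert C_i^{\pm1}Kx\setminus Kx\rvert<n^{-1}\lvert K(x)\rvert$, passes via the straightening lemma to $\tilde f$ supported on $G|Y$, strips off $f_0=\tilde f|_{G^{(0)}}$ (a step you omit, but it is needed so that the $D_j$ avoid $K$), and writes $\tilde f-f_0=\sum_{j=1}^m 1_{D_j}$ with $D_j\subset (G|Y)\setminus G^{(0)}$.

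The concrete mechanism for your Step~3, which the paper carries out and which you leave as a sketch, is not an Euler-cycle decomposition into several loops but a single disjointification via $K$. The F\o lner estimate gives $\sum_j\lvert s(D_j)\cap K(y)\rvert=s_*(\tilde f-f_0)(y)<\lvert K(y)\rvert$ for all $y\in Y$ (and similarly for ranges), so Lemma~\ref{cptHopf}(2) produces compact open $K$-sets $A_j,B_j$ with $r(A_j)=s(D_j)$, $s(B_j)=r(D_j)$, the $s(A_j)$ pairwise disjoint and the $r(B_j)$ pairwise disjoint; one further application of Lemma~\ref{cptHopf}(3) arranges $\bigcup_j s(A_j)=\bigcup_j r(B_j)$. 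Then $E=\bigcup_j B_jD_jA_j$ is already a single loop, and Lemma~\ref{H1}(1) together with $H_1(K)=0$ (Lemma~\ref{cptHomology}) shows that the $K$-multiplications are homologically invisible, so $[\tilde f-f_0]=[1_E]$. Thus your diagnosis of where the content lies is correct; the point you are missing is that because $K$-sets contribute nothing to $H_1$, you may freely pre- and post-multiply each $D_j$ by them to force one loop, rather than searching for a cycle decomposition.
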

\begin{proof}
Take $f\in C_c(G,\Z)$ such that $\delta_1(f)=0$. 
We will show that 
there exists $\gamma\in[[G]]$ satisfying $I(\gamma)=[f]$. 
By Lemma \ref{H1} (4), we may assume $f(g)\geq0$ for all $g\in G$. 
Since $G$ has a base of compact open $G$-sets, 
there exist compact open $G$-sets $C_1,C_2,\dots,C_n$ such that 
$f=1_{C_1}+1_{C_2}+\dots+1_{C_n}$. 
Almost finiteness of $G$ yields 
an elementary subgroupoid $K\subset G$ such that 
\[
\frac{\lvert C_iKx\setminus Kx\rvert}{\lvert K(x)\rvert}
<\frac{1}{n}\quad\text{and}\quad
\frac{\lvert C_i^{-1}Kx\setminus Kx\rvert}{\lvert K(x)\rvert}
<\frac{1}{n}
\]
for all $x\in G^{(0)}$ and $i=1,2,\dots,n$. 
For $x\in G^{(0)}$, let $E_x=\{g\in G\mid g\notin K,\ s(g)\in K(x)\}$. 
Then
\[
\sum_{g\in E_x}f(g)=\sum_{i=1}^n\sum_{g\in E_x}1_{C_i}(g)
=\sum_{i=1}^n\lvert C_iKx\setminus Kx\rvert
<\sum_{i=1}^nn^{-1}\lvert K(x)\rvert=\lvert K(x)\rvert
\]
for any $x\in G^{(0)}$. 
Likewise, we have $\sum_{g\in E_x}f(g^{-1})<\lvert K(x)\rvert$. 

By Lemma \ref{cptKakutani}, 
there exists a $K$-full clopen subset $Y\subset K^{(0)}=G^{(0)}$ 
such that $K|Y=Y$. 
Let $\tilde f$ be as in the preceding lemma. 
Define $f_0\in C(G^{(0)},\Z)$ by $f_0=\tilde f|G^{(0)}$. 
By Lemma \ref{H1} (3), $[\tilde f]=[\tilde f-f_0]$ in $H_1(G)$. 
Since $G$ has a base of compact open $G$-sets, 
we may assume that there exist compact open $G$-sets 
$D_1,D_2,\dots,D_m\subset G\setminus G^{(0)}$ such that 
\[
\tilde f-f_0=1_{D_1}+1_{D_2}+\dots+1_{D_m}. 
\]
Notice that 
$r(D_i)$ and $s(D_i)$ are contained in $Y$ and 
that $D_i$ does not intersect with $K$. 
For any $y\in Y$ one has 
\begin{align*}
\sum_{i=1}^m\lvert s(D_i)\cap K(y)\rvert
&=\sum_{i=1}^m1_{s(D_i)}(y)
=\sum_{i=1}^ms_*(1_{D_i})(y)
=s_*(\tilde f-f_0)(y) \\
&=\sum_{s(g)=y,\ g\notin K}\tilde f(g)
=\sum_{g\in E_y}f(g)<\lvert K(y)\rvert. 
\end{align*}
It follows from Lemma \ref{cptHopf} (2) that 
there exist compact open $K$-sets $A_1,A_2,\dots,A_m$ such that 
$r(A_i)=s(D_i)$ for all $i=1,2,\dots,m$ and 
$s(A_i)$'s are mutually disjoint. 
In a similar way, we also have 
\[
\sum_{i=1}^m\lvert r(D_i)\cap K(y)\rvert<\lvert K(y)\rvert, 
\]
and so there exist compact open $K$-sets $B_1,B_2,\dots,B_m$ such that 
$s(B_i)=r(D_i)$ for all $i=1,2,\dots,m$ and 
$r(B_i)$'s are mutually disjoint. 
Besides, from $s_*(\tilde f-f_0)=r_*(\tilde f-f_0)$, we get 
\[
\sum_{i=1}^m1_{r(A_i)}=\sum_{i=1}^m1_{s(D_i)}
=\sum_{i=1}^m1_{r(D_i)}=\sum_{i=1}^m1_{s(B_i)}, 
\]
which implies 
\begin{align*}
\left\lvert\bigcup_{i=1}^ms(A_i)\cap K(x)\right\lvert
&=\sum_{i=1}^m\lvert s(A_i)\cap K(x)\rvert
=\sum_{i=1}^m\lvert r(A_i)\cap K(x)\rvert \\
&=\sum_{i=1}^m\lvert s(B_i)\cap K(x)\rvert
=\sum_{i=1}^m\lvert r(B_i)\cap K(x)\rvert
=\left\lvert\bigcup_{i=1}^mr(B_i)\cap K(x)\right\rvert
\end{align*}
for $x\in G^{(0)}$, because $A_i$ and $B_i$ are $K$-sets. 
Hence, by Lemma \ref{cptHopf} (3), we may replace $A_i$ and assume 
\[
\bigcup_{i=1}^ms(A_i)=\bigcup_{i=1}^mr(B_i). 
\]
Then 
\[
E=\bigcup_{i=1}^mB_iD_iA_i
\]
is a compact open $G$-set satisfying $s(E)=r(E)$. 
Let $k=\sum_{i=1}^m1_{A_i}+1_{B_i}\in C(K,\Z)$. 
It is easy to verify that $k$ is in $\ker\delta_1$. 
Therefore, by Lemma \ref{cptHomology}, 
$[k]$ is zero in $H_1(K)$ and hence in $H_1(G)$. 
By Lemma \ref{H1} (1), 
$1_{B_i}-1_{B_iD_i}+1_{D_i}$ and $1_{B_iD_i}-1_{B_iD_iA_i}+1_{A_i}$ 
are zero in $H_1(G)$ for all $i=1,2,\dots,m$. 
Consequently, 
\[
[f]=[\tilde f]=[\tilde f-f_0]=[1_{D_1}+\dots+1_{D_m}]
=[1_{D_1}+\dots+1_{D_m}]+[k]=[1_E]
\]
in $H_1(G)$. 
Let $F=E\cup(G^{(0)}\setminus s(E))$. 
Then $F$ is a compact open $G$-set 
satisfying $r(F)=s(F)=G^{(0)}$ and $[1_F]=[1_E]$. 
Thus $\gamma=\tau_F$ is in $[[G]]$ and $I(\gamma)=[f]$. 
\end{proof}

%%%%%%%%%%%%%%%%%%%%%%%%%%%%%%%%%%%%%%%%
\subsection{Kernel of the index map}

Next, we would like to determine the kernel of the index map. 

\begin{df}
\begin{enumerate}
\item We say that $\gamma\in\Homeo(G^{(0)})$ is elementary, 
if $\gamma$ is of finite order and 
$\{x\in G^{(0)}\mid\gamma^k(x)=x\}$ is clopen for any $k\in\N$. 
\item We let $[[G]]_0$ denote the subgroup of $[[G]]$ 
which is generated by all elementary homeomorphisms in $[[G]]$. 
Evidently $[[G]]_0$ is a normal subgroup of $[[G]]$. 
\end{enumerate}
\end{df}

\begin{lem}\label{elementary}
\begin{enumerate}
\item When $G$ is principal, 
$\gamma\in[[G]]$ is elementary if and only if $\gamma$ is of finite order. 
\item $\gamma\in[[G]]$ is elementary if and only if 
there exists an elementary subgroupoid $K\subset G$ 
such that $\gamma\in[[K]]$. 
\item If $\gamma\in[[G]]$ is elementary, then $I(\gamma)=0$. 
In particular, $\ker I$ contains $[[G]]_0$. 
\end{enumerate}
\end{lem}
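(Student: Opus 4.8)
The plan is to prove (1), (2), (3) in that order, with essentially all of the content sitting in the ``only if'' direction of (2). For (1), one implication is built into the definition of ``elementary'', so suppose $G$ is principal and $\gamma=\tau_U\in[[G]]$ has finite order. For each $k\in\N$ we have $\gamma^k=\tau_{U^k}$, where $U^k$ is the $k$-fold groupoid product, a compact open $G$-set with source and range all of $G^{(0)}$. Because $G$ is principal, the (unique) element of $U^k$ with a given source $x$ is an isotropy element precisely when it is the unit $x$, so $\{x\in G^{(0)}\mid\gamma^k(x)=x\}=U^k\cap G^{(0)}$, which is clopen in $G^{(0)}$. Hence $\gamma$ is elementary.

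For the ``if'' direction of (2), let $K\subset G$ be an elementary subgroupoid and $\gamma\in[[K]]$. Since $K$ is compact and étale, $x\mapsto\lvert r^{-1}(x)\cap K\rvert$ is locally constant, hence bounded by compactness of $G^{(0)}$; as $K$ is principal this equals $\lvert Kx\rvert$, so the $K$-orbits have uniformly bounded cardinality. Every element of $[[K]]$ permutes each $K$-orbit, so $\gamma$ has finite order, and part (1) applied to the principal groupoid $K$ shows $\{x\mid\gamma^k(x)=x\}$ is clopen for all $k$. Neither this set nor the order of $\gamma$ changes when $\gamma$ is viewed in $[[G]]$, so $\gamma$ is elementary in $[[G]]$.

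The ``only if'' direction of (2) is the heart of the matter. Let $\gamma\in[[G]]$ be elementary of order $d$ and let $U$ be the (unique, by essential principality) compact open $G$-set with $\gamma=\tau_U$. Since all $F_k=\{x\mid\gamma^k(x)=x\}$ are clopen, $G^{(0)}$ decomposes into finitely many clopen $\gamma$-invariant pieces $E_p$ ($p\mid d$) on which $\gamma$ has exact period $p$. Put $g_x=(s|U)^{-1}(x)$, a continuous section with $r(g_x)=\gamma(x)$, and for $0\le j$ set $h_{x,j}=g_{\gamma^{j-1}(x)}\cdots g_x\in U^j$ (with $h_{x,0}=x$), an element with source $x$ and range $\gamma^j(x)$. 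The crucial claim is that for $x\in E_p$ the return element $c(x)=h_{x,p}$, which lies in the isotropy group at $x$ and depends continuously on $x$, equals the unit $x$: the set $\{x\in E_p\mid c(x)\in G^{(0)}\}$ is clopen in the clopen set $E_p$, and were it proper, its complement would be a nonempty open subset of $G^{(0)}$ which, by essential principality, contains a point with trivial isotropy --- impossible, since $c$ takes a non-unit value there. Granting this, the cocycle identity $h_{\gamma^j(x),k}\,h_{x,j}=h_{x,j+k}$ together with $c(x)=x$ shows that
\[
K=\bigcup_{p\mid d}\;\bigcup_{j=0}^{p-1}\bigl(U^j\cap s^{-1}(E_p)\bigr)
\]
is a subgroupoid of $G$; it is compact and open (a finite union of compact open sets), has unit space $G^{(0)}$, and is principal because its only isotropy elements are units, so it is an elementary subgroupoid. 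Moreover $U\subset K$ (the layers with $j=1$, together with the part of $U$ over $E_1$, which lies in $G^{(0)}$ by the claim with $p=1$), so $\gamma=\tau_U\in[[K]]$. I expect the two fiddly points here to be the clopenness argument forcing $c(x)=x$ (this is exactly where essential principality of $G$ is used) and the routine but bookkeeping-heavy verification that $K$ is closed under product and inversion.

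Part (3) is then immediate: given elementary $\gamma\in[[G]]$, choose $K$ as in (2) with $\gamma=\tau_U$ and $U\subset K$. Then $1_U\in C_c(K,\Z)$ lies in $\ker\delta_1$, and $H_1(K)=0$ by Lemma \ref{cptHomology}, so $1_U=\delta_2(h)$ for some $h\in C_c(K^{(2)},\Z)$; extending $h$ by zero gives the same identity in $C_c(G,\Z)$, whence $I(\gamma)=[1_U]=0$ in $H_1(G)$. Since $I$ is a homomorphism (Lemma \ref{H1}(2)) and $[[G]]_0$ is generated by elementary elements of $[[G]]$, it follows that $[[G]]_0\subset\ker I$.
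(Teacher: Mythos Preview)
Your argument is correct and follows essentially the same route as the paper: for (2) the paper simply sets $K=(U\cup G^{(0)})^n$ where $\gamma^n=\id$, which coincides with your union $\bigcup_{p\mid d}\bigcup_{j=0}^{p-1}(U^j\cap s^{-1}(E_p))$, and the principality of $K$ is obtained exactly as you do, from essential principality together with the clopenness of the fixed-point sets. Your treatment of (1), the ``if'' direction of (2), and (3) is likewise the same as the paper's, only spelled out in more detail (the paper cites \cite{Msrtfg} for the ``if'' part and invokes Lemma~\ref{cptHomology} for (3)).
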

\begin{proof}
(1) This is clear from the definition. 

(2) The `if' part follows from 
\cite[Proposition 3.2]{Msrtfg} and its proof. 
Let us show the `only if' part. 
Suppose that $\gamma=\tau_U\in[[G]]$ is elementary. 
There exists $n\in\N$ such that $\gamma^n=\id$. 
Then $K=(U\cup G^{(0)})^n$ is a compact open subgroupoid of $G$. 
Since the fixed points of $\gamma^k$ form a clopen set for any $k\in\N$, 
$K$ is principal. 
It follows from $U\subset K$ that $\gamma$ belongs to $[[K]]$. 

(3) This readily follows from (2) and Lemma \ref{cptHomology}. 
\end{proof}

\begin{rem}
Even if $\gamma\in[[G]]$ is of finite order, 
$I(\gamma)$ is not necessarily zero. 
Let $\phi:\Z/N\Z\curvearrowright X$ be an action of $\Z/N\Z$ 
on a Cantor set $X$ by homeomorphisms and 
let $G_\phi$ be the transformation groupoid arising from $\phi$. 
The generator $\gamma$ of $\phi$ is clearly in $[[G_\phi]]$ and 
of finite order. 
It is well-known that 
$H_1(G_\phi)\cong H_1(\Z/N\Z,C(X,\Z))$ is isomorphic to 
\[
\{f\in C(X,\Z)\mid f=f\circ\gamma\}/
\{f+f\circ\gamma+\dots+f\circ\gamma^{N-1}\mid f\in C(X,\Z)\}. 
\]
Hence, when $\phi$ is not free, $I(\gamma)$ is not zero in $H_1(G_\phi)$. 
\end{rem}

\begin{rem}
In Corollary \ref{bijective}, it will be shown that 
$[[G]]/[[G]]_0$ is isomorphic to $H_1(G)$ via the index map, 
when $G$ is almost finite and principal. 
This, however, does not mean that $H_1(G)$ is always torsion free. 
Indeed, it was shown in \cite[Section 6.4]{GHK} that 
the dual canonical $D_6$ tiling contains $2$-torsions in its $H_1$-group, 
and so there exists a free action $\phi$ of $\Z^3$ on a Cantor set 
by homeomorphisms such that $H_1(G_\phi)$ contains $2$-torsions. 
Note that $G_\phi$ is almost finite by Lemma \ref{ZNisAF}. 
\end{rem}

In order to prove Theorem \ref{injective}, we need a series of lemmas. 

\begin{lem}\label{decomp1}
Suppose that $G$ is almost finite. 
For any $\gamma\in[[G]]$, 
there exist an elementary homeomorphism $\gamma_0\in[[G]]$ and 
a clopen subset $V\subset G^{(0)}$ such that 
$\gamma_0\gamma(x)=x$ for any $x\in V$ and 
$\mu(V)\geq1/2$ for any $\mu\in M(G)$. 
\end{lem}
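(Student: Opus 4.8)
The plan is the following. Let $U$ be the compact open $G$-set with $\gamma=\tau_U$; it is unique because $G$ is essentially principal, and $r(U)=s(U)=G^{(0)}$. Using almost finiteness of $G$ (applied to the compact set $U$ with $\ep=\tfrac12$), I would choose an elementary subgroupoid $K\subset G$ with
\[
\lvert UKx\setminus Kx\rvert<\tfrac{1}{2}\lvert K(x)\rvert\qquad\text{for every }x\in G^{(0)},
\]
and put $V=s(U\cap K)$. Since $U\cap K$ and $U\setminus K$ are both compact open $G$-sets and $s$ is a local homeomorphism, $V$ and its complement $s(U\setminus K)$ are clopen, and on $V$ the homeomorphism $\gamma$ restricts to $\tau_{U\cap K}$, which moves each point within its own $K$-orbit. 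The element $\gamma_0$ will be built so as to invert $\tau_{U\cap K}$ on $V$ while remaining inside $[[K]]$.

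First I would estimate $\mu(V)$ from below. Since $U\setminus K$ is a $G$-set, $\lvert s(U\setminus K)\cap K(x)\rvert=\lvert(U\setminus K)Kx\rvert=\lvert UKx\setminus Kx\rvert<\tfrac12\lvert K(x)\rvert$ for all $x$, exactly as in the proof of Lemma \ref{Mneqempty}. By Lemma \ref{cptHopf}(1) this gives $\mu(s(U\setminus K))<\tfrac12$ for all $\mu\in M(K)$, and $M(G)\subset M(K)$ because $K$ is an open subgroupoid of $G$. Hence $\mu(V)=1-\mu(s(U\setminus K))>\tfrac12$ for all $\mu\in M(G)$.

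Next I would construct $\gamma_0$. Let $O=(U\cap K)^{-1}$, a compact open $K$-set with $r(O)=V$ and $s(O)=r(U\cap K)$. Since $O\subset K$, the homeomorphism $\tau_O$ preserves $K$-orbits, so $\lvert r(O)\cap K(x)\rvert=\lvert s(O)\cap K(x)\rvert$ for every $x$; applying Lemma \ref{cptHopf}(3) to $K$ yields a compact open $K$-set $O'$ with $r(O')=G^{(0)}\setminus r(O)$ and $s(O')=G^{(0)}\setminus s(O)$. Then $\hat O=O\cup O'$ is a compact open $K$-set with $r(\hat O)=s(\hat O)=G^{(0)}$, so $\gamma_0:=\tau_{\hat O}$ lies in $[[K]]$ and is elementary by Lemma \ref{elementary}(2). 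Finally, for $x\in V$ let $g$ be the element of $U\cap K$ with $s(g)=x$; then $\gamma(x)=r(g)$, and $g^{-1}\in O\subset\hat O$ has $s(g^{-1})=r(g)$, whence $\gamma_0(\gamma(x))=\tau_{\hat O}(r(g))=r(g^{-1})=x$, as required.

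The routine content is the topological bookkeeping (clopenness of $V$, that $\tau_O$ respects $K$-orbits, and that $\hat O$ assembles into a compact open $K$-set with full range and source). The step that genuinely uses the hypotheses, and which I regard as the main point, is the lower bound $\mu(V)\ge\tfrac12$: it rests on almost finiteness (to produce $K$ with $\lvert UKx\setminus Kx\rvert$ uniformly small), on Lemma \ref{cptHopf}(1), and on $M(G)\subset M(K)$. The small design choice that makes the two requirements hold simultaneously is taking $O$ to be literally $(U\cap K)^{-1}$ rather than any approximation, which forces $O\subset K$ (so that $\gamma_0$ stays elementary) and makes $\gamma_0$ invert $\gamma$ on all of $V$ at once.
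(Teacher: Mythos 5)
Your proposal is correct and follows essentially the same route as the paper: the same choice of $K$ via almost finiteness with $\ep=\tfrac12$, the same $V=s(U\cap K)$ with the bound from Lemma \ref{cptHopf} (1), and the same completion of $(U\cap K)^{-1}$ to a full compact open $K$-set via Lemma \ref{cptHopf} (3) (your $O'$ is exactly the paper's $W$). No issues.
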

\begin{proof}
Take a compact open $G$-set $U$ satisfying $\gamma=\tau_U$. 
Since $G$ is almost finite, 
there exists an elementary subgroupoid $K\subset G$ such that 
\[
\lvert UKx\setminus Kx\rvert<2^{-1}\lvert K(x)\rvert
\]
for all $x\in G^{(0)}$. 
Let $V=s(U\cap K)$. 
Then 
\[
\lvert K(x)\cap V\rvert=\lvert K(x)\rvert-\lvert UKx\setminus Kx\rvert
\geq2^{-1}\lvert K(x)\rvert. 
\]
By Lemma \ref{cptHopf} (1), 
we have $\mu(V)\geq2^{-1}$ for all $\mu\in M(K)$ 
and hence for all $\mu\in M(G)$. 
Moreover, one also has 
\[
\lvert K(x)\cap s(U\setminus K)\rvert
=\lvert K(x)\setminus s(U\cap K)\rvert
=\lvert K(x)\setminus r(U\cap K)\rvert
=\lvert K(x)\cap r(U\setminus K)\rvert
\]
for all $x\in G^{(0)}$. 
It follows from Lemma \ref{cptHopf} (3) that 
there exists a compact open $K$-set $W$ such that 
$s(W)=r(U\setminus K)$ and $r(W)=s(U\setminus K)$. 
Then $O=W\cup(U^{-1}\cap K)$ is a compact open $K$-set 
satisfying $s(O)=r(O)=G^{(0)}$, 
and so $\gamma_0=\tau_O$ is elementary by Lemma \ref{elementary} (2). 
Clearly $\gamma_0\gamma(x)=x$ for $x\in V$, 
which completes the proof. 
\end{proof}

\begin{lem}\label{decomp2}
Suppose that $G$ is almost finite. 
Let $V\subset G^{(0)}$ be a clopen subset and let $\gamma\in\ker I$. 
Suppose that 
$\gamma(x)=x$ for any $x\in V$ and 
$\mu(V)\geq1/2$ for any $\mu\in M(G)$. 
Then, there exist an elementary subgroupoid $K\subset G$ and $\tau_U\in[[G]]$ 
such that $\tau_U\gamma^{-1}$ is elementary and 
\[
\sum_{g_1,g_2\in K}1_U(g_1gg_2)
=\sum_{g_1,g_2\in K}1_U(g_1g^{-1}g_2)
\]
holds for all $g\in G$. 
\end{lem}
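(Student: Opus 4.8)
The plan is to realize $\gamma$ as $\tau_W$ for the unique compact open $G$-set $W$, to use the hypothesis $\gamma\in\ker I$ to exhibit the $1$-cycle $1_W$ as a boundary, and then to invoke almost finiteness to replace $W$ by a $G$-set $U$ that differs from $W$ by an elementary element and whose $K$-average is symmetric under inversion.

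\emph{Setup.} Since $G$ is essentially principal and $\gamma$ fixes $V$ pointwise, the open set $W\cap s^{-1}(V)$ consists of isotropy elements, hence lies in $\operatorname{int}(G')=G^{(0)}$; thus $V\subseteq W$, and $W':=W\setminus V$ is a compact open $G$-set with $s(W')=r(W')=G^{(0)}\setminus V$. As $[1_V]=0$ by Lemma \ref{H1}(3) and $[1_W]=I(\gamma)=0$, we get $[1_{W'}]=0$ in $H_1(G)$, so $1_{W'}=\delta_2(h)$ for some $h\in C_c(G^{(2)},\Z)$; refining $h$, we may take it to be an integer combination of indicator functions of boxes $\{(g,g')\in G^{(2)}\mid g\in A_i,\ g'\in B_i\}$, and we record the finitely many compact open $G$-sets $A_i$, $B_i$, $A_iB_i$ that occur. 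Now apply almost finiteness to a compact subset of $G$ containing $W$, $W^{-1}$ and all of the $A_i^{\pm1}$, $B_i^{\pm1}$, $(A_iB_i)^{\pm1}$, for a small $\ep>0$: this yields an elementary subgroupoid $K$ with $\lvert CKx\setminus Kx\rvert<\ep\lvert K(x)\rvert$ for every $x$ and every such $G$-set $C$. By the compactness argument in the proof of Lemma \ref{Mneqempty}, applied to $1_V$ and using $\mu(V)\geq1/2$ for all $\mu\in M(G)$, we may also require $\lvert K(x)\cap V\rvert\geq(1/2-\ep)\lvert K(x)\rvert$ for all $x$. Finally fix a $K$-full clopen $Y$ with $K|Y=Y$ (Lemma \ref{cptKakutani}); it is then $G$-full as well.

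\emph{Construction of $U$.} By the lemma preceding Theorem \ref{surjective}, the function $\widetilde{1_{W'}}$ formed with respect to $K$ and $Y$ lies in $\ker\delta_1$, is supported in $G|Y$, and represents $[1_{W'}]=0$; passing through the homological similarity $G\sim G|Y$ of Theorem \ref{reduction}(2), its restriction to $G|Y$ is a boundary in $H_1(G|Y)$. For $g\in G|Y$ one has $\widetilde{1_{W'}}(g)=\lvert W'\cap KgK\rvert$, the number of edges of $W'$ in the $K$-bi-class of $g$, and the condition $K|Y=Y$ makes $G|Y$ a complete set of representatives for the $K$-bi-classes. The construction reroutes the edges of $W'$ that cross $K$-orbit boundaries: guided by the $2$-chain filling just obtained and by the relation $[1_{U}]=-[1_{U^{-1}}]$ following from Lemma \ref{H1}(4), and using Lemma \ref{cptHopf}(2),(3) together with the estimate $\lvert K(x)\cap V\rvert\geq(1/2-\ep)\lvert K(x)\rvert$ (which leaves at least half of every $K$-orbit free inside $V$), one obtains a compact open $G$-set $U$ with $s(U)=r(U)=G^{(0)}$, agreeing with $W$ off a $K$-saturated clopen set of small measure, such that $\widetilde{1_U}|_{G|Y}$ is invariant under $g\mapsto g^{-1}$ and such that $UW^{-1}$ is a compact open $G$-set whose homeomorphism $\tau_U\gamma^{-1}$ has finite order with clopen fixed-point sets for all of its powers; the latter makes $\tau_U\gamma^{-1}$ elementary. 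For the symmetry condition, observe that $g\mapsto\sum_{g_1,g_2\in K}1_U(g_1gg_2)$ is invariant under left and right translation by $K$, hence determined by its restriction to $G|Y$, where it equals $\widetilde{1_U}|_{G|Y}$; so the asserted identity is equivalent to the inversion-invariance of $\widetilde{1_U}|_{G|Y}$, which holds by construction.

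\emph{The crux.} The real work lies in the rerouting step. One must show that the vanishing of $[1_{W'}]$ in $H_1(G)$ --- made explicit by the $2$-chain filling of $\widetilde{1_{W'}}|_{G|Y}$, together with $[1_{U}]=-[1_{U^{-1}}]$ --- forces the ``forward'' and ``backward'' $K$-bi-class counts of $W'$ to be matchable in pairs, so that a symmetrizing choice of $U$ exists within the trivial $H_1$-class; this is precisely where $\gamma\in\ker I$ is used. One must also check that the rerouting is carried out by a finite-order homeomorphism whose powers all have clopen fixed-point sets, which is where the F\o lner estimates from almost finiteness and the half-orbit room secured by $\mu(V)\geq1/2$ are exactly what is needed. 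The remaining verification is an orbit-by-orbit bookkeeping inside the finite $K$-orbits, routine once this room is available.
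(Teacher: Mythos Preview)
Your proposal is an outline, not a proof, and the missing piece is exactly the content of the lemma. The setup is fine: writing $\gamma=\tau_W$, observing $V\subset W$, expressing $1_W$ as a $2$-boundary via finitely many compact open $G$-sets, choosing $K$ by almost finiteness with the extra control on $\lvert K(x)\cap V\rvert$, and noting that the symmetry condition is equivalent to inversion-invariance of the $K$-average on $G|Y$ --- all of this is correct and matches the paper. But your ``Construction of $U$'' paragraph asserts that ``one obtains'' a $U$ with the two required properties without constructing it, and your ``crux'' paragraph openly concedes that ``the real work lies in the rerouting step,'' which you have not carried out. The observation $[1_U]=-[1_{U^{-1}}]$ in $H_1$ is not enough: the symmetry must hold at the level of $K$-averages, not merely in homology, and you give no mechanism that forces this simultaneously with $\tau_U\gamma^{-1}$ being elementary.

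The paper supplies that mechanism with a specific $3$-cycle construction. After writing $1_O=\sum_i(1_{A_i}-1_{A_iB_i}+1_{B_i})-\sum_j(1_{C_j}-1_{C_jD_j}+1_{D_j})$, one trims each piece to a $K$-transverse version $A'_i,\dots,D'_j$, uses the F{\o}lner estimate and the room in $V$ (Lemma~\ref{cptHopf}(2)) to conjugate by $K$-sets into disjoint slots in $V$, and forms
\[
F=\bigcup_i\bigl(A''_i\cup B''_i\cup(A''_iB''_i)^{-1}\bigr)^{-1}\cup\bigcup_j\bigl(C''_j\cup D''_j\cup(C''_jD''_j)^{-1}\bigr),
\]
a $G$-set with $s(F)=r(F)\subset V$ and $F^3\subset G^{(0)}$; thus $\tau_{\widetilde F}$ (with $\widetilde F=F\cup(G^{(0)}\setminus s(F))$) is elementary of order dividing $3$. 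Setting $U=F\cup O(G^{(0)}\setminus s(F))$ gives $\tau_U\gamma^{-1}=\tau_{\widetilde F}$. The symmetry holds because, after $K$-averaging and modulo $C(K,\Z)$ (which is automatically symmetric), the contribution of $F$ combines with the boundary expression for $1_O$ to leave only sums of the form $1_X+1_{X^{-1}}$. This $3$-cycle idea --- building an order-$3$ elementary correction whose $K$-average exactly complements the $2$-boundary pieces into inversion-symmetric pairs --- is the substance of the proof, and it is absent from your proposal.
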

\begin{proof}
Let $\gamma=\tau_O$. 
Since $I(\gamma)=0$, $1_O$ is in $\Ima\delta_2$. 
It follows from Lemma \ref{H1} (1) that 
there exist compact open $G$-sets 
$A_1,\dots,A_n$, $B_1,\dots,B_n$, $C_1,\dots,C_m$ and $D_1,\dots,D_m$ 
such that $s(A_i)=r(B_i)$, $s(C_j)=r(D_j)$ and 
\[
1_O=\left(\sum_{i=1}^n1_{A_i}-1_{A_iB_i}+1_{B_i}\right)
-\left(\sum_{j=1}^m1_{C_j}-1_{C_jD_j}+1_{D_j}\right). 
\]
Set 
\[
E=\bigcup_{i=1}^n(A_i\cup B_i\cup A_iB_i)
\cup\bigcup_{j=1}^m(C_j\cup D_j\cup C_jD_j). 
\]
Almost finiteness of $G$ yields 
an elementary subgroupoid $K\subset G$ such that 
\[
\lvert (E\cup E^{-1})Kx\setminus Kx\rvert
<\frac{1}{18(n{+}m)}\lvert K(x)\rvert
\]
for all $x\in G^{(0)}$. 
Since $\mu(V)$ is not less than $1/2$ for every $\mu\in M(G)$, 
by the proof of Lemma \ref{Mneqempty}, 
we may further assume 
\[
\lvert V\cap K(x)\rvert>3^{-1}\lvert K(x)\rvert
\]
for all $x\in G^{(0)}$. 
Define compact open $G$-sets $A_i'$, $B_i'$, $C_j'$, $D_j'$ by 
\[
A_i'=A_i\setminus((A_i\cap K)r(B_i\cap K)),\quad 
B_i'=B_i\setminus(s(A_i\cap K)(B_i\cap K)), 
\]
\[
C_j'=C_j\setminus((C_j\cap K)r(D_j\cap K)),\quad 
D_j'=D_j\setminus(s(C_j\cap K)(D_j\cap K)). 
\]
Then $s(A_i')=r(B_i')$, $s(C_j')=r(D_j')$ and 
\[
1_O=\left(\sum_{i=1}^n1_{A'_i}-1_{A'_iB'_i}+1_{B'_i}\right)
-\left(\sum_{j=1}^m1_{C'_j}-1_{C'_jD'_j}+1_{D'_j}\right)+k
\]
for some $k\in C(K,\Z)$. 
In addition, 
since 
\[
(A_i\cap K)r(B_i\cap K)=(A_i\cap K)\cap(A_iB_i\cap K)B_i^{-1}, 
\]
we have 
\begin{align*}
\lvert r(A'_i)\cap K(x)\rvert
&\leq\lvert r(A_i\setminus(A_i\cap K))\cap K(x)\rvert
+\lvert r(A_i\setminus(A_iB_i\cap K)B_i^{-1})\cap K(x)\rvert \\
&=\lvert A_i^{-1}Kx\setminus Kx\rvert
+\lvert (A_iB_i)^{-1}Kx\setminus Kx\rvert \\
&<\frac{1}{9(n{+}m)}\lvert K(x)\rvert
\end{align*}
for any $x\in G^{(0)}$. 
Similar estimates can be obtained 
for $s(A'_i)$, $s(B'_i)$, $r(C'_j)$, $s(C'_j)$ and $s(D'_j)$. 
By Lemma \ref{cptHopf} (2), 
we can find compact open $K$-sets $P_{k,i}$ ($k=1,2,3$, $i=1,2,\dots,n$) and 
$Q_{l,j}$ ($l=1,2,3$, $j=1,2,\dots,m$) such that 
\[
s(P_{1,i})=r(A'_i),\quad s(P_{2,i})=s(A'_i),\quad s(P_{3,i})=s(B'_i)
\]
\[
s(Q_{1,j})=r(C'_j),\quad s(Q_{2,j})=s(C'_j),\quad s(Q_{3,j})=s(D'_j)
\]
and the ranges of $P_{k,i}$'s and $Q_{l,j}$'s are 
mutually disjoint and contained in $V$. 
Define compact open $G$-sets $A''_i$, $B''_i$, $C''_j$, $D''_j$ by 
\[
A''_i=P_{1,i}A'_iP_{2,i}^{-1},\quad 
B''_i=P_{2,i}B'_iP_{3,i}^{-1},\quad 
C''_j=Q_{1,j}C'_jQ_{2,j}^{-1},\quad 
D''_i=Q_{2,j}D'_iQ_{3,j}^{-1}. 
\]
Then 
\[
F=\bigcup_{i=1}^n(A''_i\cup B''_i\cup(A''_iB''_i)^{-1})^{-1}
\cup\bigcup_{j=1}^m(C''_j\cup D''_j\cup(C''_jD''_j)^{-1})
\]
is a compact open $G$-set satisfying $s(F)=r(F)=F^3\subset V$. 
Moreover, $\tau_F$ and $\tau_F^2$ have no fixed points. 
Set $F_0=G^{(0)}\setminus s(F)$ and $\widetilde F=F\cup F_0$. 
Then $\tau_{\widetilde F}$ is an elementary homeomorphism in $[[G]]$. 
Define a compact open subset $U\subset G$ by $U=F\cup OF_0$. 
Since $s(F)$ is contained in $V$, 
$U$ is a $G$-set and $\tau_{\tilde F}\tau_O=\tau_U$. 
Furthermore, 
\begin{align*}
1_U&=1_F+1_{OF_0}=1_F+1_O-1_{s(F)} \\
&=1_F+\left(\sum_{i=1}^n1_{A'_i}-1_{A'_iB'_i}+1_{B'_i}\right)
-\left(\sum_{j=1}^m1_{C'_j}-1_{C'_jD'_j}+1_{D'_j}\right)+k-1_{s(F)}. 
\end{align*}
It is not so hard to check 
\[
\sum_{g_1,g_2\in K}1_U(g_1gg_2)
=\sum_{g_1,g_2\in K}1_U(g_1g^{-1}g_2). 
\]
\end{proof}

\begin{lem}\label{decomp3}
Let $K$ be an elementary subgroupoid of $G$ and 
let $\gamma{=}\tau_U\in[[G]]$. 
If 
\[
\sum_{g_1,g_2\in K}1_U(g_1gg_2)
=\sum_{g_1,g_2\in K}1_U(g_1g^{-1}g_2)
\]
holds for all $g\in G$, 
then there exists $\gamma_0\in[[G]]$ such that 
$\gamma_0^2\in[[K]]$ and $\gamma_0\gamma\in[[K]]$. 
\end{lem}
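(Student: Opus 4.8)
\emph{Overview.} The plan is to reduce the statement to the construction of one compact open $K$-set and then build that set combinatorially over the $K$-orbit structure, the analytic input being Lemma~\ref{cptHopf}.

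\emph{Reduction.} I would first record the elementary observation that a homeomorphism $\delta\in[[G]]$ which carries every $K$-orbit to itself automatically lies in $[[K]]$: using the isomorphism $Y_f\cong K$ provided by Lemmas~\ref{cptKakutani} and \ref{tower1} to label the finite $K$-orbits by ``position'', $\delta$ is, on each clopen set of a suitable finite partition of $G^{(0)}$, a locally constant permutation of positions, hence it equals $\tau_V$ for a compact open $K$-set $V\subset K$; and by essential principality this $V$ is the unique compact open $G$-set implementing $\delta$. Granting this, it suffices to find a compact open $K$-set $W\subset K$ with $r(W)=s(W)=G^{(0)}$ such that the homeomorphism $\gamma\tau_W\gamma=\tau_{UWU}$ preserves every $K$-orbit. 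Indeed, then $W$ and $UWU$ both lie in $K$, so with $\gamma_0:=\tau_{UW}=\gamma\tau_W\in[[G]]$ one gets $\gamma_0\gamma=\tau_{UWU}\in[[K]]$ and $\gamma_0^2=\tau_{(UWU)W}\in[[K]]$, as $[[K]]$ is a group.

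\emph{Reading the hypothesis.} Since $K$ is principal its isotropy is trivial, so for every $g\in G$ and every $h\in U$ lying in the double coset $KgK$ there is exactly one pair $(g_1,g_2)\in K\times K$ with $g_1gg_2=h$; hence $\sum_{g_1,g_2\in K}1_U(g_1gg_2)=\lvert U\cap KgK\rvert$. Because $Kg^{-1}K=(KgK)^{-1}$ and $\lvert U^{-1}\cap D\rvert=\lvert U\cap D^{-1}\rvert$ for a double coset $D$, the hypothesis says exactly that $\lvert U\cap D\rvert=\lvert U\cap D^{-1}\rvert$ for all double cosets $D\subset G$; informally, $\gamma$ carries as many points from one $K$-orbit to another as it carries back. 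In the finite model this is the symmetry of the ``transition matrix'' of $\gamma$ on $K$-orbits, and it is a routine combinatorial exercise that any permutation $\gamma$ with symmetric transition matrix admits an orbit-preserving permutation $\kappa$ with $\gamma\kappa\gamma$ again orbit-preserving: let $\gamma_0$ exchange, for each pair of $K$-orbits $P,Q$, the cell $\gamma(\{x\in P:\gamma(x)\in Q\})\subset Q$ with the cell $\gamma(\{x\in Q:\gamma(x)\in P\})\subset P$ (of equal cardinality by the symmetry), and put $\kappa=\gamma^{-1}\gamma_0$; one checks directly that both $\kappa$ and $\gamma\kappa\gamma=\gamma_0\gamma$ send each $K$-orbit to itself.

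\emph{Construction.} To topologise this, using the position-labelling I would split $U$ into finitely many compact open $G$-sets, each carrying a clopen set of points at a fixed position of the $K$-orbits lying over a clopen piece of $Y$ onto a clopen set of points at a fixed position of the $K$-orbits lying over another clopen piece of $Y$, and each inducing a homeomorphism between these two pieces of $Y$. After refining the partition so that the equalities $\lvert U\cap D\rvert=\lvert U\cap D^{-1}\rvert$ hold fibrewise, the symmetry lets me match these pieces in reversed pairs; for each matched pair I produce, by Lemma~\ref{cptHopf} applied inside the compact principal groupoid $K$, a compact open $K$-bisection between the relevant positions, and I assemble these into a single $\kappa\in[[K]]$ implementing the cell exchange of the previous paragraph. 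Finally one verifies, fibrewise and combinatorially, that $\gamma\kappa\gamma$ preserves every $K$-orbit; taking $W$ to be the compact open $K$-set with $\tau_W=\kappa$ completes the proof via the reduction.

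\emph{Main obstacle.} The only genuinely non-formal work is the bookkeeping in the construction: choosing and refining the decomposition of $U$ so that ``$\lvert U\cap D\rvert=\lvert U\cap D^{-1}\rvert$'' turns into an actual pairing of compact open $G$-sets, keeping the position-and-orbit data coherent across the pieces, and gluing the individual $K$-bisections into one globally defined element $\kappa\in[[K]]$. The subtlety coming from essential principality — that a homeomorphism preserving $K$-orbits might a priori be implemented by a $G$-set not obviously inside $K$ — is handled once and for all by the elementary observation in the reduction, so that throughout the construction one needs only to check a combinatorial orbit-preservation statement.
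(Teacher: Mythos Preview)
Your proposal is correct and follows essentially the same route as the paper. Both arguments read the hypothesis as the double-coset symmetry $\lvert U\cap KgK\rvert=\lvert U\cap Kg^{-1}K\rvert$, decompose $U$ accordingly, and invoke Lemma~\ref{cptHopf}(3) to manufacture compact open $K$-sets matching the pieces; your $W=\bigcup A_{i,j}$ is exactly $U^{-1}B$ in the paper's notation, and your $\gamma_0=\gamma\tau_W$ is the paper's $\tau_B$.

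The one genuine organisational difference is how the containment $UWU\subset K$ is verified. The paper does it by a direct double-coset calculation: writing $D_{i,j}=U\cap K(C_i\cap C_j^{-1})K$ for $G|Y$-sets $C_i$, one checks that any element of $D_{j,i}A_{i,j}D_{i,j}$ has the form $k_1c(k')c^{-1}k_2$ with $c\in C_j\cap C_i^{-1}$ and $k',k_1,k_2\in K$, hence lies in $K$; no appeal to essential principality is needed. You instead isolate the general principle ``$\delta\in[[G]]$ preserves every $K$-orbit $\Rightarrow\delta\in[[K]]$'' once (via the position labelling and uniqueness of the implementing bisection), and then only have to check the combinatorial orbit-preservation of $\gamma\kappa\gamma$. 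Your route is a bit more conceptual and makes the role of essential principality explicit; the paper's route is slightly more self-contained and shows that the specific $G$-set one writes down already sits inside $K$, without passing through the reduction. Either way the bookkeeping you flag as the ``main obstacle'' is precisely the decomposition $U=\bigsqcup D_{i,j}$ by double-coset type that the paper carries out.
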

\begin{proof}
By Lemma \ref{cptKakutani}, 
there exists a $K$-full clopen subset $Y\subset K^{(0)}=G^{(0)}$ 
such that $K|Y=Y$. 
If $C$ is a compact open $G|Y$-set, then 
\[
KCK=\{g_1gg_2\in G\mid g_1g_2,\in K,\ g\in C\}
\]
is a compact open subset of $G$. 
Since $G|Y$ is written as a disjoint union of compact open $G|Y$-sets, 
there exist mutually disjoint compact open $G|Y$-sets $C_1,C_2,\dots,C_n$ 
such that $U\cup U^{-1}$ is contained in $\bigcup_iKC_iK$. 
Note that $KC_iK$'s are also mutually disjoint, because of $K|Y=Y$. 
Define a (possibly empty) compact open $G$-set $D_{i,j}$ by 
\[
D_{i,j}=U\cap KC_iK\cap KC_j^{-1}K=U\cap K(C_i\cap C_j^{-1})K, 
\]
so that $1_U=\sum_{i,j}1_{D_{i,j}}$. 
Take $i,j\in\{1,2,\dots,n\}$ and $y\in Y$ arbitrarily. 
If $r(C_i\cap C_j^{-1})$ does not contain $y$, then clearly 
\[
r(D_{i,j})\cap K(y)=\emptyset=s(D_{j,i})\cap K(y). 
\]
Suppose that $r(C_i\cap C_j^{-1})$ contains $y$. 
There exists a unique element $g\in C_i\cap C_j^{-1}$ such that $r(g)=y$ and 
one has 
\begin{align*}
\lvert r(D_{i,j})\cap K(y)\rvert
&=\lvert\{g_1gg_2\in U\mid g_1,g_2\in K\}\rvert \\
&=\sum_{g_1,g_2\in K}1_U(g_1gg_2) \\
&=\sum_{g_1,g_2\in K}1_U(g_1g^{-1}g_2) \\
&=\lvert\{g_1g^{-1}g_2\in U\mid g_1,g_2\in K\}\rvert
=\lvert s(D_{j,i})\cap K(y)\rvert. 
\end{align*}
It follows from Lemma \ref{cptHopf} (3) that 
there exists a compact open $K$-set $A_{i,j}$ 
such that $s(A_{i,j})=r(D_{i,j})$ and $r(A_{i,j})=s(D_{j,i})$. 
Set 
\[
B=\bigcup_{i,j=1}^nD_{j,i}A_{i,j}. 
\]
It is easy to check that 
$B$ is a compact open $G$-set satisfying $r(B)=s(B)=G^{(0)}$. 
Furthermore, one can see that 
$D_{j,i}A_{i,j}D_{i,j}$ is a compact open $K$-set for any $i,j$. 
Let $\gamma_0=\tau_B$. 
Then we obtain $\gamma_0^2\in[[K]]$ and $\gamma_0\gamma\in[[K]]$. 
\end{proof}

From Lemma \ref{decomp1}, \ref{decomp2}, \ref{decomp3} and 
Lemma \ref{elementary}, 
we deduce the following theorem. 

\begin{thm}\label{injective}
Suppose that $G$ is almost finite. 
Suppose that $\gamma\in[[G]]$ is in the kernel of the index map. 
Then there exist $\gamma_1,\gamma_2,\gamma_3,\gamma_4\in[[G]]$ such that 
$\gamma=\gamma_1\gamma_2\gamma_3\gamma_4$ and 
$\gamma_1,\gamma_2^2,\gamma_3,\gamma_4$ are elementary. 
In particular, 
$\gamma$ is written as a product of four elements in $[[G]]$ of finite order. 
\end{thm}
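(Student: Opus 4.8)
The strategy is to chain Lemmas \ref{decomp1}, \ref{decomp2} and \ref{decomp3}, stripping off one (square-)elementary factor at each stage, and then to keep careful track of which of the resulting factors are elementary.

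Given $\gamma\in\ker I$, I would first invoke Lemma \ref{decomp1} to get an elementary $\alpha\in[[G]]$ and a clopen $V\subset G^{(0)}$ with $\alpha\gamma$ fixing $V$ pointwise and $\mu(V)\geq1/2$ for all $\mu\in M(G)$. Since $I$ is a homomorphism (Lemma \ref{H1}(2)) and $I(\alpha)=0$ (Lemma \ref{elementary}(3)), the element $\gamma':=\alpha\gamma$ again lies in $\ker I$, and it fixes $V$, so it meets the hypotheses of Lemma \ref{decomp2}. Applying that lemma to $\gamma'$ and $V$ yields an elementary subgroupoid $K\subset G$ and $\tau_U\in[[G]]$ such that $\beta:=\tau_U(\gamma')^{-1}$ is elementary and $\tau_U$ satisfies the $K$-symmetry identity that is exactly the hypothesis of Lemma \ref{decomp3}. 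Feeding $K$ and $\tau_U$ into Lemma \ref{decomp3} produces $\gamma_0\in[[G]]$ with $\gamma_0^2\in[[K]]$ and $\kappa:=\gamma_0\tau_U\in[[K]]$.

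Now I would unwind the substitutions: $\tau_U=\gamma_0^{-1}\kappa$, hence $\gamma'=\beta^{-1}\gamma_0^{-1}\kappa$, hence $\gamma=\alpha^{-1}\beta^{-1}\gamma_0^{-1}\kappa$, a product of exactly four elements of $[[G]]$. By Lemma \ref{elementary}(2) every element of $[[K]]$ is elementary, so $\kappa$ and $\gamma_0^2$, and therefore $\kappa^{-1}$ and $(\gamma_0^{-1})^2=(\gamma_0^2)^{-1}$, are elementary; moreover inverses of elementary elements are elementary, so $\alpha^{-1}$ and $\beta^{-1}$ are elementary. Thus three of the four factors are elementary and the remaining one, $\gamma_0^{-1}$, has elementary square. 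Running the same construction on $\gamma^{-1}\in\ker I$ and inverting the resulting product rearranges this into $\gamma=\gamma_1\gamma_2\gamma_3\gamma_4$ with $\gamma_1,\gamma_2^2,\gamma_3,\gamma_4$ elementary, as stated, and the last sentence follows because elementary elements have finite order and an element whose square has finite order has finite order. The substantive work has already been carried out in Lemmas \ref{decomp1}--\ref{decomp3} — the F\o lner-type averaging over $K$-orbits together with the repeated use of Lemma \ref{cptHopf} to manufacture the required compact open $K$-sets — so the only point to watch in the assembly is that the product really has exactly four factors and that one keeps straight the distinction between being elementary and merely having elementary square.
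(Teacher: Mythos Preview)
Your argument is correct and is exactly the assembly the paper has in mind: chain Lemmas \ref{decomp1}, \ref{decomp2}, \ref{decomp3}, use Lemma \ref{elementary}(2) to see that anything in $[[K]]$ is elementary, and read off the four factors. The only cosmetic point is that your detour through $\gamma^{-1}$ to shift the square-elementary factor from position $3$ to position $2$ is unnecessary: from $\gamma=\alpha^{-1}\beta^{-1}\gamma_0^{-1}\kappa$ you can write $\gamma=\alpha^{-1}\,(\beta^{-1}\gamma_0^{-1}\beta)\,\beta^{-1}\,\kappa$, and since conjugates of elementary elements are elementary (the fixed-point set of $(h e h^{-1})^k$ is $h(\mathrm{Fix}(e^k))$, hence clopen), the square of the second factor is $\beta^{-1}(\gamma_0^{-1})^2\beta$, which is elementary.
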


\begin{rem}
The theorem above is a generalization of \cite[Lemma 4.1]{Msrtfg}, 
in which it was shown that 
any $\gamma\in[[G_\phi]]\cap\ker I$ can be written as a product of 
two elementary homeomorphisms 
when $\phi$ is a minimal free action of $\Z$ on a Cantor set. 
\end{rem}

%%%%%%%%%%%%%%%%%%%%%%%%%%%%%%%%%%%%%%%%
\subsection{Conclusions}

We conclude this paper with the following immediate consequences of 
Theorem \ref{surjective} and Theorem \ref{injective}. 

\begin{cor}\label{Phi1}
Suppose that $G$ is almost finite. 
Then there exists a homomorphism $\Phi_1:H_1(G)\to K_1(C^*_r(G))$ 
such that $\Phi_1(I(\gamma))$ is equal to the $K_1$-class of $\rho(\gamma)$ 
for $\gamma\in[[G]]$, 
where $\rho:[[G]]\to N(C(G^{(0)}),C^*_r(G))$ is the homomorphism 
described in Proposition \ref{exact} (3). 
\end{cor}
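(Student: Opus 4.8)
The plan is to define $\Phi_1$ by composing the right inverse $\rho:[[G]]\to N(C(G^{(0)}),C^*_r(G))$ from Proposition \ref{exact} (3) with the $K_1$-class map, and then show this descends through the index map. Concretely, for $\gamma\in[[G]]$ the unitary $\rho(\gamma)\in U(C^*_r(G))$ has a well-defined class $[\rho(\gamma)]_1\in K_1(C^*_r(G))$; the map $\gamma\mapsto[\rho(\gamma)]_1$ is a group homomorphism $[[G]]\to K_1(C^*_r(G))$ because $\rho$ is a homomorphism and $K_1$ is abelian with multiplication of unitaries corresponding to addition. By Theorem \ref{surjective} the index map $I:[[G]]\to H_1(G)$ is surjective, so to obtain $\Phi_1$ with $\Phi_1\circ I=(\gamma\mapsto[\rho(\gamma)]_1)$ it suffices to check that $\ker I$ is contained in the kernel of $\gamma\mapsto[\rho(\gamma)]_1$; surjectivity of $I$ then forces $\Phi_1$ to be well-defined and a homomorphism.

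The key step, therefore, is: if $\gamma\in\ker I$ then $[\rho(\gamma)]_1=0$ in $K_1(C^*_r(G))$. Here I would invoke Theorem \ref{injective}: any $\gamma\in\ker I$ factors as $\gamma=\gamma_1\gamma_2\gamma_3\gamma_4$ with $\gamma_1,\gamma_2^2,\gamma_3,\gamma_4$ elementary, and in particular each $\gamma_i$ is of finite order in $[[G]]$. So it is enough to show that $[\rho(\gamma_i)]_1=0$ whenever $\gamma_i$ has finite order, say $\gamma_i^n=\id$. The natural approach is to note that $\rho(\gamma_i)^n=\rho(\gamma_i^n)=\rho(\id)=1$, so $\rho(\gamma_i)$ is a unitary of finite order $n$; hence $n[\rho(\gamma_i)]_1=[\rho(\gamma_i)^n]_1=[1]_1=0$. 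This only gives that the class is torsion, not that it vanishes, so one more observation is needed. I would instead use Lemma \ref{elementary} (2): an elementary $\gamma_i=\tau_U$ lies in $[[K]]$ for some elementary subgroupoid $K\subset G$, and then $\rho(\gamma_i)=1_U$ can be taken to be a unitary in the sub-$C^*$-algebra $C^*_r(K)\subset C^*_r(G)$. Since $K$ is a compact principal \'etale groupoid on a totally disconnected space, $C^*_r(K)$ is a (unital) AF algebra, so $K_1(C^*_r(K))=0$; thus $[1_U]_1=0$ already in $K_1(C^*_r(K))$, and a fortiori its image in $K_1(C^*_r(G))$ under the inclusion-induced map is zero. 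For $\gamma_2$, which is only known to have $\gamma_2^2$ elementary, I would write $[\rho(\gamma_2)]_1$ and use $2[\rho(\gamma_2)]_1=[\rho(\gamma_2^2)]_1=0$ together with — hmm, this again only gives $2$-torsion. The cleaner route: $\gamma_2$ itself is of finite order (stated in Theorem \ref{injective}), so by Lemma \ref{elementary} (1) when $G$ is principal it is elementary; in general I would apply Lemma \ref{elementary} (2) directly to each of $\gamma_1,\gamma_3,\gamma_4$ and handle $\gamma_2$ by the same AF-algebra argument applied to $\gamma_2$ regarded as an element of finite order — noting that the proof of Lemma \ref{elementary} (2)'s "only if" direction only uses that a power of $\gamma$ is the identity and that fixed-point sets of powers are clopen, which holds for any finite-order element of $[[G]]$ since $[[G]]$ consists of homeomorphisms with this property. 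Thus each $\rho(\gamma_i)$ lies in some $C^*_r(K_i)$ with $K_i$ elementary, $K_1(C^*_r(K_i))=0$, and so $[\rho(\gamma_i)]_1=0$; summing, $[\rho(\gamma)]_1=\sum_i[\rho(\gamma_i)]_1=0$.

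Having established $\ker I\subset\ker(\gamma\mapsto[\rho(\gamma)]_1)$, the homomorphism $\Phi_1:H_1(G)\to K_1(C^*_r(G))$ is defined by $\Phi_1(I(\gamma))=[\rho(\gamma)]_1$; it is well-defined and additive because $I$ is a surjective homomorphism and $\gamma\mapsto[\rho(\gamma)]_1$ is a homomorphism vanishing on $\ker I$, and $\Phi_1(I(\gamma))=[\rho(\gamma)]_1$ is exactly the asserted formula. The only real subtlety — the step I expect to be the main obstacle — is the second paragraph's claim that finite-order elements of $[[G]]$ give unitaries sitting inside an AF subalgebra: one must be careful that the element $\gamma_2$ (with only $\gamma_2^2$ a priori elementary) is nonetheless of finite order and that Lemma \ref{elementary} (2) applies to it, i.e. that any finite-order element of $[[G]]$ is elementary. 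This follows because every element of $[[G]]$ is a homeomorphism of $G^{(0)}$ all of whose power-fixed-point sets are clopen (they are preimages of the diagonal under continuous maps into a space with clopen diagonal-complements, using the compact open $G$-set description), so a finite-order element of $[[G]]$ meets the definition of elementary, and then the structure argument goes through uniformly for all four factors.
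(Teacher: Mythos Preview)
Your overall strategy is correct and matches the paper's: use Theorem~\ref{surjective} for surjectivity of $I$, and Theorem~\ref{injective} to show that $\ker I$ maps to zero in $K_1$, so that $\gamma\mapsto[\rho(\gamma)]_1$ descends to $H_1(G)$.

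The gap is in your treatment of the finite-order factors. You correctly observe that from $\rho(\gamma_i)^n=1$ one only formally deduces $n[\rho(\gamma_i)]_1=0$, and you then try to upgrade this via an AF-embedding argument using Lemma~\ref{elementary}~(2). But that route requires each $\gamma_i$ to be \emph{elementary}, and your claim that every finite-order element of $[[G]]$ is automatically elementary is not correct when $G$ is only essentially principal. Concretely, for $\gamma=\tau_U$ the fixed-point set of $\gamma^k$ is $r(U^k\cap G')$; this is closed, but it is open only if $U^k\cap G'$ is open, which in turn requires $G'$ to be open in $G$. For essentially principal, non-principal $G$ the isotropy bundle $G'$ has interior exactly $G^{(0)}$, so $U^k\cap G'$ need not be open. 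Your ``clopen diagonal'' justification does not hold: the diagonal of $G^{(0)}\times G^{(0)}$ is closed but not open unless $G^{(0)}$ is discrete. In particular $\gamma_2$, for which only $\gamma_2^2$ is known to be elementary, need not itself be elementary, and the AF-embedding step cannot be carried out for it.

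The paper bypasses all of this with a one-line $K$-theory fact you overlooked: a unitary $u$ of finite order in any unital $C^*$-algebra has $[u]_1=0$, not merely torsion class. Indeed $u^n=1$ forces the spectrum of $u$ into the $n$-th roots of unity, a finite (hence proper) subset of $\T$; so $u$ has a continuous logarithm $u=\exp(ia)$ with $a=a^*$, and $t\mapsto\exp(ita)$ connects $u$ to $1$ in the unitary group. Since Theorem~\ref{injective} writes any $\gamma\in\ker I$ as a product of four finite-order elements, each $\rho(\gamma_i)$ is a finite-order unitary, hence trivial in $K_1$, and $[\rho(\gamma)]_1=\sum_i[\rho(\gamma_i)]_1=0$. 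With this replacement your proof is complete and coincides with the paper's.
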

\begin{proof}
When $I(\gamma)$ is zero, by Theorem \ref{injective}, 
$\gamma$ is a product of four homeomorphisms of finite order. 
If a unitary in a $C^*$-algebra is of finite order, then 
its $K_1$-class is zero. 
Therefore the $K_1$-class of $\rho(\gamma)$ is zero for $\gamma\in\ker I$. 
Since the index map $I:[[G]]\to H_1(G)$ is surjective 
by Theorem \ref{surjective}, 
we can define a homomorphism $\Phi_1:H_1(G)\to K_1(C^*_r(G))$ 
by letting $\Phi_1(I(\gamma))$ be the $K_1$-class of $\rho(\gamma)$. 
\end{proof}

The corollary above says that 
$H_1(G)$ corresponds to a subgroup of $K_1(C^*_r(G))$ 
generated by unitary normalizers of $C(G^{(0)})$. 
We do not know whether the homomorphism $\Phi_1$ is injective or not and 
whether the range of $\Phi_1$ is a direct summand of $K_1(C^*_r(G))$ or not. 

When $G$ is principal, 
combining Theorem \ref{surjective} and Theorem \ref{injective}, 
we obtain the following corollary. 

\begin{cor}\label{bijective}
Suppose that $G$ is almost finite and principal. 
Then the kernel of the index map is equal to $[[G]]_0$, 
and the quotient group $[[G]]/[[G]]_0$ is isomorphic to $H_1(G)$ 
via the index map. 
\end{cor}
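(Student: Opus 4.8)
The plan is to combine the two main theorems of Section 7 in the obvious way, using the principality assumption to identify "elementary" with "finite order." First I would recall that by Lemma \ref{elementary} (3) we always have the inclusion $[[G]]_0\subset\ker I$, so it remains to prove the reverse inclusion $\ker I\subset[[G]]_0$. This is where Theorem \ref{injective} enters: given $\gamma\in\ker I$, that theorem produces a factorization $\gamma=\gamma_1\gamma_2\gamma_3\gamma_4$ with $\gamma_1,\gamma_2^2,\gamma_3,\gamma_4$ elementary. Each of $\gamma_1,\gamma_3,\gamma_4$ is elementary, hence lies in $[[G]]_0$ by definition. The only point needing a word of justification is $\gamma_2$: we know $\gamma_2^2$ is elementary, and we must conclude $\gamma_2\in[[G]]_0$.

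The key observation is that when $G$ is principal, the notion of "elementary" simplifies. By Lemma \ref{elementary} (1), for principal $G$ an element $\gamma\in[[G]]$ is elementary if and only if it has finite order. Now $\gamma_2^2$ being elementary means $\gamma_2^2$ has finite order, say $(\gamma_2^2)^n=\id$; then $\gamma_2^{2n}=\id$, so $\gamma_2$ itself has finite order, hence $\gamma_2$ is elementary and lies in $[[G]]_0$. Therefore $\gamma=\gamma_1\gamma_2\gamma_3\gamma_4$ is a product of four elements of $[[G]]_0$, so $\gamma\in[[G]]_0$. This establishes $\ker I=[[G]]_0$.

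For the second assertion, I would invoke Theorem \ref{surjective}: since $G$ is almost finite, the index map $I:[[G]]\to H_1(G)$ is surjective, and it is a homomorphism by Lemma \ref{H1} (2). By the first part its kernel is exactly $[[G]]_0$, so the first isomorphism theorem gives $[[G]]/[[G]]_0\cong H_1(G)$ via the map induced by $I$.

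There is essentially no obstacle here beyond correctly citing the inputs; the one subtle spot is the passage from "$\gamma_2^2$ has finite order" to "$\gamma_2$ has finite order," which is immediate for group elements but does use principality (to get from "elementary" back to "finite order" for $\gamma_2^2$, and then to re-derive "elementary" for $\gamma_2$ via Lemma \ref{elementary} (1) again). All the heavy lifting — surjectivity of $I$ and the four-fold factorization of kernel elements — has already been done in Theorems \ref{surjective} and \ref{injective}.
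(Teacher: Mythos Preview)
Your proposal is correct and is precisely the argument the paper intends: the paper simply states that the corollary follows by combining Theorem \ref{surjective} and Theorem \ref{injective} (together with Lemma \ref{elementary}), and your write-up fills in exactly those details, including the one nontrivial point that principality (via Lemma \ref{elementary} (1)) is what guarantees $\gamma_2\in[[G]]_0$ from the fact that $\gamma_2^2$ is elementary.
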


\end{document}